\definecolor{pingreen}{rgb}{0,39,14}
\crefname{section}{§}{§§}
\Crefname{section}{§}{§§}
\def\dist{\mathrm{dist}}
\def\diam{\mathrm{diam}}
\newtheorem*{rep@theorem}{\rep@title}
\newcommand{\newreptheorem}[2]{%
\newenvironment{rep#1}[1]{%
 \def\rep@title{#2 \ref{##1}}%
 \begin{rep@theorem}}%
 {\end{rep@theorem}}}
\newtheorem{theorem}{Theorem}[section]
\newtheorem{lemma}[theorem]{Lemma}
\newtheorem{definition}[theorem]{Definition}
\newtheorem{remark}[theorem]{Remark}
\newtheorem{proposition}[theorem]{Proposition}
\newtheorem{corollary}[theorem]{Corollary}
\newcommand{\R}{\mathbb{R}}
\def\N{\mathbb N}
\def\eps{\varepsilon}
\def\eps{\varepsilon}
\def\d {\,\mathrm {d}}
\def\dt{\,\mathrm {d}t}
\def\supp{\mathrm{supp}}
\def\Id{\mathrm{Id}}
\def\div{\mathrm{div\,}}
\def\loc{\mathrm{loc}}
\numberwithin{equation}{section}
\author[1]{Stefano Bianchini\thanks{bianchin@sissa.it}}
\author[2]{Sara Daneri\thanks{sara.daneri@gssi.it}}
\affil[1]{Scuola Internazionale Superiore di Studi Avanzati, Trieste, Italy}
\affil[2]{Gran Sasso Science Institute, L'Aquila, Italy}
\title{On the sticky particle solutions to the multi-dimensional pressureless Euler equations}
\date{}
\begin{document}
	\maketitle

	\begin{abstract}
		In this paper we consider the multi-dimensional pressureless Euler system and we tackle the problem of existence and uniqueness of sticky particle solutions for general measure-type initial data. Although explicit counterexamples to both existence and uniqueness are known since \cite{BN}, the problem of whether one can still find sticky particle solutions for a large set of data and of how one can select them  was up to our knowledge still completely open. 
		
		In this paper we prove that for a comeager set of initial data in the weak topology the pressureless Euler system admits a unique sticky particle solution given by a free flow where trajectories are disjoint straight lines. 
		
		Indeed, such an existence and uniqueness result holds for a broader class of solutions decreasing their kinetic energy, which we call dissipative solutions, and which turns out to be the compact weak closure of the classical sticky particle solutions. Therefore any scheme for which the energy is l.s.c. and is dissipated  will converge, for a comeager set of data, to our solution, i.e. the free flow. 
	\end{abstract}
	
	\section{Introduction}
	We consider the pressureless Euler system in $[0,T]\times\R^d$
	\begin{equation}\label{eq:E}
	\left\{\begin{aligned}
	&\partial_t\rho+\div(\rho v)=0 \\
	&\partial_t(\rho v)+\div(\rho v\otimes v)=0,
\end{aligned}\right.
	\end{equation}
	where $\rho$ is the distribution of particles and $v$ is their velocity. 

Such a model has been proposed by Zeldovich \cite{Zel} as a simplified model for the early stages of the formation of galaxies, when a dust of particles moving without pressure should start to collide and aggregate into bigger and bigger clusters. 

 Since then, several authors devoted attention to the search of sticky particle solutions, namely solutions to \eqref{eq:E} which satisfy the following adhesion principle: if two particles of fluid do not interact, then they move freely keeping constant velocity, otherwise they join with velocity given by the balance of momentum. 

The great majority of the results in the literature (see e.g. \cite{BGSW, BG, CSW1, ERS, Gre, HW, NS, NT1}) are concerned with the one-dimensional pressureless dynamics.
In this case, exploiting the density of finite particle solutions,  one can obtain from quite general initial data a global measure  solution of \eqref{eq:E} satisfying a suitable entropy condition  (see \cite{ERS} and independently \cite{Gre}). For a different approach see also \cite{BJ, PR}.  Improvements of this results regarding uniqueness of solutions were given among others in \cite{BG, HW, NT1}.   In \cite{BG} an equivalent formulation of \eqref{eq:E} is given, proving that the cumulative distribution function of the particle density is the entropy solution of a scalar conservation law.  In \cite{NT1} it is shown that the  velocity field of such solution satisfies the Oleinik condition.  In \cite{HW} uniqueness of solutions for Radon measure initial data is shown.   In \cite{NS} the authors give an alternative characterization of the evolution of \eqref{eq:E} observing that the monotone rearrangement $X_t$ of $\rho_t$ satisfies $X_t=P_{K}(X_0+tv_0)$ and $v_t=\dot X_t$, where $P_K$ is the projection operator on the cone $K$ of monotone maps. This allows the authors to investigate finer properties of solutions, and in particular their connections with gradient flows in Wasserstein spaces. A different more direct proof of the equivalence of the formulation introduced in \cite{NS} has been given in \cite{CSW1}. These approaches show that the velocity of particles is uniquely determined and the sticky particle condition is satisfied. See also \cite{Bo,Sob} for viscous approximations of \eqref{eq:E} and \cite{NT2} for a study of \eqref{eq:E} with an additional viscosity.

In general dimension, much less is known. For initial data given by a finite number of particle pointing each in a given direction, it is easy to show that a global sticky particle solution always exists and is unique.
However, in dimension $d\geq2$, one sees immediately already from a finite number of particles that the sticky particle solutions do not depend continuously on the initial data.

In \cite{BN} it is shown that, in general, both existence and uniqueness might fail: it is indeed possible to build initial data of non-existence or non-uniqueness for the sticky particle solutions, in contrast to what had been erroneously stated in \cite{Sev}. In particular, in dimension $d\geq2$ one cannot hope for a well-posedness of the Cauchy problem in the set of sticky particle solutions for all measure-type initial data as in the one-dimensional case. 

In \cite{CSW2} measure valued solutions to \eqref{eq:E} on a compactification of the state space   have been constructed for general initial data as limits of variational in time discretizations. Such solutions dissipate the total kinetic energy, and the approximating maps are constructed in the spirit of the adhesive dynamics, but no sticky particle property for the limit measure valued solutions is given.  

Thus the natural question of whether one can still find particle solutions for a large class of data (hence excluding the counterexamples in \cite{BN}) remained unanswered. In this paper we give a positive answer to this question.

In order to state our main result, define 
	\[
\mathcal P_{2,1}(\R^d\times\R^d):=\Bigl\{\nu_0\in\mathcal P(\R^d\times\R^d):\,\int|x|^2{\mathtt{p}_x}_{\#}\nu_0\leq1,\,\int|v|^2{\mathtt{p}_v}_{\#}\nu_0\leq1\Bigr\},
\]
where $\mathcal P(\R^d\times\R^d)$ are the probability measures on $\R^d\times\R^d$, $(x,v)\in\R^d\times\R^d$ the position-velocity coordinates and $\mathtt {p}_x$ $(\mathtt {p}_v)$ the projection operators on the first (last) $d$ coordinates. Moreover, we consider the problem of existence and uniqueness in a larger class of solutions which we call \emph{dissipative} since in particular their kinetic energy is decreasing but their trajectories might cross without joining at later times. By \emph{free flow} we mean a flow in which trajectories are disjoint straight lines which never intersect.  

Our main result is the following:

\begin{theorem}\label{thm:freeflow0}

	There is a set $D_0\subset\mathcal P_{2,1}(\R^d\times\R^d)$ such that, for any $\nu_0\in D_0$ there exists a unique dissipative  solution $\eta$ with initial data $\nu_0$ and it is given by a free flow. Such a set is a dense $G_{\delta}$ set (i.e. of second category)  in the weak topology on $\mathcal P_{2,1}(\R^d\times\R^d)$.
\end{theorem}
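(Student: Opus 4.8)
The plan is to argue by Baire category, resting on the principle—implicit in the counterexamples of \cite{BN}—that the loss of uniqueness is caused by \emph{genuine collisions}, i.e.\ two particles occupying the same point of $\R^d$ at the same time, and that in dimension $d\ge2$ collisions are destroyed by arbitrarily small perturbations. For $(x,v),(x',v')\in\R^d\times\R^d$ I set
\[
\gamma\bigl((x,v),(x',v')\bigr):=\min_{t\in[0,T]}\bigl|(x-x')+t(v-v')\bigr|,
\]
the minimal spatial distance on $[0,T]$ between the free trajectories $t\mapsto x+tv$ and $t\mapsto x'+tv'$; this is a continuous function on $(\R^d\times\R^d)^2$. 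Two distinct particles meet on $[0,T]$ precisely when $\gamma=0$, and then necessarily $v\neq v'$. Accordingly I would put, for $n\in\N$,
\[
S_n:=\bigl\{(p,p'):\ \gamma(p,p')=0,\ |v-v'|\ge\tfrac1n\bigr\},\qquad A_n:=\bigl\{\nu_0:\ (\nu_0\otimes\nu_0)(S_n)<\tfrac1n\bigr\},
\]
and define $D_0:=\bigcap_n A_n$.

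Each $S_n$ is closed, and since $\nu_0\mapsto\nu_0\otimes\nu_0$ is weakly continuous, the portmanteau theorem makes $\nu_0\mapsto(\nu_0\otimes\nu_0)(S_n)$ upper semicontinuous on $\mathcal P_{2,1}$ with the weak topology; hence each $A_n$ is open and $D_0$ is a $G_\delta$. The sets $S_n$ increase to the collision set $S_\infty=\{\gamma=0,\ v\neq v'\}$, and on $D_0$ one has $(\nu_0\otimes\nu_0)(S_n)<1/n$ for all $n$; as $(\nu_0\otimes\nu_0)(S_n)$ is nondecreasing in $n$ while $1/n\downarrow0$, this forces $(\nu_0\otimes\nu_0)(S_\infty)=0$. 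Thus for $\nu_0\in D_0$ the free trajectories are pairwise disjoint for $\nu_0\otimes\nu_0$-a.e.\ pair, so the free flow $\eta$, obtained by transporting $\nu_0$ along the straight lines $t\mapsto x+tv$, is well defined and collision-free.

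The crux is the density of each $A_n$. Given $\nu_0$ and $\eps>0$ I would first replace $\nu_0$ by a finitely atomic $\mu=\sum_i a_i\delta_{(x_i,v_i)}$ within $\eps/2$ in the weak metric (atomic measures are dense and the moment bounds survive up to $o(1)$). It then suffices to move the finitely many atoms by less than $\eps/2$ so that $\gamma(p_i,p_j)>0$ for all $i\neq j$, since this gives $(\mu'\otimes\mu')(S_n)=0$. For a colliding pair one shifts $x_i$ (or $v_i$) transversally to the relative direction $v_i-v_j$: because $d\ge2$ a free transversal direction exists, and an arbitrarily small transversal shift turns the exact hit $\gamma=0$ into a strictly positive miss. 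As there are finitely many pairs and the exact-collision locus is, for each pair, a set of codimension $d-1\ge1$ in the atom parameters, one generic perturbation of size $<\eps/2$ removes all collisions at once. Hence $\mu'\in A_n$ lies within $\eps$ of $\nu_0$, and Baire's theorem yields that $D_0=\bigcap_n A_n$ is a dense $G_\delta$. (For $d=1$ this codimension is $0$, consistent with the one-dimensional theory being genuinely different.)

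Finally I would prove that for $\nu_0\in D_0$ the free flow is the unique dissipative solution. Existence is immediate from the density step: choosing collision-free atomic $\nu_0^k\to\nu_0$, their classical sticky particle solutions are exactly the corresponding free flows, which converge weakly to the free flow of $\nu_0$; the latter therefore lies in the compact weak closure of sticky solutions, hence is dissipative. For uniqueness, let $\eta'$ be any dissipative solution, the weak limit of sticky solutions of some $\nu_0^k\to\nu_0$. A sticky solution differs from the free flow only through mass that actually collides, and upper semicontinuity along the closed sets $S_n$ gives $\limsup_k(\nu_0^k\otimes\nu_0^k)(S_n)\le(\nu_0\otimes\nu_0)(S_n)<1/n$; letting $k\to\infty$ and then $n\to\infty$ shows that the sticking mass vanishes, so $\eta'$ is the free flow. \textbf{The main obstacle} is precisely this last passage: converting the measure-theoretic smallness of the collision set into a quantitative bound on how far a sticky solution of $\nu_0^k$ can drift from the free flow, uniformly as the separation threshold $1/n\to0$. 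This needs a stability estimate for the adhesion dynamics—controlling the velocity, hence trajectory, change induced by sticking in terms of the mass and separation of the colliding pairs—and is where the finer structure of dissipative solutions and the energy dissipation must be used in earnest.
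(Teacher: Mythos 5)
Your construction of $D_0$ is genuinely different from the paper's, and unfortunately the difference is fatal: you define $D_0$ by a static, pairwise condition on the initial datum (the free trajectories of $\nu_0\otimes\nu_0$-a.e.\ pair of particles do not meet on $[0,T]$), whereas the paper defines $D_0$ dynamically, as the set of $\nu_0$ such that \emph{every} dissipative solution issuing from $\nu_0$ has zero total dissipation (see \eqref{eq:D_0}). The set you obtain is strictly too large. Example 3 of Bressan--Nguyen \cite{BN}, discussed in Remark \ref{Rem:converse_not_intersecting_not_true}, produces a countable atomic datum whose free trajectories are \emph{pairwise disjoint} --- so that $(\nu_0\otimes\nu_0)(S_n)=0<1/n$ for every $n$ and the datum belongs to your $D_0$ --- and which nevertheless admits a strictly dissipating solution: an infinite chain of particles in which the $i$-th collides with the $(i+1)$-th only because the $(i+1)$-th has already collided with the $(i+2)$-th, with no first collision triggering the cascade. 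This is exactly why the step you flag as ``the main obstacle'' cannot be closed: the bound $\limsup_k(\nu_0^k\otimes\nu_0^k)(S_n)\leq(\nu_0\otimes\nu_0)(S_n)$ only controls exact collisions of the \emph{free} trajectories of the approximants, while in the sticky dynamics of $\nu_0^k$ a near-miss can be closed up by an arbitrarily small perturbation and the resulting velocity change can trigger further collisions of mass that your pairwise collision set never sees. No pairwise condition on $\nu_0$ alone can rule this out.

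The paper's route avoids the problem by making the absence of dissipation of \emph{all} competitors the defining property: Lemma \ref{lemma:straight} shows that $D(\eta)=0$ forces $\eta$ onto straight lines, and Lemma \ref{lemma:nodiss} shows that if in addition no dissipative solution from $\nu_0$ dissipates, then those lines must be pairwise disjoint, because any crossing carrying positive mass (detected through an admissible transference plan and Kellerer duality) could be used to build a merged, strictly dissipating competitor. The $G_\delta$ structure then comes from writing $D_0=\bigcap_k D_{1/k}$ with $D_{1/k}=\{\nu_0:\ D(\eta)<1/k\ \forall\,\eta\}$ and proving each $D_{1/k}$ open via the compactness of the set of dissipative solutions (Theorem \ref{thm:sticky}) together with the upper semicontinuity of $D$ --- two ingredients entirely absent from your argument. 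Your density step (finite atomic approximation plus a codimension-$(d-1)$ perturbation of positions and velocities) is essentially the one the paper uses, via Theorem \ref{thm:stickydense}, and is fine; it is the definition of $D_0$ and the uniqueness proof that must be replaced.
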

Since our notion of dissipative solution includes the classical sticky particle solutions, the above theorem implies that, even though the sticky particle solutions are not well-posed for every measure-type initial data, there exists a comeager set of initial data in the weak topology giving rise to a unique sticky particle solution. Moreover, for any of these initial data the sticky particle  solution is unique also in the larger class of dissipative solutions (where trajectories are allowed to cross) and is given 
by a trivial free flow concentrated on trajectories which do not intersect. In particular for such initial data there is only one dissipative solution and its dissipation is equal to zero. Thus, for a comeager set of initial data the problem of finding sticky particle solutions is well-posed, but the dynamics that one  sees is trivial.

Both the concepts of dissipative and classical sticky particle solutions are defined at a Lagrangian level as measures on the space of curves with finite energy. The class of dissipative solutions turns out to be the compact weak closure of the set of classical sticky particle solutions (see Theorem  \ref{thm:stickydense}).In Section \ref{sec:diss} we introduce the concept of dissipative solution and we show that this class is compact and includes the classical sticky particle solutions.  
Then in Section \ref{sec:approx} we show that dissipative solutions can be approximated in the weak topology by classical sticky particle solutions. In Section \ref{S:PDE_formul} we give a kinetic and PDE formulation of our notion of dissipative solution. Given the approximation result of Section \ref{sec:approx}, in Section \ref{sec:gdelta} we use the fact that the fact that the dimension is greater than or equal to $2$ to modify the initial data of such finite particle solutions in order to have the trivial free flow as unique solution, while staying close in the weak topology. Such data in particular have the property that every dissipating solution starting from them has zero dissipation. The fact that such initial data are a $G_\delta$ set follows from the compactness of the set of dissipative solutions and the upper semicontinuity of the dissipation.  \\
The construction of this dense $G_\delta$-set relies on some natural assumptions on the approximation scheme, in particular that the energy is l.s.c. and that if the dissipation of energy is $0$ then the only solution is the free flow, see Remark \ref{Rem:other_G_delta} for details. Hence one concludes that for a dense $G_\delta$-set of initial data the weak solutions constructed by any reasonable approximation scheme coincide with our dissipative solutions, i.e. the free flow. 


	\section{Preliminaries} 
	
	We define the following space of curves. For a fixed $T>0$,
	\begin{equation*}
	\Gamma := \Big\{ \gamma \in L^2 \big( (-1,T), \R^d \big):\, \gamma \llcorner_{(-1,0)} \ \text{affine} \Big\}.
	\end{equation*}
	
	\begin{remark}
	\label{Rem:other_possible_spaces}
	The above choice is in order to avoid assigning the initial speed $W_0(\gamma)$ and considering the space ${L^2}((0,T);\R^d)) \times \R^d$ with the product topology. The solutions we consider will actually be in $W^{1,2}((-1,T),\R^d))$ (see Lemma \ref{Lem:calM_right} below): of course the map
	\begin{equation*}
	\Gamma \ni \gamma \mapsto \big( \gamma \llcorner_{(0,T)},\gamma(0) - \gamma(-1) \big) \in W^{1,2}((0,T);\R^d) \times \R^d
	\end{equation*}
	is a bijection, with inverse
	\begin{equation*}
	W^{1,2}((0,T);\R^d) \times \R^d \ni (\gamma,W_0) \mapsto \begin{cases}
	\lim_{s \searrow 0} \gamma(s) - t W_0 & t \in (-1,0], \\
	\gamma(t) & 0 < t < T.
	\end{cases}
	\end{equation*}
	Actually these maps are bounded linear operators {when we consider the weak or strong topology of $W^{1,2}$}.
	\end{remark}
	
	
	For every $\gamma\in\Gamma$, we define the initial velocity field as
	
	\begin{equation}\label{eq:v0}
	W_0(\gamma) = \gamma(0) - \gamma(-1).
	\end{equation}
	
	This function is continuous in the topology of $\Gamma$.
	
	We denote by $\mathcal P(\Gamma)$ the set of  Borel probability measures on $\Gamma$.
	
	On $\mathcal P(\Gamma)$ we consider the topology induced by the narrow convergence, namely $\eta^n\rightharpoonup\eta$ in $\mathcal P(\Gamma)$ if $\int\phi(\gamma)\eta^n(d\gamma)\to\int\phi(\gamma)\eta(d\gamma)$ for any bounded $\phi\in C^0(\Gamma)$. 
	
    We define the following closed subset of $\mathcal P(\Gamma)$
	\begin{equation*}
	\mathcal M (\Gamma) = \bigg\{ \eta \in \mathcal P(\Gamma) : \int |\gamma(0)|^2 \eta(d\gamma)\leq 1,\, \int \|\dot \gamma\|_{L^2(-1,T)}^2 \eta(d\gamma)  \leq 1 \bigg\}. 
	\end{equation*}
	{Note that $\gamma(0)$ is defined because $\gamma \in W^{1,2}$ $\eta$-a.e.}
	
	The set $\mathcal M(\Gamma)$ satisfies the following properties.
	\begin{lemma}
		\label{Lem:calM_right}
		The set $\mathcal M(\Gamma)$ is tight. 
	\end{lemma}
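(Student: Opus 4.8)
The plan is to verify Prokhorov-type tightness directly: for each $\eps>0$ I will exhibit a single compact set $K_\eps\subset\Gamma$ with $\eta(\Gamma\setminus K_\eps)\leq\eps$ for \emph{every} $\eta\in\mathcal M(\Gamma)$ simultaneously. The two defining constraints of $\mathcal M(\Gamma)$ are uniform second-moment bounds, so the natural candidates for these compact sets are (closures of) joint sublevel sets of the two integrands $|\gamma(0)|^2$ and $\|\dot\gamma\|_{L^2(-1,T)}^2$, and the only genuine analytic input is that a set bounded in $W^{1,2}$ is relatively compact in the $L^2$ topology of $\Gamma$.

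First I would record the elementary compactness facts about $\Gamma$. Since ``affine on $(-1,0)$'' is a closed linear constraint, $\Gamma$ is a closed (Hilbert) subspace of $L^2((-1,T),\R^d)$, so the $L^2$-closure of any $A\subset\Gamma$ stays inside $\Gamma$. For $\gamma\in W^{1,2}$ the affine piece has constant derivative $W_0(\gamma)=\gamma(0)-\gamma(-1)$, whence $|W_0(\gamma)|^2=\|\dot\gamma\|_{L^2(-1,0)}^2\leq\|\dot\gamma\|_{L^2(-1,T)}^2$; combined with $\gamma(t)=\gamma(0)+\int_0^t\dot\gamma$ this gives a pointwise bound $\|\gamma\|_{L^\infty}\les|\gamma(0)|+\|\dot\gamma\|_{L^2(-1,T)}$ and the Hölder estimate $|\gamma(t)-\gamma(s)|\leq\|\dot\gamma\|_{L^2}|t-s|^{1/2}$. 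Hence the set
\[
A_R:=\bigl\{\gamma\in\Gamma:\ |\gamma(0)|\leq R,\ \|\dot\gamma\|_{L^2(-1,T)}\leq R\bigr\}
\]
is uniformly bounded and equicontinuous, so by Ascoli--Arzelà (equivalently the compact one-dimensional embedding $W^{1,2}\hookrightarrow\hookrightarrow C^0\hookrightarrow L^2$ on the bounded interval $(-1,T)$) its $L^2$-closure $K_R:=\overline{A_R}^{\,L^2}$ is compact and contained in $\Gamma$.

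Next I would push the mass into $A_R$ uniformly. The functional $\gamma\mapsto\gamma(0)$ is continuous on $\Gamma$ (on $(-1,0)$ the map $\gamma\mapsto(\gamma(0),W_0(\gamma))$ is a linear isomorphism onto the finite-dimensional space of affine coefficients, and $L^2$ convergence of affine functions forces convergence of the coefficients), while $\gamma\mapsto\|\dot\gamma\|_{L^2(-1,T)}$, extended by $+\infty$ off $W^{1,2}$, is $L^2$-lower semicontinuous (a strongly $L^2$-convergent sequence with uniformly bounded derivatives has $\dot\gamma_n\rightharpoonup\dot\gamma$ weakly in $L^2$); both are therefore Borel, and the finiteness of $\int\|\dot\gamma\|_{L^2}^2\,d\eta$ is exactly what forces $\gamma\in W^{1,2}$ for $\eta$-a.e. $\gamma$. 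Chebyshev's inequality then yields, for every $\eta\in\mathcal M(\Gamma)$,
\[
\eta(\Gamma\setminus A_R)\leq\eta\bigl(|\gamma(0)|>R\bigr)+\eta\bigl(\|\dot\gamma\|_{L^2}>R\bigr)\leq\frac{1}{R^2}\int|\gamma(0)|^2\,d\eta+\frac{1}{R^2}\int\|\dot\gamma\|_{L^2}^2\,d\eta\leq\frac{2}{R^2}.
\]
Since $A_R\subset K_R$, we obtain $\eta(\Gamma\setminus K_R)\leq 2/R^2$ uniformly in $\eta$, and choosing $R\geq\sqrt{2/\eps}$ gives the desired $K_\eps=K_R$.

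The genuinely non-trivial step is the one-dimensional Rellich/Ascoli compactness that upgrades the $W^{1,2}$-bound into $L^2$-relative-compactness; everything else is bookkeeping. The only points requiring care are that the topology on $\Gamma$ is the strong $L^2$ topology (so that this compactness is available and $\mathcal P(\Gamma)$ is a space of Borel measures on a Polish space) and the Borel measurability of the two integrands, which, as noted, follow respectively from continuity of $\gamma\mapsto\gamma(0)$ and lower semicontinuity of $\gamma\mapsto\|\dot\gamma\|_{L^2}$.
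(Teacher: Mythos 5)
Your proof is correct and follows essentially the same route as the paper: sublevel sets $\{|\gamma(0)|\leq R,\ \|\dot\gamma\|_{L^2}\leq R\}$ are compact in $\Gamma$ by the compact embedding $W^{1,2}\hookrightarrow L^2$, and Chebyshev's inequality bounds the mass of their complements uniformly over $\mathcal M(\Gamma)$. You merely spell out the compactness and measurability details that the paper asserts in one line.
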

	
	\begin{proof}
		For each $n\in\N$, the set
		\begin{equation*}
		\Gamma(n):=\Big\{ \gamma : |\gamma(0)|^2\leq n, \|\dot \gamma\|^2_{L^2(-1,T)} \leq n \Big\}
		\end{equation*}
		is bounded and therefore compact in $(\Gamma,d_{\Gamma})$. {In particular the set of bounded measures supported on $\Gamma(n)$ is compact in the narrow topology}.
		
		Now notice that $\forall\, \eta \in \mathcal M(\Gamma)$ by Chebyshev's inequality
		\begin{equation*}
		\max\bigg\{\eta \bigg( \bigg\{ \gamma : |\gamma(0)|^2>n\bigg\}\bigg),\, \eta \bigg( \bigg\{ \gamma :\int_{-1}^T |\dot \gamma(t)|^2 dt > n \bigg\} \bigg)\bigg\} < \frac{1}{n},
		\end{equation*}
		so that $\mathcal M(\Gamma)$ is tight.
%
	\end{proof}

	\begin{corollary}
		\label{Cor:calM_compact_metric}
		The space $\mathcal M(\Gamma)$ with the topology of $\mathcal P(\Gamma)$  is compact metrizable.
	\end{corollary}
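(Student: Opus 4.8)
The plan is to read off compactness from Prokhorov's theorem together with the tightness already proved in Lemma~\ref{Lem:calM_right}, and to read off metrizability from the fact that the narrow topology on the probability measures over a separable metric space is itself metrizable. Concretely, once one knows that the ambient curve space $\Gamma$ is Polish, the family $\mathcal M(\Gamma)$ — being tight — is relatively compact in the narrow topology; since it is moreover closed, it is compact; and as a subset of the metrizable space $\mathcal P(\Gamma)$ it is compact metrizable. So the whole statement reduces to checking the hypotheses under which these two standard tools apply.

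First I would verify that $(\Gamma,d_\Gamma)$ is Polish. The requirement that $\gamma\llcorner_{(-1,0)}$ be affine is a \emph{closed linear} condition: the $\R^d$-valued affine maps on $(-1,0)$ form a finite-dimensional, hence $L^2$-closed, subspace, and this condition is stable under $L^2$-limits. Thus $\Gamma$ is a closed linear subspace of the separable Hilbert space $L^2((-1,T),\R^d)$, hence itself separable and complete, i.e. Polish. Two classical facts then become available: (i) by Prokhorov's theorem a tight family of Borel probability measures on $\Gamma$ is relatively compact for the narrow topology; and (ii) since $\Gamma$ is separable metric, $\mathcal P(\Gamma)$ with the narrow topology is metrizable, for instance by the L\'evy--Prokhorov or a bounded-Lipschitz distance.

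It then remains to assemble these. By Lemma~\ref{Lem:calM_right} the set $\mathcal M(\Gamma)$ is tight, so by (i) it is relatively compact in $\mathcal P(\Gamma)$. To upgrade this to compactness I use that $\mathcal M(\Gamma)$ is closed: its two defining constraints have the form $\int f\,d\eta\le 1$ with $f\ge 0$ equal to one of the nonnegative functionals $\gamma\mapsto|\gamma(0)|^2$ and $\gamma\mapsto\|\dot\gamma\|_{L^2(-1,T)}^2$, both lower semicontinuous on $\Gamma$ (the Sobolev seminorm is $L^2$-lower semicontinuous, and on $\Gamma$ the evaluation $\gamma\mapsto\gamma(0)$ is continuous since on the affine piece it is a bounded linear functional). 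Because for a nonnegative l.s.c. integrand $\eta\mapsto\int f\,d\eta$ is lower semicontinuous along narrow limits (Portmanteau), the sublevel conditions pass to narrow limits, so $\mathcal M(\Gamma)$ is closed. A closed subset of a relatively compact set is compact, giving compactness of $\mathcal M(\Gamma)$; and a subset of the metrizable space $\mathcal P(\Gamma)$ is metrizable, so $\mathcal M(\Gamma)$ is compact metrizable.

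Everything here is soft once the inputs are in place, so I do not expect a genuine obstacle in the analytic sense; the only points that require care — and which I would want to state explicitly rather than take for granted — are precisely the hypotheses feeding Prokhorov's theorem and the metrizability result, namely that $\Gamma$ is Polish (so that tightness yields \emph{relative compactness} and $\mathcal P(\Gamma)$ is metrizable) and that $\mathcal M(\Gamma)$ is closed (via lower semicontinuity of the energy and moment functionals under narrow convergence). These are exactly the two verifications carried out above.
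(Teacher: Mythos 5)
Your argument is correct, and it reaches the conclusion by a slightly different (more ``soft'') route than the paper. The paper's proof is purely about exhibiting a metric: it takes the compact sets $\Gamma(n)$ from the tightness proof of Lemma \ref{Lem:calM_right}, metrizes narrow convergence of the restricted measures $\eta\llcorner_{\Gamma(n)}$ by the L\'evy--Prokhorov distances $\hat d_n$, and declares $d_{\mathcal M(\Gamma)}=\sum_n 2^{-n}\hat d_n$; compactness is then implicit in tightness together with the closedness of $\mathcal M(\Gamma)$, which the paper simply asserts when the set is defined. You instead invoke the two general facts that $\mathcal P(X)$ is metrizable for $X$ separable metric and that tightness gives relative compactness via Prokhorov, after checking that $\Gamma$ is Polish (a closed linear subspace of $L^2((-1,T),\R^d)$, since the affine constraint on $(-1,0)$ cuts out a finite-dimensional, hence closed, subspace), and you supply the closedness verification the paper omits: lower semicontinuity of $\eta\mapsto\int f\,d\eta$ for the nonnegative l.s.c.\ integrands $|\gamma(0)|^2$ and $\|\dot\gamma\|_{L^2}^2$. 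What the paper's construction buys is an \emph{explicit} distance $d_{\mathcal M(\Gamma)}$ adapted to the exhaustion by compacta, which is reused later (e.g.\ in the continuity of $\eta\mapsto (T_t)_\sharp\eta$); what your version buys is a complete, self-contained justification of both compactness and closedness from standard theorems, including the one point (closedness of $\mathcal M(\Gamma)$ under narrow limits) that the paper leaves to the reader. Both arguments rest on the same tightness lemma, and neither has a gap.
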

	
	\begin{proof}
		For the restrictions of the measures in $\mathcal M(\Gamma)$ to the compact sets $(\Gamma(n), d_{\Gamma})$, $n\in\N$, metrize the narrow convergence with the usual L\`evy Prokorhov distance $\hat d_n$. Then the distance metrizing the narrow convergence between two measures in $\mathcal M(\Gamma)$ is defined by
		\[
		d_{\mathcal M(\Gamma)}(\eta_1,\eta_2)=\sum_n2^{-n}\hat d_n(\eta_1\llcorner_{\Gamma(n)},\eta_2\llcorner_{\Gamma(n)}). \qedhere
		\]
	\end{proof}

Being concentrated on $W^{1,2}((-1,T),\R^d)$, the measures $\eta \in \mathcal M(\Gamma)$ satisfy the following property.

\begin{lemma}
\label{Lem:conti_weak_cont}
Let $\phi : W^{1,2}((-1,T),\R^d) \to \R$ is bounded and continuous w.r.t. the weak topology of $W^{1,2}((-1,T),\R^d)$, i.e. the $L^2$-topology on $\gamma$ and the weak topology on $\dot \gamma$. Then if $\eta_n \rightharpoonup \eta$ narrowly, then
\begin{equation*}
\int \phi(\gamma) \eta_n(d\gamma) \to \int \phi(\gamma) \eta(d\gamma).
\end{equation*}
\end{lemma}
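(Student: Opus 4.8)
The plan is to view the difficulty as a purely topological one: the hypothesis gives $\phi$ continuous only for the weak $W^{1,2}$-topology (the $L^2$-topology on $\gamma$ \emph{together with} the weak $L^2$-topology on $\dot\gamma$), which is strictly finer than the topology $d_\Gamma$ of $\Gamma$. Hence $\phi$ is a priori \emph{not} a legitimate test function for the narrow convergence $\eta_n\rightharpoonup\eta$, and the definition cannot be applied directly. I would circumvent this by exploiting the uniform tightness of $\mathcal M(\Gamma)$ to truncate onto the $W^{1,2}$-bounded sublevel sets $\Gamma(m)=\{\gamma:|\gamma(0)|^2\le m,\ \|\dot\gamma\|_{L^2(-1,T)}^2\le m\}$ of Lemma~\ref{Lem:calM_right}, and by showing that on each such bounded set the obstructing finer topology collapses onto $d_\Gamma$.

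The crux — and the step I expect to be the main obstacle — is precisely this collapse: \emph{on each $\Gamma(m)$ the $L^2$-topology and the weak $W^{1,2}$-topology coincide.} To see it, suppose $\gamma_n\to\gamma$ in $L^2$ with $\gamma_n\in\Gamma(m)$, so that $\sup_n\|\dot\gamma_n\|_{L^2}\le\sqrt m$. Then $\{\dot\gamma_n\}$ is bounded in the reflexive space $L^2$, and any weak limit $g$ of a subsequence satisfies, for every smooth compactly supported test $\psi$, $\int g\,\psi\dt=\lim_k\int \dot\gamma_{n_k}\,\psi\dt=-\lim_k\int\gamma_{n_k}\,\dot\psi\dt=-\int\gamma\,\dot\psi\dt$; thus $g=\dot\gamma$ is uniquely determined, whence the whole sequence obeys $\dot\gamma_n\rightharpoonup\dot\gamma$. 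So $L^2$-convergence on $\Gamma(m)$ automatically upgrades to weak $W^{1,2}$-convergence (the converse being trivial). Consequently $\phi|_{\Gamma(m)}$, being weak $W^{1,2}$-continuous, is continuous for $d_\Gamma$ on the compact metric space $(\Gamma(m),d_\Gamma)$ — compactness being exactly the content of Lemma~\ref{Lem:calM_right}.

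With this in hand the remainder is routine truncation bookkeeping. By the Tietze–Urysohn theorem I would extend $\phi|_{\Gamma(m)}$ to a bounded $d_\Gamma$-continuous function $\phi_m\in C^0(\Gamma)$ with $\|\phi_m\|_\infty\le\|\phi\|_\infty$. Since $\phi_m$ is now an honest narrow test function, the definition of narrow convergence yields $\int\phi_m\,\eta_n(d\gamma)\to\int\phi_m\,\eta(d\gamma)$. To control the error I use Chebyshev's inequality exactly as in Lemma~\ref{Lem:calM_right}: for every $n$ one has $\eta_n(\Gamma\setminus\Gamma(m))\le 2/m$, and likewise for $\eta$. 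Since $\phi=\phi_m$ on $\Gamma(m)$ and both are bounded by $\|\phi\|_\infty$, this gives $\bigl|\int(\phi-\phi_m)\,\eta_n(d\gamma)\bigr|\le 2\|\phi\|_\infty\,\eta_n(\Gamma\setminus\Gamma(m))\le 4\|\phi\|_\infty/m$ uniformly in $n$, and the same bound for $\eta$.

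Combining the three estimates through the triangle inequality gives $\limsup_n\bigl|\int\phi\,\eta_n(d\gamma)-\int\phi\,\eta(d\gamma)\bigr|\le 8\|\phi\|_\infty/m$, and letting $m\to\infty$ closes the argument. (Measurability and integrability of $\phi$ are not an issue: being bounded and equal, on each closed $\Gamma(m)$, to a $d_\Gamma$-continuous function, $\phi$ is Borel, and the sets $\Gamma(m)$ exhaust the measures up to a null set since $\eta_n(\Gamma\setminus\Gamma(m))\to0$.) Thus once the topological collapse on bounded $W^{1,2}$-sets is established, the uniform tightness of $\mathcal M(\Gamma)$ does all the remaining work.
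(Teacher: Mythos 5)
Your proof is correct and follows essentially the same route as the paper's: restrict to the sets $\Gamma(m)$ where the $L^2$- and weak $W^{1,2}$-topologies coincide, extend $\phi\llcorner_{\Gamma(m)}$ to an $L^2$-continuous function with the same bound, apply narrow convergence to the extension, and absorb the tails by the uniform tightness of $\mathcal M(\Gamma)$. The only difference is that you justify the coincidence of topologies by an explicit weak-compactness/uniqueness-of-limit argument where the paper simply invokes compactness of $\Gamma(m)$ in both topologies, and you spell out the Chebyshev bookkeeping that the paper leaves implicit.
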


\begin{proof}
Since $\Gamma(m)$ is compact in both topologies, it follows that the $L^2$-metric and the metrization of the weak topology of $W^{1,2}((-1,T),\R^d)$ are equivalent. Let $\tilde \phi_m(\gamma)$ be an $L^2$-continuous extension of $\phi \llcorner_{\Gamma(m)}$ to $L^2$, with the same bound as $\phi$. Hence
\begin{equation*}
\int \tilde \phi_m \eta_n \to \int \tilde \phi_m \eta,
\end{equation*}
and by tightness of $\mathcal M(\Gamma)$ we conclude.
\end{proof}

	\section{Dissipative  solutions}\label{sec:diss}
	
	In this section we first give a definition of dissipative solutions to the system \eqref{eq:E} in the Lagrangian formulation, namely as a subset of $\mathcal M(\Gamma)$. Such solutions dissipate the total kinetic energy, but trajectories are allowed to intersect at a certain time without joining in the future. The so-called sticky particle (or adhesive) solutions constitute a subset of dissipative solutions whose trajectories, whenever intersecting at some time, must coincide for all subsequent times.

	\subsection{Definition of dissipative solution}
	\label{Ss:definit_sticky}
	
	For any $t\in(0,T)$, define the space of curves
	\[
	\Gamma_t := {L^{2}} \big( (t,T), \R^d \big).
	\]
	
	On $\Gamma_t$ we put the {$L^2$-metric}. 

	Let $T_t:\Gamma\to \Gamma_t$ be the restriction map
	
	\[
	T_t(\gamma)=\gamma\llcorner_{(t,T)}.
	\]
	
	{This map is a contraction.}
	The map $T_t$ induces the following equivalence relation on $\Gamma$
	\[
	\gamma\sim_t\gamma'\quad {\Longleftrightarrow} \quad T_t(\gamma)=T_t(\gamma'). 
	\]
     
    and the corresponding disintegration of measures $\eta\in\mathcal M(\Gamma)$ 
    \begin{equation}
    \label{eq:disinte_eta_at_time_t}
	\eta = \int \omega^t_{\gamma'} {T_t}_{\#}\eta(d\gamma'), \qquad {T_t}_{\#}\eta = (T_t)_\sharp \eta, \ \gamma'\in\Gamma_t,
	\end{equation}
	
	where $\omega^t_{\gamma'}\in\mathcal M(\Gamma)$ satisfies $\omega^t_{\gamma'}(T_t^{-1}(\gamma'))=1$, namely it is strongly consistent according to the notation used in \cite{Fr}.
	
		Observe the following
	
	\begin{lemma}
		\label{Lem:dot_gamma_eq_proje}
		It holds for $\mathcal L^1 \times \eta$-a.e. $(t,\gamma)\in(-1,T)\times\Gamma$
		\begin{equation*}
		\dot \gamma(t) = \int \dot \gamma'(t) \omega^t_{T_t(\gamma)}(d\gamma').
		\end{equation*}
	\end{lemma}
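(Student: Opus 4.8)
The plan is to exploit the one structural feature of the disintegration \eqref{eq:disinte_eta_at_time_t}: since $\omega^t_{\gamma'}$ is strongly consistent, i.e.\ concentrated on $T_t^{-1}(\gamma')$, any two curves in the support of $\omega^t_{T_t(\gamma)}$ coincide on the open interval $(t,T)$, hence (being absolutely continuous, as I recall below) also at the endpoint $t$ by continuity, so they share the same \emph{right} derivative at $t$. Thus the only thing that can vary among the curves $\gamma'$ of a fibre is their behaviour on $(-1,t)$, which is invisible to the right derivative at $t$. The content of the lemma is then that, at $\mathcal L^1\times\eta$-a.e.\ $(t,\gamma)$, the genuine two-sided derivative $\dot\gamma(t)$ coincides with this common right derivative $\dot\gamma(t^+)$, so that averaging $\dot{\gamma'}(t)$ over the fibre reproduces $\dot\gamma(t)$. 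I therefore fix once and for all the representative of $\dot\gamma$ given by the classical derivative wherever it exists.

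First I record that $\eta\in\mathcal M(\Gamma)$ forces $\int\|\dot\gamma\|_{L^2}^2\,\eta(d\gamma)\le1$, so $\gamma\in W^{1,2}((-1,T),\R^d)$ for $\eta$-a.e.\ $\gamma$; such $\gamma$ is absolutely continuous and classically differentiable at $\mathcal L^1$-a.e.\ $t$, and at any such time $\dot\gamma(t)=\dot\gamma(t^+)$. Consider the bad set
\[
N := \bigl\{(t,\gamma')\in(-1,T)\times\Gamma : \gamma'\ \text{is not differentiable at}\ t\bigr\}.
\]
For each absolutely continuous $\gamma'$ the slice $N_{\gamma'}$ is $\mathcal L^1$-null, so Tonelli gives $(\mathcal L^1\times\eta)(N)=0$. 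The heart of the argument is to transfer this nullity through the disintegration so that the integrand $\dot{\gamma'}(t)$ on the right-hand side is well defined and, in fact, constant $\omega^t_{T_t(\gamma)}$-a.e. For fixed $t$ put $N_t=\{\gamma':(t,\gamma')\in N\}$; the map $\gamma\mapsto\int\mathbf 1_{N_t}(\gamma')\,\omega^t_{T_t(\gamma)}(d\gamma')$ depends on $\gamma$ only through $T_t(\gamma)$, so the defining property of \eqref{eq:disinte_eta_at_time_t} yields
\[
\int_\Gamma\Bigl(\int_\Gamma\mathbf 1_{N_t}(\gamma')\,\omega^t_{T_t(\gamma)}(d\gamma')\Bigr)\eta(d\gamma)=\int_\Gamma\mathbf 1_{N_t}(\gamma')\,\eta(d\gamma')=\eta(N_t).
\]
Integrating in $t$ and using $(\mathcal L^1\times\eta)(N)=0$, the nonnegative integrand must vanish for $\mathcal L^1\times\eta$-a.e.\ $(t,\gamma)$; that is, $\omega^t_{T_t(\gamma)}$ is concentrated on curves $\gamma'$ differentiable at $t$.

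For each such $\gamma'$ the relations $\gamma'(t)=\gamma(t)$ and $\gamma'(s)=\gamma(s)$ for $s\in(t,T)$ force $\dot{\gamma'}(t)=\dot{\gamma'}(t^+)=\dot\gamma(t^+)$, a value independent of $\gamma'$; hence $\int\dot{\gamma'}(t)\,\omega^t_{T_t(\gamma)}(d\gamma')=\dot\gamma(t^+)$, and since $\dot\gamma(t)=\dot\gamma(t^+)$ at $\mathcal L^1\times\eta$-a.e.\ $(t,\gamma)$ (again by a.e.\ differentiability of absolutely continuous curves and Tonelli), the claimed identity follows. The main obstacle, and the only point requiring care, is the joint measurability needed to run the two Fubini--Tonelli steps: one must know that $(t,\gamma)\mapsto\omega^t_{T_t(\gamma)}$ is measurable in $t$, which is the standard consistency statement for disintegrations \cite{Fr}, and that $N$ is product-measurable, which holds because differentiability at $t$ is detected by countably many difference quotients with rational increments, each continuous in $\gamma'$ on the compact pieces $\Gamma(n)$. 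Once these measurability issues are settled, the proof is bookkeeping around the single geometric fact that the fibres of $T_t$ share their right derivative at $t$.
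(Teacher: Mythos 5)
Your proof is correct and follows essentially the same route as the paper's: the key observation in both is that two curves in the same $T_t$-fibre coincide on $[t,T)$, hence share their derivative at $t$ whenever it exists, so the fibre average of $\dot\gamma'(t)$ returns $\dot\gamma(t)$ after discarding an $\mathcal L^1\times\eta$-null set. Your version merely makes explicit the Fubini--Tonelli and disintegration bookkeeping that the paper compresses into ``removing a set of $\mathcal L^1\times\eta$-measure $0$.''
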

	
	\begin{proof}
		Let $t\in(-1,T)$ be such that $\dot\gamma(t)$ exists for $\eta$-a.e. $\gamma$.
		If $T_t(\gamma')=T_t(\gamma)$ and $\gamma'$ is differentiable in $t$, then $\dot\gamma(t)=\dot\gamma'(t)$. Thus removing a set of $\mathcal L^1\times\eta$-measure $0$, it follows that $\dot \gamma( t)$ is constant on equivalence classes, from which the above formula follows.
	\end{proof}

Moreover, one has the following

	\begin{lemma}
	The map
	\begin{equation*}
	\eta \mapsto \eta_t = (T_t)_\sharp \eta
	\end{equation*}
	is continuous from $(\mathcal M(\Gamma), d_{\mathcal M(\Gamma)})$ to $(\mathcal M(\Gamma_t), d_{\mathcal M(\Gamma_t)})$.
\end{lemma}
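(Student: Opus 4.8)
The plan is to reduce the assertion to the elementary fact that push-forward along a continuous map is continuous for the narrow topology, and then to combine this with the metrization already established.

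First I would observe that, by Corollary~\ref{Cor:calM_compact_metric} applied to $\Gamma$ and, in the same way, to $\Gamma_t$, both $d_{\mathcal M(\Gamma)}$ and $d_{\mathcal M(\Gamma_t)}$ metrize narrow convergence on the respective spaces. Since source and target are metric spaces, continuity coincides with sequential continuity, so it is enough to show that $\eta_n\rightharpoonup\eta$ narrowly in $\mathcal M(\Gamma)$ forces $(T_t)_\sharp\eta_n\rightharpoonup(T_t)_\sharp\eta$ narrowly in $\mathcal M(\Gamma_t)$.

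Next I would use that the restriction map $T_t:\Gamma\to\Gamma_t$ is a contraction for the respective $L^2$-metrics, hence continuous. Therefore, for every bounded continuous $\psi:\Gamma_t\to\R$, the composition $\psi\circ T_t$ is bounded and continuous on $\Gamma$, and the change-of-variables formula for push-forward yields
\[
\int_{\Gamma_t}\psi\,d\big((T_t)_\sharp\eta_n\big)=\int_{\Gamma}\psi\circ T_t\,d\eta_n\longrightarrow\int_{\Gamma}\psi\circ T_t\,d\eta=\int_{\Gamma_t}\psi\,d\big((T_t)_\sharp\eta\big),
\]
the middle convergence being exactly the definition of $\eta_n\rightharpoonup\eta$ tested against the admissible function $\psi\circ T_t$. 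As $\psi$ is arbitrary, this is the sought narrow convergence of the push-forwards, and the claim follows.

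I do not expect a genuine obstacle, since this is a soft-topology statement whose whole content sits in the continuity of $T_t$, itself a consequence of having placed the $L^2$-topology on both $\Gamma$ and $\Gamma_t$; in particular no tightness-preservation argument is needed, as narrow continuity of push-forward requires only continuity of the underlying map. The two points deserving a line of care are bookkeeping: one should check that $(T_t)_\sharp$ indeed lands in $\mathcal M(\Gamma_t)$, which follows from the monotonicity $\|\dot\gamma\|_{L^2(t,T)}\le\|\dot\gamma\|_{L^2(-1,T)}$ together with the control of $\gamma(t)$ by $\gamma(0)$ and $\int_0^t\dot\gamma(s)\,ds$; and one should record that $d_{\mathcal M(\Gamma_t)}$ is built by the same L\'evy--Prokhorov construction as in Corollary~\ref{Cor:calM_compact_metric}, so that it does metrize narrow convergence on the target.
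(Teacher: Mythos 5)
Your argument is correct and follows essentially the same route as the paper's proof: both rest on the fact that $T_t$ is a contraction, hence continuous, so that $\psi\circ T_t$ is an admissible test function and narrow convergence is transferred by the definition of push-forward. Your additional bookkeeping (metrizability of both spaces and the check that $(T_t)_\sharp$ lands in $\mathcal M(\Gamma_t)$) merely makes explicit what the paper leaves implicit.
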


\begin{proof}
	The map $T_t$ is a {contraction}. Hence for any continuous function $\phi$ on $\Gamma_t$ the map $\phi\circ T_t$ is continuous on $\Gamma$ and the statement of the lemma follows from the definition of push-forward.
\end{proof}
	
	Recalling the definition of $W_0$ given in \eqref{eq:v0}, define for all $t\in[0,T]$
	
	\begin{equation}
	\label{eq:sticky_particle_property}
	V_t(\gamma)=\int W_0(\gamma') \omega^t_{T_t(\gamma)}(d\gamma').
	\end{equation}
	
	{This function is defined for $\eta$-a.e. $(t,\gamma)$.}
	
	\begin{remark}
		Notice that in general it may happen that $V_0(\gamma)\neq W_0(\gamma)$, and therefore the velocity of the curves in $\Gamma$ we consider may have an initial jump at time $t=0$. Since, as we will see in Corollary \ref{cor:BV}, for the kind of solutions $\eta\in\mathcal M(\Gamma)$ we consider the map $t\mapsto V_t$ defined from a vector field $W_0$ will be right continuous in $L^2_{\eta}$,  one could equivalently modify the curves $\gamma$ on $(-1,0)$ (and therefore $W_0$) defining   $\dot\gamma_{\llcorner(-1,0]}=\lim _{t\searrow 0}V_t(\gamma)$.
	\end{remark}

	We are now ready to give our definition of dissipative solution. 
	\begin{definition}
		\label{def:sticky}
		We say that $\eta\in\mathcal M(\Gamma)$ is a \emph{dissipative solution} of \eqref{eq:E} if it holds
		\begin{equation}\label{eq:diss}
		\dot \gamma(t) = V_t(\gamma) \qquad \mathcal L^1 \times \eta\text{-a.e. on $[0,T]\times \Gamma$.}
		\end{equation}
	\end{definition}
	
	The above condition can be rewritten as follows: for every continuous bounded function $\phi(t,\gamma)$ on $[0,T]\times\Gamma$ it holds
	\begin{equation*}
	\int \bigg[ \int \phi(t,\gamma) \dot \gamma(t) dt \bigg] \eta(d\gamma) = \int \bigg[ \int \phi(t,\gamma) V_t(\gamma) \eta(d\gamma) \bigg] dt.
	\end{equation*}
	
The fact that, for $t>s$, the map $T_t$ induces a coarser partition than $T_s$ (or, in other words, that it induces a descending in time  filtration on $\Gamma$), can be expressed at the level of disintegrations of $\eta$ at different times in the following way. 

For $s<t$, let $T_{s\to t}:\Gamma_s\to\Gamma_t$ be the restriction map such that $T_{s\to t}\circ T_s=T_t$. Then {by disintegrating again}

\[
{(T_s)}_{\#}\eta(d\gamma')=\int\omega^{s\to t}_{\gamma''}(d\gamma'){(T_t)}_{\#}\eta(d\gamma'')
\]

and 

\begin{align}
\eta(d\gamma)&=\int\omega^s_{\gamma'}(d\gamma){(T_s)}_{\#}\eta(d\gamma')\notag\\
&=\int\int\omega^s_{\gamma'}(d\gamma)\omega^{s\to t}_{\gamma''}(d\gamma')(T_{s\to t}\circ T_s)_{\#}\eta(d\gamma'')\notag\\
&=\int\int\omega^s_{\gamma'}(d\gamma)\omega^{s\to t}_{\gamma''}(d\gamma'){(T_t)}_{\#}\eta(d\gamma'')\notag\\
&=\int\omega^t_{\gamma''}(d\gamma){(T_t)}_{\#}\eta(d\gamma'').
\end{align}

Therefore, by the uniqueness of the disintegration, {for $(T_t)_\sharp \eta$-a.e. $\gamma''$}
\begin{equation}\label{eq:rhoomega}
\omega^t_{\gamma''}(d\gamma)=\int\omega^s_{\gamma'}(d\gamma)\omega^{s\to t}_{\gamma''}(d\gamma').
\end{equation}

The terminology used for this kind of solutions is consistent with the following

\begin{proposition}\label{prop:diss}
	Let $\Psi:\R^d\to\R$ be a convex function and let $\eta\in\mathcal M(\Gamma)$ be a dissipative solution. Then, the map
	\begin{equation}\label{eq:psidiss}
	t\mapsto\int\Psi(V_t(\gamma))\eta(d\gamma)
	\end{equation}
	is nonincreasing on $(-1,T]$, where by convention we set $V_t=W_0$ if $t\in (-1,0)$. In particular, taking $\Psi=|\cdot|^2$ one has the dissipation balance
	\begin{equation}\label{eq:squarediss}
	\int|V_s(\gamma)-V_t(\gamma)|^2\eta(d\gamma)=\int|V_s(\gamma)|^2\eta(d\gamma)-\int|V_t(\gamma)|^2\eta(d\gamma), \quad\forall\,s\leq t, s,t\in(-1,T].
	\end{equation}
\end{proposition}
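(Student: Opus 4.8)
The plan is to recognize $V_t$ as a reverse-time martingale. By its very definition \eqref{eq:sticky_particle_property}, $V_t(\gamma)$ is the conditional average of the initial velocity field $W_0$ over the equivalence class $T_t^{-1}(T_t(\gamma))$, i.e.\ the conditional expectation of $W_0$ with respect to the $\sigma$-algebra $\mathcal F_t$ generated by the partition $\sim_t$. Since for $s<t$ the map $T_{s\to t}$ satisfies $T_{s\to t}\circ T_s=T_t$, the partition $\sim_s$ refines $\sim_t$, so the $\mathcal F_t$ form a filtration that is \emph{decreasing} in $t$. Moreover $V_t$ depends only on $T_t(\gamma)$, hence descends to a Borel function $\bar V_t$ on $\Gamma_t$ (measurability coming from that of the disintegration map $\gamma'\mapsto\omega^t_{\gamma'}$). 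Once phrased this way, the monotonicity is the standard fact that convex functions of conditional expectations decrease under coarsening, and the dissipation balance is the orthogonality of martingale increments; in fact the whole statement uses only the structure of the disintegrations \eqref{eq:disinte_eta_at_time_t} and not the dissipative condition \eqref{eq:diss} itself.

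The key identity is the tower property $V_t=\mathbb E[V_s\mid\mathcal F_t]$, which I read off from \eqref{eq:rhoomega}. Fixing $s<t$ and inserting $\omega^t_{\gamma''}(d\gamma)=\int\omega^s_{\gamma'}(d\gamma)\,\omega^{s\to t}_{\gamma''}(d\gamma')$ into \eqref{eq:sticky_particle_property}, and using that $\int W_0(\gamma)\,\omega^s_{\gamma'}(d\gamma)=\bar V_s(\gamma')$ for $(T_s)_\#\eta$-a.e.\ $\gamma'$ (this is $V_s$ evaluated on the class $T_s^{-1}(\gamma')$), I obtain
\begin{equation*}
\bar V_t(\gamma'')=\int\bar V_s(\gamma')\,\omega^{s\to t}_{\gamma''}(d\gamma')\qquad\text{for }(T_t)_\#\eta\text{-a.e. }\gamma''.
\end{equation*}
For $\Psi$ convex, Jensen's inequality against the probability measure $\omega^{s\to t}_{\gamma''}$ gives $\Psi(\bar V_t(\gamma''))\le\int\Psi(\bar V_s(\gamma'))\,\omega^{s\to t}_{\gamma''}(d\gamma')$. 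Integrating in $\gamma''$ against $(T_t)_\#\eta$, using the disintegration $(T_s)_\#\eta(d\gamma')=\int\omega^{s\to t}_{\gamma''}(d\gamma')\,(T_t)_\#\eta(d\gamma'')$ and the identity $\int\Psi(V_t)\,\eta=\int\Psi(\bar V_t)\,(T_t)_\#\eta$ (valid since $V_t$ is $\sim_t$-constant), I conclude $\int\Psi(V_t)\,\eta\le\int\Psi(V_s)\,\eta$. On $(-1,0)$ each $T_t$ is injective, since knowing $\gamma\llcorner_{(t,T)}$ for $t<0$ recovers the affine piece and hence all of $\gamma$, so the partition is trivial and $V_t=W_0$, matching the stated convention; the same Jensen step across $t=0$ gives $\int\Psi(V_0)\,\eta\le\int\Psi(W_0)\,\eta$, so monotonicity holds on all of $(-1,T]$.

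For the quadratic case $\Psi=|\cdot|^2$ I expand
\begin{equation*}
\int|V_s-V_t|^2\,\eta=\int|V_s|^2\,\eta-2\int\scalare{V_s,V_t}\,\eta+\int|V_t|^2\,\eta,
\end{equation*}
and evaluate the cross term by the same tower identity. Disintegrating along $T_t$ and using that $V_t$ is $\sim_t$-constant,
\begin{equation*}
\int\scalare{V_s,V_t}\,\eta=\int\Bigl\langle\int V_s(\gamma)\,\omega^t_{\gamma''}(d\gamma),\ \bar V_t(\gamma'')\Bigr\rangle (T_t)_\#\eta(d\gamma'')=\int|\bar V_t|^2\,(T_t)_\#\eta=\int|V_t|^2\,\eta,
\end{equation*}
because $\int V_s(\gamma)\,\omega^t_{\gamma''}(d\gamma)=\bar V_t(\gamma'')$ is precisely the tower identity (via \eqref{eq:rhoomega}). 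Substituting yields the balance \eqref{eq:squarediss}. The main technical obstacle is the disintegration bookkeeping: one must justify the Fubini-type exchanges and the $\eta$-a.e.\ identification of the conditional averages $\bar V_s,\bar V_t$ as genuine functions on $\Gamma_s,\Gamma_t$, which rests on the strong consistency of the disintegrations \eqref{eq:disinte_eta_at_time_t} and the relation \eqref{eq:rhoomega}. The $L^1$-integrability needed to run Jensen is automatic in the quadratic case, since $W_0\in L^2_\eta$ by the energy bound defining $\mathcal M(\Gamma)$.
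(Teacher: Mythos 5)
Your proof is correct and follows essentially the same route as the paper: both rest on the tower identity derived from \eqref{eq:rhoomega} (the reverse-martingale property of $V_t$), Jensen's inequality against $\omega^{s\to t}_{\gamma''}$ for the monotonicity of \eqref{eq:psidiss}, and the cross-term computation $\int V_s\cdot V_t\,\eta=\int|V_t|^2\,\eta$ for the balance \eqref{eq:squarediss}. Your side observation that only the disintegration structure, and not the dissipative condition \eqref{eq:diss}, is used is likewise consistent with the paper's argument.
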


\begin{proof}
	Recalling the disintegration formula \eqref{eq:rhoomega} and applying Jensen's inequality one has that
	\begin{align}
	\int\Psi(V_t(\gamma))\eta(d\gamma)&=\int\Psi\biggl(\int W_0(\gamma')\omega^t_{T_t(\gamma)}(d\gamma')\biggr){(T_t)}_{\#}\eta(d\gamma)\notag\\
	&=\int\Psi\biggl(\int W_0(\gamma')\omega^s_{\gamma''}(d\gamma')\omega^{s\to t}_{T_t(\gamma)}(d\gamma'')\biggr){(T_t)}_{\#}\eta(d\gamma)\notag\\
	&\leq \int\Psi(V_s(\gamma''))\omega^{s\to t}_{T_t(\gamma)}(d\gamma''){(T_t)}_{\#}\eta(d\gamma)\notag\\
	&=\int\Psi(V_s(\gamma)){(T_s)}_{\#}\eta(d\gamma)\notag\\
	&=\int\Psi(V_s(\gamma))\eta(d\gamma).
	\end{align}
	
	To prove that the dissipation balance \eqref{eq:squarediss} holds, we observe that
	
	\begin{align}
	\int V_t(\gamma)\cdot V_s(\gamma)\eta(d\gamma)&=\int\int V_t(\gamma)\cdot V_s(\gamma)\omega^s_{\gamma'}(d\gamma){(T_s)}_{\#}\eta(d\gamma')\notag\\
	&=\int V_t(\gamma')\cdot V_s(\gamma'){(T_s)}_{\#}\eta(d\gamma')\notag\\
	&=\int\int V_t(\gamma')\cdot V_s(\gamma')\omega^{s\to t}_{\gamma''}(d\gamma'){(T_t)}_{\#}\eta(d\gamma'')\notag\\
	&=\int V_t(\gamma'')\cdot V_t(\gamma''){(T_t)}_{\#}\eta(d\gamma'')\notag\\
	&=\int|V_t(\gamma)|^2\eta(d\gamma).
	\end{align}
\end{proof}

Let us also define the total dissipation of a measure $\eta\in\mathcal M(\Gamma)$ as follows.

\begin{definition}[Total dissipation]
	We define the total dissipation of a  measure $\eta\in\mathcal M(\Gamma)$ as
	\begin{equation}\label{eq:dissipation}
	D(\eta)=T\|W_0\|_{L^2_\eta}^2-\int\int_0^T|V_t(\gamma)|^2dt\eta(d\gamma).
	\end{equation}
\end{definition}

Notice that for any dissipative solution $\eta$, $D(\eta)\geq0$. Moreover, for any initial datum $(e_0,W_0)_{\#}\eta\in\mathcal P_2(\R^d\times\R^d)$, being $e_0:\Gamma\to\R^d$ the evaluation map $e_0(\gamma)=\gamma(0)$, there exists always a dissipative solution concentrated on straight lines of constant velocity $W_0(\gamma)$, which has zero dissipation.

We now give a precise definition of the concept of sticky particle solution. For every $t\in[0,T]$, let $e_t:\Gamma\to\R^d$ be the evaluation map $e_t(\gamma)=\gamma(t)$. 

\begin{definition}
	We say that $\eta\in\mathcal M(\Gamma)$ is \emph{sticky particle solution} of \eqref{eq:E} if $\eta$ is concentrated on a subset of $\Gamma$ on which, for all $t\in[0,T]$, the maps $T_t$ and $e_t$ induce the same equivalence relation.
\end{definition}

\begin{figure}
	Two particles which collide and stick or intersect without joining.
\end{figure}

	\subsection{Compactness of the set of dissipative solution}
	\label{Ss:conve_sticky}
	
	The aim of this section is to show that the set of dissipative solutions is closed w.r.t. weak convergence. In order to pass to the limit in the relation \eqref{eq:diss} we embed the dissipative solutions into a larger compact space of Young measures generated by the disintegrations of the measures $\eta\in\mathcal M(\Gamma)$ w.r.t. the restriction maps $T_t$.

	For any  $\eta\in\mathcal P(\Gamma)$, define the map 
	\begin{equation}
	F(\eta):[0,T]\times\Gamma\to\mathcal P(\Gamma), \quad F(\eta)(t,\gamma)=\omega^t_{T_t(\gamma)},
	\end{equation} 
	where $\{\omega^t_{\gamma'}\}_{\gamma'\in\Gamma_t}$ is the disintegration of $\eta$ w.r.t. the restriction map $T_t$. {This map is defined $\mathcal L^1 \times \eta$-a.e..}
	Let then 
	\begin{equation}\label{eq:mueta}
	\mu(\eta)=\big(\Id_{[0,T]\times\Gamma}\times F(\eta)\big)_{\#}\big(\mathcal L^1\times \eta\big).
	\end{equation} 
	By definition,
	\[
	\mu(\eta)\in \mathcal P([0,T]\times\Gamma\times\mathcal P(\Gamma)).
	\]
	
	With the disintegration \eqref{eq:disinte_eta_at_time_t} we can rewrite $\mu(\eta)$ as
	\begin{equation*}
	\int \phi(t,\gamma,\omega) \mu(\eta)(dtd\gamma d\omega) = \int \phi(t,\gamma,\omega^t_{T_t(\gamma')}) \eta(d\gamma) dt.
	\end{equation*}
	for every continuous function $\phi$.

	\begin{lemma}
		The set 
		\[
		\biggl\{\mu(\eta)\in\mathcal P([0,T]\times\Gamma\times\mathcal P(\Gamma)):\,\eta\in\mathcal M(\Gamma) \biggr\}
		\]
		is tight.
	\end{lemma}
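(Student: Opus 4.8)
The plan is to prove tightness directly by exhibiting, for each $\eps>0$, a compact set $K_\eps\subset[0,T]\times\Gamma\times\mathcal P(\Gamma)$ whose complement carries mass less than $\eps$ under every $\mu(\eta)$. Since $[0,T]$ is compact and the three factors are Polish, it suffices to take $K_\eps$ of product form $[0,T]\times\Gamma(n)\times\mathcal K_R$ and to control the two nontrivial marginals separately. The marginal of $\mu(\eta)$ on the first two coordinates is exactly $\mathcal L^1\times\eta$ (up to the normalizing factor $T$), so the tightness already established in Lemma~\ref{Lem:calM_right} gives $\eta(\Gamma\setminus\Gamma(n))\le 2/n$ uniformly in $\eta\in\mathcal M(\Gamma)$, using the sets $\Gamma(n)$ from its proof. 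The real content is therefore the marginal on the factor $\mathcal P(\Gamma)$, which records the law of the random conditional measure $\omega^t_{T_t(\gamma)}$ as $(t,\gamma)$ ranges over $\mathcal L^1\times\eta$.

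For this factor I would use as compact sets the sublevel sets of the second moments,
\[
\mathcal K_R:=\Bigl\{\omega\in\mathcal P(\Gamma):\ \int|\gamma(0)|^2\,\omega(d\gamma)\le R,\ \int\|\dot\gamma\|_{L^2(-1,T)}^2\,\omega(d\gamma)\le R\Bigr\}.
\]
These are compact in $\mathcal P(\Gamma)$ by exactly the argument proving Lemma~\ref{Lem:calM_right}: Chebyshev's inequality gives $\omega(\Gamma\setminus\Gamma(m))\le 2R/m$ uniformly on $\mathcal K_R$, so $\mathcal K_R$ is tight, while the two constraining functionals are lower semicontinuous for narrow convergence, so $\mathcal K_R$ is closed; tightness and closedness give compactness. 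With $K_\eps=[0,T]\times\Gamma(n)\times\mathcal K_R$ one then estimates
\[
\mu(\eta)\bigl(([0,T]\times\Gamma\times\mathcal P(\Gamma))\setminus K_\eps\bigr)\le T\,\eta(\Gamma\setminus\Gamma(n))+\mathcal L^1\times\eta\bigl(\{(t,\gamma):\omega^t_{T_t(\gamma)}\notin\mathcal K_R\}\bigr),
\]
so it remains to bound the second term uniformly in $\eta$.

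The key step, and the point where care is needed, is that the conditional measures $\omega^t_{\gamma'}$ need not individually lie in $\mathcal M(\Gamma)$; only their averages are controlled. Setting $m(t,\gamma):=\int|\gamma''(0)|^2\,\omega^t_{T_t(\gamma)}(d\gamma'')$, the function $m(t,\cdot)$ is constant on the fibres of $T_t$, so by the consistency of the disintegration \eqref{eq:disinte_eta_at_time_t} (the tower property)
\[
\int_\Gamma m(t,\gamma)\,\eta(d\gamma)=\int_{\Gamma_t}\Bigl[\int|\gamma''(0)|^2\,\omega^t_{\gamma'}(d\gamma'')\Bigr]{(T_t)}_{\#}\eta(d\gamma')=\int_\Gamma|\gamma''(0)|^2\,\eta(d\gamma'')\le1,
\]
and the analogous identity holds for $k(t,\gamma):=\int\|\dot\gamma''\|_{L^2(-1,T)}^2\,\omega^t_{T_t(\gamma)}(d\gamma'')$. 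Integrating in $t$ gives $\int m\,d(\mathcal L^1\times\eta)\le T$ and $\int k\,d(\mathcal L^1\times\eta)\le T$, whence Chebyshev's inequality yields
\[
\mathcal L^1\times\eta\bigl(\{\omega^t_{T_t(\gamma)}\notin\mathcal K_R\}\bigr)\le \mathcal L^1\times\eta(\{m>R\})+\mathcal L^1\times\eta(\{k>R\})\le \frac{2T}{R}.
\]
Thus $\mu(\eta)(\,\cdot\setminus K_\eps)\le 2T/n+2T/R$ uniformly in $\eta$ (divide by $T$ for the normalized $\mu(\eta)$), and choosing $n,R$ large makes this less than $\eps$. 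Since $K_\eps$ is a product of compact sets it is compact, proving the tightness. The main obstacle is precisely this third marginal: one must resist assuming $\omega^t_{\gamma'}\in\mathcal M(\Gamma)$ pointwise and instead use only the averaged second-moment bound coming from the consistency of the disintegration.
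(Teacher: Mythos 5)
Your proof is correct and follows essentially the same route as the paper: the compact sets $[0,T]\times\Gamma(n)\times\mathcal K_R$ coincide with the paper's $K(n)$ (your $\mathcal K_R$ is its $\mathcal M_R(\Gamma)$), and the control of the third marginal via the tower property of the disintegration plus Chebyshev is exactly the paper's argument, up to the inessential reordering of the Chebyshev step and the integration in $t$. Your explicit warning that $\omega^t_{\gamma'}$ need not lie in $\mathcal M(\Gamma)$ pointwise, only on average, is precisely the point the paper's computation is exploiting.
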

\begin{proof}
	Define the set
	\[
	\mathcal M_n(\Gamma) = \bigg\{ \eta\in\mathcal P(\Gamma):\,\int|\gamma(0)|^2\eta(d\gamma)\leq n,\,\int\|\dot\gamma\|_2^2\eta(d\gamma)\leq n \bigg\}.
	\]
	Then the set 
	\[
	K(n)=[0,T]\times\Gamma(n)\times\mathcal M_n(\Gamma)
	\]
	is compact in $[0,T]\times\Gamma\times\mathcal P(\Gamma)$ {(just adapt the proofs of Lemma \ref{Lem:calM_right} and Corollary \ref{Cor:calM_compact_metric} to $\mathcal M_n(\Gamma)$)}.
	Let now $\eta\in\mathcal M(\Gamma)$ and $\mu(\eta)$ defined as in \eqref{eq:mueta}.
	
	On the one hand,
	\begin{equation}
\mu(\eta)([0,T]\times\Gamma(n)^c\times\mathcal P(\Gamma))=\bigl(\mathcal L^1\times \eta\bigr)([0,T]\times\Gamma(n)^c)\leq\frac{T}{n}.
\end{equation}

On the other hand, 
\begin{align}
\mu(\eta)(&[0,T]\times\Gamma(n)\times\mathcal M_n(\Gamma)^c)\leq\int_0^T \eta \big( \{\gamma\in\Gamma:\,\omega^t_{T_t(\gamma)}\in\mathcal M_n(\Gamma)^c\} \big) dt \notag\\
&=\int_0^T\eta\bigg(\bigg\{\gamma\in\Gamma:\int|\gamma'(0)|^2\omega^t_{T_t(\gamma)}(d\gamma')>n \text{ or } \int\|\dot{\gamma'}\|_2^2 \omega^t_{T_t(\gamma)}(d\gamma')>n\biggr\}\biggr) dt.
\end{align}
Since $\eta\in\mathcal M(\Gamma){= \mathcal M_1(\Gamma)}$,
\[
1\geq\int|\gamma(0)|^2\eta(d\gamma)=\int\int|\gamma'(0)|^2\omega^t_{T_t(\gamma)}(d\gamma'){(T_t)}_{\#}\eta(dT_t(\gamma)),
\]
therefore
\[
\eta\biggl(\biggl\{\gamma\in\Gamma:\,\int|\gamma'(0)|^2\omega^t_{T_t(\gamma)}(d\gamma')>n\biggr\}\biggr)<\frac1n.
\]
Similarly, one proves that 
\[
\eta\biggl(\biggl\{\gamma\in\Gamma:\,\int\|\dot{\gamma'}\|_2^2\omega^t_{T_t(\gamma)}(d\gamma')>n\biggr\}\biggr)\leq\frac{1}{n}
\]
and the lemma is proved.
\end{proof}

The closure of the set $\biggl\{\mu(\eta)\in\mathcal P([0,T]\times\Gamma\times\mathcal P(\Gamma)):\,\eta\in\mathcal M(\Gamma) \biggr\}$ is {contained in} the compact set

\begin{equation}
\begin{split}
\mathcal D=\biggl\{\mu\in\mathcal P([0,T]\times\Gamma\times\mathcal P(\Gamma))&:\,\mathit {p_{[0,T]\times\Gamma}}_{\#}\mu=\mathcal L^1\times\eta,\,\eta\in\mathcal M(\Gamma), \\
& \ \int\int \frac{|\gamma'(0)|^2}{T} \omega(d\gamma')\mu(dtd\gamma d\omega) \leq 1, \int\int \|\dot{\gamma'}\|_2^2\omega(d\gamma')\mu(dtd\gamma d\omega) \leq 1 \biggr\}.
\end{split}
\end{equation}

Indeed, the fact that each $\mu \in \mathcal D$ is tight follows from the same proof of the previous lemma. The fact that every narrow limit $\mathcal D \ni \mu_n \to \mu$ belongs to $\mathcal D$ is a consequence of the observation that \begin{equation*}
\omega \mapsto \int \|\dot{\gamma'}\|_2^2 \omega(d\gamma')
\end{equation*}
is l.s.c. w.r.t. the narrow convergence, being $\gamma' \to \|\dot{\gamma'}\|_2^2$ convex l.s.c..

Our aim is now to prove that if $\eta^n\rightharpoonup\eta$ with $\eta^n$ dissipative solutions, then also $\eta$ is dissipative. We will do it looking at the weak limit of the $\mu(\eta^n)$ in $\mathcal D$. In particular, we will use a dense family of smooth test functions of the form

\[
\phi_0(t)\tilde{\phi}(T_{\bar t}(\gamma))\phi(\gamma)\psi(\omega),
\]
where $\bar t>t$ for all $t\in\supp\phi_0$.

\begin{lemma}
	Any weak limit $\mu$ of a weakly convergent sequence $\{\mu(\eta^n):\eta^n\in\mathcal M(\Gamma),\,n\in\N\}$ satisfies
	\begin{equation}\label{eq:test1}
	\int\phi_0(t)\tilde{\phi}(T_{\bar t}(\gamma))\phi(\gamma)\psi(\omega)\mu(dtd\gamma d\omega)=\int\phi_0(t)\tilde{\phi}(T_{\bar t}(\gamma))\psi(\omega)\biggl(\int\phi(\gamma')\omega(d\gamma')\biggr)\mu(dtd\gamma d\omega).
	\end{equation}
\end{lemma}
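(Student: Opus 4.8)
The plan is to exploit that \eqref{eq:test1} is an \emph{exact} identity already at the level of each $\mu(\eta^n)$, and that both of its sides are integrals against $\mu(\eta^n)$ of \emph{bounded continuous} functions on $[0,T]\times\Gamma\times\mathcal P(\Gamma)$; since such an identity is then stable under narrow limits, it will hold for $\mu$. Concretely, set
\[
G_1(t,\gamma,\omega)=\phi_0(t)\,\tilde\phi(T_{\bar t}(\gamma))\,\phi(\gamma)\,\psi(\omega),\qquad G_2(t,\gamma,\omega)=\phi_0(t)\,\tilde\phi(T_{\bar t}(\gamma))\,\psi(\omega)\int\phi(\gamma')\,\omega(d\gamma'),
\]
so that the claim is simply $\int G_1\,d\mu=\int G_2\,d\mu$.

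First I would verify $\int G_1\,d\mu(\eta)=\int G_2\,d\mu(\eta)$ for every $\eta\in\mathcal M(\Gamma)$. Using the representation of $\mu(\eta)$, both integrals become integrals over $[0,T]\times\Gamma$ against $\mathcal L^1\times\eta$ with $\omega$ replaced by $\omega^t_{T_t(\gamma)}$. For fixed $t$ I would then disintegrate $\eta$ with respect to $T_t$ as in \eqref{eq:disinte_eta_at_time_t}. The key observation is that on each fiber $T_t^{-1}(\gamma'')$ the two factors $\tilde\phi(T_{\bar t}(\gamma))$ and $\psi(\omega^t_{T_t(\gamma)})$ are \emph{constant}: the former because $\bar t>t$ forces $T_{\bar t}=T_{t\to\bar t}\circ T_t$, so that $T_{\bar t}(\gamma)=T_{t\to\bar t}(\gamma'')$ depends only on $\gamma''$; the latter directly by definition of the disintegration. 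Pulling these constants out of the fiber integral, the $G_1$-integral over the fiber reads $\tilde\phi(T_{t\to\bar t}(\gamma''))\,\psi(\omega^t_{\gamma''})\int\phi(\gamma)\,\omega^t_{\gamma''}(d\gamma)$, which is exactly the $G_2$-integral over the same fiber after the harmless relabeling $\gamma\mapsto\gamma'$. Integrating against $\phi_0(t)\,dt\,(T_t)_\#\eta$ then gives the identity for $\mu(\eta)$.

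It remains to pass to the limit, for which I would check that $G_1$ and $G_2$ are bounded and continuous on the product space. For $G_1$ this is immediate: $\phi_0$ is continuous on $[0,T]$, $T_{\bar t}$ is a contraction so $\tilde\phi\circ T_{\bar t}$ is continuous on $\Gamma$, $\phi$ is continuous on $\Gamma$, and $\psi$ is continuous on $\mathcal P(\Gamma)$ in the narrow topology. For $G_2$ the only extra ingredient is the continuity of the barycentric map $\omega\mapsto\int\phi(\gamma')\,\omega(d\gamma')$, which holds in the narrow topology precisely because $\phi$ is bounded and continuous. Consequently $\int G_i\,d\mu(\eta^n)\to\int G_i\,d\mu$ for $i=1,2$, and passing to the limit in the equality $\int G_1\,d\mu(\eta^n)=\int G_2\,d\mu(\eta^n)$ yields \eqref{eq:test1}.

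The step I expect to be delicate is the fiberwise reduction, and in particular the role of the hypothesis $\bar t>t$ on $\supp\phi_0$: it is exactly what makes $\tilde\phi(T_{\bar t}(\gamma))$ measurable with respect to the partition induced by $T_t$, so that it can be treated as a constant inside $\omega^t_{\gamma''}$. Were $\bar t\le t$ allowed, this factor would genuinely vary along the fiber and the identity would break, which is why the admissible test functions are restricted to this form. The remaining points—the exact representation of $\mu(\eta)$ and the narrow continuity of the barycentric map—are routine, so the whole content of the lemma lies in observing that the conditional-expectation structure encoded by \eqref{eq:test1} is closed under the narrow convergence of the $\mu(\eta^n)$.
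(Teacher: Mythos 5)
Your proposal is correct and follows essentially the same route as the paper: verify the identity exactly for each $\mu(\eta^n)$ by disintegrating $\eta^n$ w.r.t.\ $T_t$ and using that $\tilde\phi\circ T_{\bar t}$ and $\psi(\omega^t_{T_t(\cdot)})$ are constant on the fibers of $T_t$ (which is where $\bar t>t$ enters), then pass to the narrow limit using that both integrands, in particular the barycentric map $\omega\mapsto\int\phi(\gamma')\,\omega(d\gamma')$, are bounded and continuous. The paper's proof is exactly this chain of equalities followed by the observation that the set of $\mu$ satisfying the identity is closed.
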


\begin{proof}
	One has that
	\begin{align}
	\int\phi_0(t)\tilde{\phi}(T_{\bar t}(\gamma))\phi(\gamma)\psi(\omega)\mu(\eta)(dtd\gamma d\omega)&=\int\int\phi_0(t)\tilde{\phi}(T_{\bar t}(\gamma))\phi(\gamma)\psi(\omega^t_{T_t(\gamma)})dt\eta(d\gamma)\notag\\
	&=\int\phi_0(t)\biggl(\int\int\tilde{\phi}(T_{\bar t}(\gamma))\phi(\gamma)\psi(\omega^t_{T_t(\gamma)})\omega^t_{\gamma'}(d\gamma){(T_t)}_{\#}\eta(\d\gamma')\biggr)dt\notag\\
	&=\int\phi_0(t)\biggl(\int \tilde \phi(T_{t,\bar t}(\gamma'))\biggl(\int\phi(\gamma)\psi(\omega^t_{\gamma'})\omega^t_{\gamma'}(d\gamma)\biggr){(T_t)}_{\#}\eta(d\gamma')\biggr)dt\notag\\
	&=\int\phi_0(t)\int\tilde{\phi}(T_{\bar t}(\gamma))\int\phi(\gamma')\psi(\omega^t_{T_t(\gamma)})\omega^t_{T_t(\gamma)}(d\gamma')\eta(d\gamma)dt\notag\\
	&=\int\phi_0(t)\tilde{\phi}(T_{\bar t}(\gamma))\biggl(\int\phi(\gamma')\omega(d\gamma')\biggr)\psi(\omega)\mu(\eta)(dtd\gamma d\omega).
	\end{align}
Since
	\begin{equation*}
	\omega \mapsto \int \phi(\gamma') \omega(d\gamma')
	\end{equation*}
	is continuous for $\phi$ continuous, then the set of $\mu$ with the property above is closed, and thus is contains the closure of $\{\mu(\eta),\eta \in \mathcal M(\Gamma)\}$.
\end{proof}

Now, using the fact that $\phi_0$  and $\bar t$ are arbitrary {under the condition $\supp \phi_0 \subset (-\infty,\bar t)$}, from \eqref{eq:test1} it follows that, for $\mathcal L^1$-a.e. $t \in [0,T]$

\begin{equation}\label{eq:test2}
\int\tilde{\phi}(T_t(\gamma))\phi(\gamma)\biggl(\int\psi(\omega)\mu_{t,\gamma}(d\omega)\biggr)\eta(d\gamma)=\int\tilde{\phi}(T_t(\gamma))\biggl(\int\psi(\omega)\biggl(\int\phi(\gamma')\omega(d\gamma')\biggr)\mu_{t,\gamma}(d\omega)\biggr)\eta(d\gamma),
\end{equation}
where 
\[
\mu(dtd\gamma d\omega)=\int\mu_{t,\gamma}(d\omega)dt\eta(d\gamma).
\]
Indeed by letting $\phi_0$ vary, one obtains for {$\mathcal L^1$-a.e. $t$ and} all $\bar t > t$
\begin{equation*}
\int \tilde{\phi}(T_{\bar t}(\gamma)) \phi(\gamma) \biggl( \int \psi(\omega) \mu_{t,\gamma}(d\omega) \biggr) \eta(d\gamma) = \int \tilde{\phi}(T_{\bar t}(\gamma)) \biggl( \int \psi(\omega) \biggl( \int \phi(\gamma') \omega(d\gamma') \biggr) \mu_{t,\gamma}(d\omega) \biggr) \eta(d\gamma).
\end{equation*}
If $\tilde \phi : \Gamma_t \to \R$ is a continuous function, one can test the above equation with
\begin{equation*}
\tilde \phi'(T_{\bar t}(\gamma)) = \tilde \phi \big( \gamma(\bar t) \chi_{[t,\bar t)} + T_{\bar t}(\gamma)\chi_{[\bar t,T]} \big).
\end{equation*}
The above function $\tilde \phi'  : \Gamma_{\bar t} \to \R$ is continuous because of the embedding $W^{1,2}([0,T])$ in $C^0$, and as $\bar t \searrow t$  the function $\tilde{\phi}'\circ T_{\bar t}$ converges pointwise to $\tilde \phi\circ T_t$. By the bounded convergence theorem we conclude that \eqref{eq:test2} holds.

Replacing $\eta$ in \eqref{eq:test2} with its disintegration w.r.t. $T_t$ and thanks to the fact that $\tilde{\phi}$ is arbitrary, one obtains that, for ${T_t}_{\#}\eta$-a.e. $\gamma'\in\Gamma_t$

\begin{align}
\int\phi(\gamma)\biggl(\int\psi(\omega)\mu_{t,\gamma}(d\omega)\biggr)\omega^t_{\gamma'}(d\gamma)=\int\biggl(\int\psi(\omega)\biggl(\int\phi(\gamma')\omega(d\gamma')\biggr)\mu_{t,\gamma}(d\omega)\biggr)\omega^t_{\gamma'}(d\gamma).
\end{align} 	
Taking now $\psi=1$, one gets
\begin{equation}
\int \phi(\gamma)\omega^t_{\gamma'}(d\gamma)=\int\int\biggl(\int\phi(\gamma')\omega(d\gamma')\biggr)\mu_{t,\gamma}(d\omega)\omega^t_{\gamma'}(d\gamma)
\end{equation}
and by the arbitrariness of $\phi$ 
\begin{equation}\label{eq:test3}
\omega^t_{\gamma'}(d\gamma)=\int\omega(d\gamma)\mu_{t,\gamma''}(d\omega)\omega^t_{\gamma'}(d\gamma'').
\end{equation}

We will use condition \eqref{eq:test3} to show the following proposition. 

\begin{proposition}
Let $\{\eta^n\}\subset\mathcal M(\Gamma)$ be a sequence of dissipative solutions such that $\eta^n\rightharpoonup\eta\in\mathcal M(\Gamma)$ as $n\to\infty$.
Then $\eta$ is a dissipative solution.	
\end{proposition}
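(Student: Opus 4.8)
The plan is to pass to the limit in the weak (test-function) form of the dissipative condition \eqref{eq:diss}, using the Young measures $\mu(\eta^n)$ and the consistency identity \eqref{eq:test3}. By compactness of $\mathcal D$ we may assume, up to a subsequence, that $\mu(\eta^n)\rightharpoonup\mu\in\mathcal D$, that the $[0,T]\times\Gamma$-marginal of $\mu$ is $\mathcal L^1\times\eta$, and that $\mu$ satisfies \eqref{eq:test3}, as already established. Throughout I write $b(\omega):=\int W_0(\gamma')\,\omega(d\gamma')$, so that by \eqref{eq:sticky_particle_property} one has $V_t(\gamma)=b(\omega^t_{T_t(\gamma)})$ and, since $\dot\gamma(t)$ and $V_t(\gamma)$ depend only on $(t,\gamma)$, the weak form of \eqref{eq:diss} for the dissipative solution $\eta^n$ reads, for every bounded continuous $\phi(t,\gamma)$,
\begin{equation*}
\int\phi(t,\gamma)\,\dot\gamma(t)\,\mu(\eta^n)(dtd\gamma d\omega)=\int\phi(t,\gamma)\,b(\omega)\,\mu(\eta^n)(dtd\gamma d\omega).
\end{equation*}

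First I would pass this identity to the limit in $n$. The right-hand side is the easy term: $\omega\mapsto b(\omega)$ is continuous for the narrow convergence, and by Cauchy--Schwarz together with the affineness of $\gamma'$ on $(-1,0)$ one has $|b(\omega)|^2\le\int\|\dot{\gamma'}\|_{L^2}^2\,\omega(d\gamma')$, whose $\mu(\eta^n)$-integral equals $\int_0^T\!\int\|\dot\gamma\|_{L^2}^2\,\eta^n\,dt$ and is therefore uniformly bounded by the constraints defining $\mathcal D$; thus $b$ is bounded in $L^2(\mu(\eta^n))$, hence uniformly integrable, and a truncation argument gives convergence of the right-hand side. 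The left-hand side is the delicate term and constitutes \textbf{the main obstacle}, because $(t,\gamma)\mapsto\dot\gamma(t)$ is not continuous for the topology of $\Gamma$ (the pointwise-in-time velocity is only weakly $W^{1,2}$-measurable). I would circumvent this by integrating in time before taking the limit: the left-hand side equals $\int_\Gamma F\,\eta^n(d\gamma)$ with $F(\gamma):=\int_0^T\phi(t,\gamma)\dot\gamma(t)\,dt$, and $F$ is continuous for the weak $W^{1,2}$-topology, since if $\gamma_k\to\gamma$ in $L^2$ with $\dot\gamma_k\rightharpoonup\dot\gamma$ weakly in $L^2$, then $\phi(\cdot,\gamma_k)\to\phi(\cdot,\gamma)$ strongly in $L^2(0,T)$ by dominated convergence while $\dot\gamma_k\rightharpoonup\dot\gamma$, so the strong–weak pairing converges. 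Its truncations $F\wedge R\vee(-R)$ are bounded and weakly $W^{1,2}$-continuous, so Lemma \ref{Lem:conti_weak_cont} applies to them; as $|F(\gamma)|\le\|\phi\|_\infty\sqrt T\,\|\dot\gamma\|_{L^2}$ is bounded in $L^2(\eta^n)$, the truncation error is controlled uniformly in $n$ and $\int F\,\eta^n\to\int F\,\eta$. Passing to the limit we obtain, for every bounded continuous $\phi(t,\gamma)$,
\begin{equation*}
\int\phi(t,\gamma)\,\dot\gamma(t)\,\mu(dtd\gamma d\omega)=\int\phi(t,\gamma)\,b(\omega)\,\mu(dtd\gamma d\omega),
\end{equation*}
which, disintegrating $\mu=\mu_{t,\gamma}(d\omega)\,(\mathcal L^1\times\eta)(dtd\gamma)$ and using the arbitrariness of $\phi$, is equivalent to the a.e. identity $\int b(\omega)\,\mu_{t,\gamma}(d\omega)=\dot\gamma(t)$.

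Finally I would combine this a.e.\ identity with the consistency relation \eqref{eq:test3} to recover \eqref{eq:diss} for $\eta$. Integrating the continuous function $W_0$ against \eqref{eq:test3} and recalling $\int W_0\,\omega^t_{\gamma'}(d\gamma)=V_t(\gamma')$ gives, for $\mathcal L^1$-a.e. $t$ and ${T_t}_\#\eta$-a.e. $\gamma'$,
\begin{equation*}
V_t(\gamma')=\int\Bigl(\int b(\omega)\,\mu_{t,\gamma''}(d\omega)\Bigr)\,\omega^t_{\gamma'}(d\gamma'').
\end{equation*}
By the identity of the previous paragraph the inner integral equals $\dot{\gamma''}(t)$, whence $V_t(\gamma')=\int\dot{\gamma''}(t)\,\omega^t_{\gamma'}(d\gamma'')$; but by Lemma \ref{Lem:dot_gamma_eq_proje} the right-hand side is exactly $\dot\gamma(t)$ for any $\gamma\in T_t^{-1}(\gamma')$. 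Therefore $\dot\gamma(t)=V_t(\gamma)$ for $\mathcal L^1\times\eta$-a.e.\ $(t,\gamma)$, i.e.\ $\eta$ is a dissipative solution. The only genuinely technical point is the time-integration together with the uniform-integrability estimate used to pass the velocity term to the limit; everything else is bookkeeping of the disintegration identities \eqref{eq:test3} and Lemma \ref{Lem:dot_gamma_eq_proje} already at our disposal.
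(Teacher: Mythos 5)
Your proposal is correct and follows essentially the same route as the paper: pass to the limit of the Young measures $\mu(\eta^n)$ in $\mathcal D$, obtain the a.e.\ identity $\dot\gamma(t)=\int\bigl(\int W_0(\gamma')\omega(d\gamma')\bigr)\mu_{t,\gamma}(d\omega)$ by testing with $\phi(t,\gamma)$ and using Lemma \ref{Lem:conti_weak_cont} on the time-integrated velocity term, and then combine it with \eqref{eq:test3} and Lemma \ref{Lem:dot_gamma_eq_proje} to recover \eqref{eq:diss}. The only difference is that you make explicit the truncation and uniform-integrability arguments needed because $W_0$ and $\dot\gamma$ are unbounded, details the paper leaves implicit.
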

	
	\begin{proof}
		Let $\phi:[0,T]\times\Gamma\to\R$ be a continuous function. 
		
		On the one hand, as $n\to\infty$
		\[
		\int\phi(t,\gamma)\dot{\gamma}(t)\eta^n(d\gamma)dt\to\int\phi(t,\gamma)\dot{\gamma}(t)\eta(d\gamma)dt.
		\]
		
	Indeed, $t \mapsto \int \phi(t,\gamma) \dot \gamma(t)dt$ is continuous in $\Gamma$ {w.r.t. the weak convergence, and then one applies Lemma \ref{Lem:conti_weak_cont}.}
		
		On the other hand,
		
	\begin{align}
		\int\phi(t,\gamma)\dot{\gamma}(t)\eta^n(d\gamma)\dt&=\int\int\phi(t,\gamma)\int W_0(\gamma')\omega^{n,t}_{T_t(\gamma)}(d\gamma')\eta^n(d\gamma)dt\notag\\
		&=\int\int\phi(t,\gamma)\int W_0(\gamma')\omega(d\gamma')\mu(\eta^n)_{t,\gamma}(d\omega)\eta^n(d\gamma)dt\notag\\
		&\to\int\int\phi(t,\gamma)\int W_0(\gamma')\omega(d\gamma')\mu_{t,\gamma}(d\omega)\eta(d\gamma)dt \quad\text{ as $n\to\infty$}.
	\end{align}
	Hence,
	\begin{equation}\label{eq:test4}
	\dot{\gamma}(t)=\int\int W_0(\gamma')\omega(d\gamma')\mu_{t,\gamma}(d\omega)\quad\mathcal L^1\times\eta{\text{-a.e. $(t,\gamma)$}}.
	\end{equation}
	Integrating \eqref{eq:test4} w.r.t. $\omega^t_{\gamma''}$ and then  using \eqref{eq:test3} one gets
	\begin{align}
	\int	\dot{\gamma}(t)\omega^t_{\gamma''}(d\gamma)&=\int\int\int W_0(\gamma')\omega(d\gamma')\mu_{t,\gamma}(d\omega)\omega^t_{\gamma''}(d\gamma)\notag\\
	&=\int W_0(\gamma)\omega^t_{\gamma''}(d\gamma).
	\end{align}
	Finally, since $\dot{\gamma}(t)$ is constant on $T_t^{-1}(\gamma'')$, we deduce that
	\[
	\dot{\gamma}(t)=\int W_0(\gamma')\omega^t_{T_t(\gamma)}(d\gamma') \quad\mathcal L^1\times\eta\text{-a.e.}\ (t,\gamma),
	\]
	namely that $\eta$ is a dissipative solution.
	\end{proof}

We have therefore proved the following 

\begin{theorem}\label{thm:sticky}
	The set of dissipative solutions in $\mathcal M(\Gamma)$ is compact.
\end{theorem}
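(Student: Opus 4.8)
The plan is to deduce compactness from two ingredients that are already in place: the compactness and metrizability of the ambient space $\mathcal M(\Gamma)$, and the closure of the set of dissipative solutions under narrow convergence. By Corollary \ref{Cor:calM_compact_metric}, the space $\mathcal M(\Gamma)$ endowed with the narrow topology is compact and metrizable, with metric $d_{\mathcal M(\Gamma)}$. Consequently, to show that the subset of dissipative solutions is compact it suffices to show that this subset is closed; and in a metrizable space closedness coincides with sequential closedness, so it is enough to argue along sequences.

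First I would invoke the Proposition established immediately above, which asserts precisely that whenever $\{\eta^n\}\subset\mathcal M(\Gamma)$ is a sequence of dissipative solutions with $\eta^n\rightharpoonup\eta$ for some $\eta\in\mathcal M(\Gamma)$, the limit $\eta$ is again a dissipative solution. This is exactly the statement that the set of dissipative solutions is sequentially closed inside $\mathcal M(\Gamma)$. Since $(\mathcal M(\Gamma),d_{\mathcal M(\Gamma)})$ is metrizable, sequential closedness upgrades to closedness: the dissipative solutions form a closed subset of $\mathcal M(\Gamma)$. As a closed subset of a compact space is itself compact, the conclusion follows at once.

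In effect there is no remaining obstacle at this stage, because the genuine difficulty has already been absorbed into the preceding Proposition. That difficulty is the passage to the limit in the defining identity $\dot\gamma(t)=V_t(\gamma)$ under narrow convergence, which cannot be carried out directly on $\mathcal M(\Gamma)$ since $V_t$ involves the disintegrations $\omega^t_{T_t(\gamma)}$ in a way that is not continuous under $\rightharpoonup$. The resolution adopted there was to embed the disintegrations into the compact space $\mathcal D$ of Young-type measures, to extract a limit point $\mu$ satisfying the consistency relation \eqref{eq:test3}, and to exploit the lower semicontinuity of $\omega\mapsto\int\|\dot{\gamma'}\|_2^2\,\omega(d\gamma')$ to stay within $\mathcal D$. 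With that machinery in hand, the only point requiring care in the present theorem is the elementary metric-topology remark that in the metrizable compact space $\mathcal M(\Gamma)$ one may verify closedness along sequences rather than general nets, which is precisely what Corollary \ref{Cor:calM_compact_metric} licenses.
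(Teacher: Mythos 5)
Your proposal is correct and follows exactly the paper's (implicit) argument: the paper states the theorem immediately after the proposition on weak limits of dissipative solutions with the words ``We have therefore proved the following,'' relying precisely on the compact metrizability of $\mathcal M(\Gamma)$ from Corollary \ref{Cor:calM_compact_metric} together with the sequential closedness established in that proposition. You have merely made explicit the standard topological step (closed subset of a compact metrizable space is compact) that the paper leaves to the reader, and you correctly locate the real work in the Young-measure argument of the preceding proposition.
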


\begin{remark}
	Notice instead that the set of sticky particle solutions in $\mathcal M{(\Gamma)}$ is not closed. Take for example the free flow of two particles which do not interact up to time $T$ and change their directions so that in the limit their trajectories intersect for some time $t\in(0,T)$. Or, even worse, see the Example 4 in \cite{BN}, where the only solution is the trivial one {(i.e. the free flow)} with zero dissipation (not sticky), as limit of sticky particle solutions for the initial data obtained removing the particles in a smaller and smaller neighbourhood of the origin. 
	
	Therefore {it is justified} the necessity to consider the notion of dissipative solution for general initial data. 
\end{remark}

\subsection{Properties of dissipative solutions}

\begin{corollary}\label{cor:BV}
Let $\eta\in\mathcal M(\Gamma)$ be a dissipative solution. Then,	the map $t\mapsto V_t$ belongs to $BV^{1/2}([0,T];L^2_\eta(\Gamma;\R^d))$ and it is right continuous.

\end{corollary}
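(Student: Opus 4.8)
The plan is to read everything off the quadratic dissipation balance \eqref{eq:squarediss} together with the fact that the restriction maps $T_t$ generate a right-continuous filtration. Throughout set $e(t):=\int|V_t(\gamma)|^2\eta(d\gamma)=\|V_t\|_{L^2_\eta}^2$; by Proposition \ref{prop:diss} (case $\Psi=|\cdot|^2$) this is nonincreasing on $[0,T]$, and since $\eta\in\mathcal M(\Gamma)$ it is bounded, $e(0)\le\|W_0\|_{L^2_\eta}^2\le1$. For the $BV^{1/2}$ bound, recall that membership of $t\mapsto V_t$ in $BV^{1/2}([0,T];L^2_\eta)$ amounts to the finiteness of $\sup\sum_{i}\|V_{t_i}-V_{t_{i-1}}\|_{L^2_\eta}^2$ over finite partitions $0\le t_0<\dots<t_N\le T$. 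Applying \eqref{eq:squarediss} to each consecutive pair $t_{i-1}\le t_i$ gives $\|V_{t_{i-1}}-V_{t_i}\|_{L^2_\eta}^2=e(t_{i-1})-e(t_i)$, so the sum telescopes to $e(t_0)-e(t_N)\le e(0)\le1$, uniformly in the partition; hence the $BV^{1/2}$-seminorm of $t\mapsto V_t$ is controlled by the total dissipation.

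For right-continuity I would first extract a limit and reduce to a scalar question. For $t_0\le t'\le t$, \eqref{eq:squarediss} gives $\|V_{t'}-V_t\|_{L^2_\eta}^2=e(t')-e(t)$; as $e$ is monotone it admits a right limit $e(t_0^+)$, so $\|V_{t'}-V_t\|_{L^2_\eta}^2\to0$ as $t',t\searrow t_0$ and $V_t$ converges in $L^2_\eta$ to some $\bar V_{t_0}$. Taking $s=t_0$ in \eqref{eq:squarediss} and passing to the limit shows moreover that $\|V_{t_0}-\bar V_{t_0}\|_{L^2_\eta}^2=e(t_0)-e(t_0^+)$, so right-continuity of $t\mapsto V_t$ at $t_0$ is \emph{equivalent} to right-continuity of the energy $e$ at $t_0$, i.e. to ruling out an instantaneous energy drop $e(t_0)>e(t_0^+)$.

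This last point is the crux and the main obstacle, and here the martingale structure of the problem enters. Relation \eqref{eq:rhoomega} says precisely that, for $t_0<t$, $V_t(\gamma)=\int V_{t_0}(\gamma')\,\omega^t_{T_t(\gamma)}(d\gamma')$, i.e. $V_t$ is the conditional expectation of $V_{t_0}$ with respect to $\mathcal F_t:=\sigma(T_t)$, the $\sigma$-algebras being decreasing in $t$, while \eqref{eq:squarediss} is exactly the orthogonality of the increments. Since each map $\gamma\mapsto\int(\cdot)\,\omega^t_{T_t(\gamma)}$ is an $L^2_\eta$-contraction (Jensen), it suffices to show that $V_{t_0}$ lies in the $L^2_\eta$-closure of $\bigcup_{t>t_0}L^2(\mathcal F_t)$: approximating $V_{t_0}$ within $\varepsilon$ by some $\mathcal F_{t_1}$-measurable $g$ with $t_1>t_0$, and noting that $g$ is fixed by $\gamma\mapsto\int(\cdot)\,\omega^t_{T_t(\gamma)}$ for $t\le t_1$ (because $\mathcal F_{t_1}\subseteq\mathcal F_t$), the contraction property yields $\|V_{t_0}-V_t\|_{L^2_\eta}\le2\varepsilon$ for all $t\in(t_0,t_1]$, hence $\bar V_{t_0}=V_{t_0}$. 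The one genuinely nontrivial ingredient is therefore the right-continuity of the filtration, $\mathcal F_{t_0}=\bigvee_{t>t_0}\mathcal F_t$: this holds because $\gamma\in W^{1,2}\hookrightarrow C^0$ and $(t_0,T)=\bigcup_{t>t_0}(t,T)$, so $T_{t_0}(\gamma)=\gamma\llcorner_{(t_0,T)}$ is a measurable function of the family $\big(T_t(\gamma)\big)_{t>t_0}$ (say along $t=t_0+1/n$), making $V_{t_0}$ measurable with respect to $\bigvee_{t>t_0}\mathcal F_t$. Combining this with the Cauchy step gives $e(t_0^+)=\|\bar V_{t_0}\|_{L^2_\eta}^2=\|V_{t_0}\|_{L^2_\eta}^2=e(t_0)$, so $V_t\to V_{t_0}$ in $L^2_\eta$ as $t\searrow t_0$, which is the asserted right-continuity.
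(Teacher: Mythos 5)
Your proof is correct, and for the right-continuity half it takes a genuinely different route from the paper. The $BV^{1/2}$ bound is obtained exactly as in the paper, by telescoping the Pythagoras identity \eqref{eq:squarediss}. For the right continuity, the paper first extracts the right limit $\hat V_{\bar t}$ from the $BV^{1/2}$ property, observes that it is measurable w.r.t. $\sigma(T_{\bar t})$, and then identifies it with $V_{\bar t}$ by testing the martingale identity $\int V_s\,\tilde\phi(T_t(\gamma))\,\eta(d\gamma)=\int W_0\,\tilde\phi(T_t(\gamma))\,\eta(d\gamma)$ against explicit cylindrical test functions $\tilde\phi\circ T_t$ (built via the embedding $W^{1,2}\hookrightarrow C^0$) and letting $t\searrow\bar t$ by bounded convergence. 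You instead make the martingale structure fully explicit: $V_t=E[V_{t_0}\,|\,\sigma(T_t)]$ by \eqref{eq:rhoomega}, the reverse filtration is right continuous in the sense $\sigma(T_{t_0})=\bigvee_{t>t_0}\sigma(T_t)$ (since $\langle T_{t_0}\gamma,\psi\rangle=\lim_n\langle T_{t_0+1/n}\gamma,\psi\rangle$ for $\psi\in L^2$), and the $L^2$-contraction property of conditional expectation applied to an $\eps$-approximation $g$ of $V_{t_0}$ measurable w.r.t. some $\sigma(T_{t_1})$, $t_1>t_0$, forces $\|V_t-V_{t_0}\|_{L^2_\eta}\leq 2\eps$ on $(t_0,t_1]$; the reformulation of right continuity as absence of an instantaneous energy drop is a clean byproduct of \eqref{eq:squarediss}. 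Both arguments rest on the same underlying fact --- that $\sigma(T_{\bar t})$ is generated by the $\sigma(T_t)$, $t>\bar t$ --- but your abstract density argument ($\bigcup_{t>t_0}L^2(\sigma(T_t))$ dense in $L^2(\sigma(T_{t_0}))$, by the standard approximation lemma for the algebra $\bigcup_{t>t_0}\sigma(T_t)$) replaces the paper's concrete test-function computation, and is in fact slightly more economical: the embedding $W^{1,2}\hookrightarrow C^0$ that you mention is not actually needed for your filtration step, since $\gamma\llcorner_{(t_0,T)}$ is already determined as an $L^2$ element by the restrictions $\gamma\llcorner_{(t_0+1/n,T)}$.
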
	

\begin{proof}
	Let $0=t_0\leq t_1\leq\dots\leq t_N=T$ be any finite partition of $[0,T]$. Then, by \eqref{eq:squarediss},
	\begin{equation}
	\sum_{i=0}^{N-1}\|V_{t_{i+1}}-V_{t_i}\|_{L^2_\eta}^2=\|W_0\|_{L^2_\eta}^2-\|V_T\|_{L^2_\eta}^2\leq\|W_0\|_{L^2_\eta}^2.
	\end{equation}
	
	As for the right continuity, the $BV^{1/2}([0,T];L^2_\eta(\Gamma;\R^d))$-property implies that for all $\bar t$ there exists the limit
	\begin{equation*}
	\lim_{t \searrow \bar t} V_t = \hat V_{\bar t},
	\end{equation*}
	and that it coincides with $V_{\bar t}$ for $\mathcal L^1$-a.e. $\bar t \in [0,T]$. Indeed, since all $\sigma$-algebras generated by $T_{t}$, $t \geq \bar t$ are contained in the one generated by $T_{\bar t}$, it follows that $\hat V_{\bar t}$ is measurable w.r.t. the $\sigma$-algebra of generated by $T_{\bar t}$. Now for every $t > \bar t$ and for every continuous function $\phi : \Gamma_{\bar t} \to \R$ the function
	\begin{equation*}
	\tilde \phi(T_{ t}(\gamma)) = \phi \big( \gamma(t) \chi_{[\bar t,t)} + T_{t}(\gamma)\chi_{[t,T]} \big)
	\end{equation*}
	is continuous and for all $\bar t \leq s \leq t$
	\begin{equation*}
	\int V_s(\gamma) \tilde \phi(T_{ t}(\gamma)) \eta(d\gamma) = \int V_t(\gamma) \tilde \phi(T_{t}(\gamma)) \eta(d\gamma),
	\end{equation*}
	because $\tilde \phi$ depends only on $T_t(\gamma)$. In particular it is constant in $[\bar t,t]$. Hence we conclude that by taking ${s} \searrow \bar t$
	\begin{equation*}
	\int \hat V_{\bar t}(\gamma) \tilde \phi(T_{t}(\gamma)) \eta(d\gamma) = \int V_t(\gamma) \tilde \phi(T_{\bar t}(\gamma)) \eta(d\gamma) = \int W_0(\gamma) \tilde \phi(T_{ t}(\gamma)) \eta(d\gamma),
	\end{equation*}
	and since as $t \searrow \bar t$ the function $\tilde \phi\circ T_{t}$ converges pointwise to $\phi\circ T_{\bar t}$, it follows that
	\begin{equation*}
	\int \hat V_{\bar t}(\gamma) \phi(T_{\bar t}(\gamma)) \eta(d\gamma) = \int W_0(\gamma) \phi(T_{\bar t}(\gamma)) \eta(d\gamma),
	\end{equation*}
	which implies $\hat V_{\bar t} = V_{\bar t}$ being $\phi$ arbitrary.
\end{proof}

\section{Approximations of dissipative solutions}\label{sec:approx}

The goal of this section is to prove Theorem \ref{thm:stickydense}, namely that dissipative solutions can be approximated by finite sticky particle solutions, which are sticky particle  solutions concentrated on a finite number of trajectories. 

We will obtain this result in four steps. In the first step we will approximate a dissipative solution by a dissipative solution whose velocity field is finitely piecewise constant in time ({called} discrete in time dissipative solution). Then we will show that discrete in time dissipative solutions can be approximated by dissipative countable particle solutions, namely dissipative solutions concentrated on a countable number of disjoint trajectories. Then we approximate dissipative countable particle solutions with dissipative finite particle solutions. Finally, we approximate dissipative finite particle solutions with finite sticky particle solutions. In each of the first three steps we will apply a general procedure which is resumed in Lemma \ref{lemma:discrete_sticky}. We start with the following definitions.  

\begin{definition}[Discrete in time dissipative solutions] 
	
	A dissipative solution $\eta\in\mathcal M(\Gamma)$ is discrete in time if there exists a finite partition $0=t_0<t_1<\dots<t_N=T$ such that $V_t=V_{t_i}$ for all $t\in[t_i,t_{i+1})$.
	
\end{definition}

\begin{definition}\label{def:countsticky}
	A dissipative countable particle solution is a discrete in time dissipative solution $\eta$ with the property that there exist{s} a countable number of trajectories $\{\gamma_n\}_{n\in\N}\subset\Gamma$ such that $\dot\gamma_n(t)=\dot{\gamma}_n(t_i)$ if $t\in[t_i,t_{i+1})$, $\inf_{n\neq m}|\gamma_n(T)-\gamma_m(T)|>0$  and $\eta(\cup_n\{\gamma_n\})=1$.
\end{definition}

\begin{definition}\label{def:finitesticky}
	A dissipative finite particle solution is a discrete in time dissipative solution $\eta$ with the property that there exist{s} a finite number of trajectories $\{\gamma_n\}_{n=1,\dots,N}\subset\Gamma$ such that $\dot\gamma_n(t)=\dot{\gamma}_n(t_i)$ if $t\in[t_i,t_{i+1})$ and  $\eta(\cup_{n=1}^N\{\gamma_n\})=1$. 
\end{definition}

In  the next lemma we exhibit a general procedure which allows to find, given a measure $\eta\in\mathcal M(\Gamma)$ (not necessarily dissipative) and a finite number of descending in time equivalence relations on $\Gamma$, a discrete in time dissipative solution.

Such a procedure  will be used several times in the following subsections to be able to approximate general  dissipative solutions with dissipative finite particle solutions.

\begin{lemma}\label{lemma:discrete_sticky}
Let $\eta\in\mathcal M(\Gamma)$, let $0=t_0<t_1<\dots<t_N=T$ be a finite partition of $[0,T]$, let $\{E_{t_i}\}_{i=0,\dots,N}$ be a family of Borel equivalence relations on $\Gamma$ with partition maps $G_{t_i}:\Gamma\to [0,1]$ such that 
\begin{equation}\label{eq:gti}
G_{t_i}(\gamma)=G_{t_i}(\gamma')\quad\Rightarrow\quad G_{t_j}(\gamma)=G_{t_j}(\gamma')\quad\forall\,j>i,\,\gamma,\gamma'\in\Gamma,
\end{equation}
or equivalently $E_{t_i}\subset E_{t_j}$ {as graphs for $t_i\leq t_j$}, and let $\{y_{T,\alpha}\}_{\alpha \in E_{T}}\subset\R^d$.
Let $\eta(d\gamma)=\int\omega^{t_i}_{y}(d\gamma){G_{t_i}}_{\#}\eta(y)$ be the disintegrations w.r.t. the partitions $\{E_{t_i}\}$, $V_{t_i}(\gamma)=\int W_0(\gamma')\omega^{t_i}_{G_{t_i}(\gamma)}(d\gamma')$ and $\bar V_t=V_{t_i}$ if $t\in[t_i,t_{i+1})$, $\bar V_t=\bar V_0$ if $t\in (-1,0)$.

Define 
\[
\tilde F:\Gamma\to\Gamma, \quad \tilde F(\gamma)=y_{T,G_T(\gamma)}-\int_t^T\bar V_s(\gamma)ds.
\]

Then, the measure $\tilde{\eta}=\tilde F_{\#}\eta$ is sticky.
\end{lemma}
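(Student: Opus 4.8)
The plan is to first verify that $\tilde F$ produces admissible curves and that $\tilde\eta := \tilde F_{\#}\eta$ is a well-defined element of $\mathcal M(\Gamma)$, and then to read off the sticky particle property from the explicit piecewise affine structure of the curves $\tilde F(\gamma)$. Since $\bar V_s(\gamma)$ is piecewise constant in $s$ and equals $\bar V_0(\gamma)=V_0(\gamma)$ on $(-1,0)$, the curve $t\mapsto \tilde F(\gamma)(t)=y_{T,G_T(\gamma)}-\int_t^T\bar V_s(\gamma)\,ds$ is affine on $(-1,0)$, piecewise affine on $[0,T]$, lies in $W^{1,2}((-1,T),\R^d)$, and satisfies $\tfrac{d}{dt}\tilde F(\gamma)=\bar V_t(\gamma)$ together with $W_0(\tilde F(\gamma))=V_0(\gamma)$. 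Each $V_{t_i}$ is by definition the conditional average of $W_0$ with respect to the $\sigma$-algebra generated by $G_{t_i}$, so Jensen's inequality (as already used in Proposition \ref{prop:diss}) controls the $L^2_\eta$-norms of the velocities $\bar V_t$ by that of $W_0$; the velocity being a conditional average does not increase energy, and together with the control of $\tilde F(\gamma)(0)$ this places $\tilde\eta$ in $\mathcal M(\Gamma)$.

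The crucial structural point is that, by the nesting hypothesis \eqref{eq:gti}, $G_{t_i}(\gamma)=G_{t_i}(\gamma')$ forces $G_{t_j}(\gamma)=G_{t_j}(\gamma')$, and hence $V_{t_j}(\gamma)=V_{t_j}(\gamma')$, for every $j\geq i$, as well as $G_T(\gamma)=G_T(\gamma')$. Consequently $\tilde F(\gamma)\llcorner_{[t_i,T]}$ depends only on the $E_{t_i}$-class of $\gamma$: two $E_{t_i}$-equivalent curves share the endpoint $y_{T,G_T}$ and the velocity $\bar V_s$ on $[t_i,T]$, so they coincide on $[t_i,T]$. Writing $\Phi_i$ for the resulting map from $E_{t_i}$-classes to positions, one has $\tilde F(\gamma)(t_i)=\Phi_i(G_{t_i}(\gamma))$, so $e_{t_i}\circ\tilde F$ factors through $G_{t_i}$, and on each slab $[t_i,t_{i+1})$ the curve is the affine motion with class-constant slope $V_{t_i}$ arriving at $\tilde F(\gamma)(t_{i+1})$.

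To conclude stickiness it suffices, by definition, to produce a set $S$ of full $\tilde\eta$-measure on which $T_t$ and $e_t$ induce the same equivalence relation for every $t$. Since the curves are continuous ($W^{1,2}\hookrightarrow C^0$), coincidence on $(t,T)$ always propagates to coincidence at $t$, so $T_t$-equivalence implies $e_t$-equivalence automatically; the entire content is the converse, namely that $\tilde F(\gamma)(t)=\tilde F(\gamma')(t)$ forces coincidence on all of $[t,T]$. At a node $t=t_i$ this says exactly that the position $\Phi_i$ determines the future trajectory: this holds once one knows that distinct classes that have not yet merged occupy distinct positions at $t_i$, whereas classes sharing a position at $t_i$ share the future velocities and endpoint by the previous paragraph. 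For interior $t\in(t_i,t_{i+1})$ one uses that the two affine pieces with class-constant slopes either coincide throughout the slab or meet in at most one point, and one must show that any such meeting is a genuine merging that propagates to $[t,T]$.

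The main obstacle is precisely this exclusion of transversal crossings at interior times: a priori two curves with different final positions could cross inside a slab, which would make them $e_t$-equivalent but not $T_t$-equivalent and so destroy stickiness. This is where the construction must use more than the bare nesting of the $E_{t_i}$, namely the momentum-balancing, monotone nature of the conditional-average velocities $V_{t_i}$ and the compatibility of the prescribed endpoints $\{y_{T,\alpha}\}$ with the descending filtration, to guarantee that trajectories of distinct non-merging classes stay separated and touch only at the times at which their classes actually merge. Once crossings are ruled out, the partitions induced by $e_t$ and $T_t$ coincide for every $t$ on a full-measure set, and $\tilde\eta$ is sticky.
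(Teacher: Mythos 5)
There is a genuine gap, and it originates in the reading of the conclusion. Despite the word ``sticky'' in the statement, what the paper proves here (and what is used in Propositions \ref{prop:discretesticky}, \ref{prop:countsticky}, \ref{prop:finitesticky}) is that $\tilde\eta$ is a \emph{dissipative} solution in the sense of Definition \ref{def:sticky}, i.e. that $\frac{d}{dt}\tilde F(\gamma)(t)=\tilde V_t(\tilde F(\gamma))$ for a.e.\ $(t,\gamma)$, where $\tilde V_t$ is the conditional average of $W_0$ under the disintegration of $\tilde\eta$ w.r.t.\ the restriction maps $T_t$. The sticky particle property you set out to prove (that $T_t$ and $e_t$ induce the same equivalence relation for all $t$) is in general \emph{false} under the hypotheses of the lemma: the $E_{t_i}$ are arbitrary nested Borel equivalence relations and the endpoints $y_{T,\alpha}$ are arbitrary points of $\R^d$, so nothing prevents two non-merging classes from crossing transversally at an interior time of a slab $(t_i,t_{i+1})$ --- already with $N=1$ and two atoms with distinct endpoints one produces such a crossing. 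You correctly identify this obstruction, but the way you dispose of it --- invoking a ``momentum-balancing, monotone nature'' of the $V_{t_i}$ and a ``compatibility of the endpoints with the filtration'' --- appeals to properties that are not hypotheses of the lemma and do not hold; in one space dimension monotonicity arguments of this kind exist, but here they are unavailable, and indeed genuinely sticky (non-crossing) approximations are only obtained later, in Theorem \ref{thm:stickydense}, by perturbing the velocities using $d\geq2$.

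Even under the correct (dissipative) reading, your argument stops short of the actual verification. Your structural observations are the right starting point: $\tilde F(\gamma)\llcorner_{[t_i,T]}$ depends only on $G_{t_i}(\gamma)$ by \eqref{eq:gti}, so the partition induced by $G_{t_i}$ refines the one induced by $T_{t_i}\circ\tilde F$, and on $[t_i,t_{i+1})$ the derivative is the class-constant $\bar V_{t_i}(\gamma)$. But the refinement can be strict (distinct $G_{t_i}$-classes may be sent to the same future trajectory), so one still has to show that averaging $W_0$ over the coarser $T_{t_i}$-classes of $\tilde\eta$ returns the same value $\bar V_{t_i}(\gamma)$. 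This is the content of the identity $\tilde\omega^{t_i}_{T_{t_i}\circ\tilde F(\gamma)}=\int\tilde F_{\#}\omega^{t_i}_{y}\,\nu^{t_i}_{T_{t_i}\circ\tilde F(\gamma)}(dy)$ in the paper's proof, combined with Remark \ref{rem:v0} (which identifies $W_0(\tilde F(\gamma))$ with $\bar V_0(\gamma)$, itself already an average) and the fact that $\int W_0(\gamma')\omega^{t_i}_{(\cdot)}(d\gamma')$ is constant on the fibers of the factorization $T_{t_i}\circ\tilde F=F\circ G_{t_i}$; none of this appears in your proposal, and without it the defining relation \eqref{eq:diss} for $\tilde\eta$ is not established.
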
	

\begin{remark}\label{rem:v0}
	Observe that $W_0(\tilde F(\gamma))=\int W_0(\gamma')\omega^0_{G_0(\gamma)}(d\gamma')$ for all $\gamma\in\Gamma$. Indeed, 
	\[
	W_0(\tilde F(\gamma))=\tilde F(\gamma)(0)-\tilde F(\gamma)(-1)=\int_{-1}^0\bar V_s(\gamma)ds=\bar V_0(\gamma)=\int W_0(\gamma')\omega^0_{G_0(\gamma)}(d\gamma').
	\]
\end{remark}

\begin{proof}

	As a preliminary step, notice that, by construction, the partition induced on $\Gamma$ by $G_{t_i}$ is a refinement of the one induced by $T_{t_i}\circ \tilde F$. Indeed, if for some $\gamma'$
	\[
	\int W_0(\gamma'')\omega^{t_j}_{G_{t_j}(\gamma')}(d\gamma'')=\int W_0(\gamma'')\omega^{t_j}_{G_{t_j}(\gamma)}(d\gamma''),\quad\forall\,j\geq i,
	\]
	then $T_{t_i}\circ \tilde F(\gamma)=T_{t_i}\circ \tilde F(\gamma')$ even if possibly $G_{t_i}(\gamma)\neq G_{t_i}(\gamma')$.
	
	Therefore there exists a Borel map, say $F {: [0,1] \to \Gamma_{t_i}}$, such that $T_{t_i}\circ \tilde F=F\circ G_{t_i}$ and one has the disintegrations  
	\begin{align}
	\eta(d\gamma)&=\int\omega^{t_i}_{y}(d\gamma){G_{t_i}}_{\#}\eta(dy)\notag\\
	&=\int\omega^{t_i}_{y}(d\gamma)\nu^{t_i}_{\tilde{\gamma}}(dy){T_{t_i}}_{\#}\tilde F_{\#}\eta(d\tilde{\gamma}).
	\end{align}
	
	In particular, applying $\tilde F_{\#}$ to both sides
		\begin{align}
	\tilde F_{\#}\eta(d\tilde\gamma)
	&=\int\tilde F_{\#}\omega^{t_i}_{y}(d\tilde\gamma)\nu^{t_i}_{\tilde{\gamma}'}(dy){T_{t_i}}_{\#}\tilde F_{\#}\eta(d\tilde{\gamma}').
	\end{align}
	
	On the other hand, let $\tilde{\omega}^{t_i}_{\tilde \gamma}$ be the disintegration of $\tilde{\eta}=\tilde F_{\#}\eta$ w.r.t. the restriction map $T_{t_i}$. Then,
	\begin{equation}
		\tilde F_{\#}\eta(d\tilde\gamma)
	=\int\tilde\omega^{t_i}_{\tilde{\gamma}'}(d\tilde\gamma){T_{t_i}}_{\#}\tilde F_{\#}\eta(d\tilde{\gamma}').
	\end{equation}
	
	Hence,
	
	\begin{equation}\label{eq:rhopush}
	\tilde \omega^{t_i}_{T_{t_i}\circ\tilde F(\gamma)}(d\tilde{\gamma}')=\int\tilde F_{\#}\omega^{t_i}_{y}(d\tilde\gamma')\nu^{t_i}_{T_{t_i}\circ \tilde F(\gamma)}(dy).
	\end{equation}

Let us now check the dissipation condition for $\tilde \eta$ first when $t=t_i$ for some $i\in\{0,\dots,N\}$, considering here the right derivative because of Corollary \ref{cor:BV}.
Recall that the vector field associated to $\tilde\eta$ is given by
\[
\tilde V_t(\tilde{\gamma})=\int W_0(\tilde{\gamma}')\tilde{\omega}^{t_i}_{T_{t_i}(\tilde{\gamma})}(d\tilde{\gamma}'), \qquad {t \in [t_i,t_{i+1})}.
\]

One has that 

\begin{align}
{\dot{(\widetilde{F}(\gamma))}}(t_i)&=\bar V_{t_i}(\gamma)=\int W_0({\gamma}')\omega^{t_i}_{ G_{t_i}(\gamma)}(d\gamma')\notag\\
&=\int W_{0}(\gamma')\omega^{t_i}_{y}(d\gamma')\nu^{t_i}_{T_{t_i}\circ \tilde F(\gamma)}(dy),
\end{align}
where in the last equality we have used the fact $\int W_{0}(\gamma')\omega^{t_i}_{(\cdot)}(d\gamma')$ is constant on $F^{-1}(T_{t_i}\circ\tilde F(\gamma))$.

Finally, by Remark \ref{rem:v0} and \eqref{eq:rhopush}
\begin{align}
{\dot{(\widetilde{F}({\gamma}))}}(t_i)&=\int W_{0}(\gamma')\omega^{t_i}_{y}(d\gamma')\nu^{t_i}_{T_{t_i}\circ \tilde F(\gamma)}(dy)\notag\\
&=\int W_0(\tilde{\gamma}')\tilde F_{\#}\omega^{t_i}_{y}(d\gamma')\nu^{t_i}_{T_{t_i}\circ \tilde F(\gamma)}(dy)\notag\\
&=\int W_0(\tilde{\gamma}')\tilde \omega^{t_i}_{T_{t_i}\circ\tilde F(\gamma)}(d\tilde{\gamma}')\notag\\
&=\tilde V_{t_i}(\tilde F(\gamma)).
\end{align} 

Let us now assume $t\in[t_i,t_{i+1})$. Then, 
\[
{\dot{(\widetilde F(\gamma))}}(t)=\bar V_t(\gamma)=\bar V_{t_i}(\gamma)=\tilde V_{t_i}(\tilde F(\gamma)).
\]

Let us now prove that for $t\in[t_i,t_{i+1})$ $\tilde V_t(\tilde F(\gamma))=\tilde V_{t_i}(\tilde F(\gamma))$.

This follows from the following observation
\begin{equation}\label{eq:tti}
T_t\circ\tilde F(\gamma)=T_t\circ\tilde F(\gamma')\quad\Rightarrow\quad T_{t_i}\circ \tilde F(\gamma)=T_{t_i}\circ \tilde F(\gamma'),
\end{equation}
which can be easily checked from the definition of $\tilde F$. 

From \eqref{eq:tti} one has that $\tilde \omega^t_{T_t\circ\tilde F(\gamma)}(d\tilde{\gamma})=\tilde \omega^{t_i}_{T_{t_i}\circ\tilde F(\gamma)}(d\tilde{\gamma})$, hence $\tilde V_t(\tilde F(\gamma))=\tilde V_{t_i}(\tilde F(\gamma))$.
\end{proof}

\subsection{Discrete in time  approximation}

We now apply the general construction of Lemma \ref{lemma:discrete_sticky} in order to prove that, given a dissipative solution $\eta$, there exists a discrete in time dissipative solution  whose vector field is close to the vector field of $\eta$ at any time in the $L^2_{\eta}$ topology.

\begin{proposition}\label{prop:discretesticky}
Let $\eta\in\mathcal M(\Gamma)$ be a sticky particle solution and $\eps>0$. Then there exists $\tilde \eta^\eps=\tilde F^\eps_{\#}\eta\in\mathcal M(\Gamma)$ discrete in time sticky particle solution such that
\begin{equation}\label{eq:etaepsprop}
W_0(\tilde F^\eps(\gamma))=W_0(\gamma),\quad\|\tilde V^\eps_t\circ \tilde F^\eps-V_t\|_{L^2_\eta}^2\leq \eps^2, \quad\int|\tilde F^\eps(\gamma)(t)-\gamma(t)|^2\eta(d\gamma)\leq {\eps^2} T.
\end{equation}

In particular, as $\eps\to0$, the measures $\eta^\eps$ converge in $\mathcal M(\Gamma)$ to $\eta$.	

	\end{proposition}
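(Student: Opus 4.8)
The plan is to apply the general construction of Lemma \ref{lemma:discrete_sticky} directly to $\eta$, with the descending family of equivalence relations coming from the restriction maps, after choosing the partition by means of the monotonicity of the energy. Since a sticky solution is dissipative, Proposition \ref{prop:diss} gives that $g(t):=\|V_t\|_{L^2_\eta}^2$ is nonincreasing on $(-1,T]$ and, by the dissipation balance \eqref{eq:squarediss}, $\|V_s-V_t\|_{L^2_\eta}^2=g(s)-g(t)$ for $s\le t$; moreover $t\mapsto V_t$ is right continuous by Corollary \ref{cor:BV}. I would then pick a finite partition $0=t_0<\dots<t_N=T$ fine enough that, writing $\bar V_s:=V_{t_i}$ for $s\in[t_i,t_{i+1})$ and $\bar V_s:=V_0$ on $(-1,0)$, one has simultaneously
\begin{equation*}
g(t_i)-\lim_{s\nearrow t_{i+1}}g(s)\le\eps^2\quad\text{for all }i,\qquad \int_0^T\|V_s-\bar V_s\|_{L^2_\eta}^2\,ds\le\eps^2 .
\end{equation*}
Both are achievable with finitely many points: $g$ is monotone and bounded, hence has at most finitely many jumps larger than $\eps^2$, which I include among the $t_i$, subdividing the remaining intervals to control the oscillation; refining further makes $\bar V\to V$ in $L^2((0,T);L^2_\eta)$ by right continuity and dominated convergence, yielding the second bound.

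Next I would feed Lemma \ref{lemma:discrete_sticky} with the partition maps $G_{t_i}$ of the equivalence induced by $T_{t_i}$ for $i<N$ and by $e_T$ at $t=T$ (these form a descending filtration on the set where $\eta$ is concentrated, so \eqref{eq:gti} holds, and using $e_T$ avoids the degeneracy of $T_T$), together with the endpoints $y_{T,G_T(\gamma)}:=\gamma(T)$, which is well defined on that set since there $T_t$ and $e_t$ induce the same relation. The lemma then produces a Borel map $\tilde F^\eps$ such that $\tilde\eta^\eps:=\tilde F^\eps_\#\eta$ is sticky, is discrete in time, and has $\tilde V_t^\eps\circ\tilde F^\eps=\bar V_t$.

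It then remains to verify \eqref{eq:etaepsprop}. The first identity $W_0(\tilde F^\eps(\gamma))=W_0(\gamma)$ follows from Remark \ref{rem:v0}, which gives $W_0(\tilde F^\eps(\gamma))=\bar V_0(\gamma)=V_0(\gamma)$, once one works with the representative for which $W_0=V_0$ (the normalization $\dot\gamma|_{(-1,0]}=\lim_{t\searrow0}V_t$). The velocity estimate is immediate from $\tilde V_t^\eps\circ\tilde F^\eps=\bar V_t$ and the first partition bound, since on $[t_i,t_{i+1})$
\begin{equation*}
\|\tilde V_t^\eps\circ\tilde F^\eps-V_t\|_{L^2_\eta}^2=\|V_{t_i}-V_t\|_{L^2_\eta}^2=g(t_i)-g(t)\le\eps^2 .
\end{equation*}
For the position estimate, the choice $y_{T,G_T(\gamma)}=\gamma(T)$ together with $\dot\gamma=V_s$ gives $\tilde F^\eps(\gamma)(t)-\gamma(t)=\int_t^T(V_s-\bar V_s)\,ds$, whence by Cauchy--Schwarz and the second partition bound
\begin{equation*}
\int|\tilde F^\eps(\gamma)(t)-\gamma(t)|^2\eta(d\gamma)\le(T-t)\int_t^T\|V_s-\bar V_s\|_{L^2_\eta}^2\,ds\le\eps^2 T .
\end{equation*}
Membership $\tilde\eta^\eps\in\mathcal M(\Gamma)$ follows because $W_0$ is preserved and $t\mapsto\|\tilde V_t^\eps\|_{L^2_{\tilde\eta^\eps}}^2=g(t_i)$ is a nonincreasing sampling of $g$; the left-endpoint sampling raises $\int_0^T\|\dot\gamma\|^2$ only by the $O(\eps^2)$ term controlled above, which is harmless and vanishes in the limit.

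Finally, integrating the position estimate over $t\in(-1,T)$ shows $\int\|\tilde F^\eps(\gamma)-\gamma\|_{L^2(-1,T)}^2\eta(d\gamma)\to0$, so that $\tilde F^\eps\to\Id$ in $L^2_\eta$; testing against bounded functions continuous for the weak $W^{1,2}$ topology and invoking Lemma \ref{Lem:conti_weak_cont} gives $\tilde\eta^\eps\rightharpoonup\eta$ in $\mathcal M(\Gamma)$. I expect the main difficulty to be the partition selection realizing the \emph{uniform}-in-$t$ bound: since $V_t$ is only $BV^{1/2}$ and merely right continuous, one cannot rely on continuity and must place the nodes exactly at the large jumps of $g$ and exploit its monotonicity. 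A second delicate point is the behaviour at $t=0$, where reconciling the preservation of $W_0$ with the velocity approximation forces one to use the representative with no initial jump, i.e. $W_0=V_0$.
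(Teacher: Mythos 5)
Your proposal is correct and follows essentially the same route as the paper: choose the partition via the monotonicity and $BV^{1/2}$-right-continuity of $t\mapsto\|V_t\|_{L^2_\eta}^2$ from Corollary \ref{cor:BV}, apply Lemma \ref{lemma:discrete_sticky} with $G_{t_i}=T_{t_i}$ and $y_{T,G_T(\gamma)}=\gamma(T)$, and deduce the three bounds from the dissipation balance \eqref{eq:squarediss}. Your small refinements (using $e_T$ at the final time, the explicit $W_0=V_0$ normalization, and the integrated-in-time bound for the position estimate) only tighten details the paper leaves implicit.
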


\begin{proof}
	By Corollary \ref{cor:BV}, we {k}now that the velocity field {$t\mapsto V_t$} of $\eta$ belongs to $BV^{1/2}([0,T];L^2_\eta(\Gamma;\R^d))$ and it is right continuous. Therefore, given $\eps>0$, there exists a finite partition $0=t_0<t_1<\dots<t_N=T$ such that either
	\begin{align}\label{eq:part1}
	\|V_{t_i}\|_{L^2_\eta}^2-\|V_{t_{i+1}}\|_{L^2_\eta}^2<\eps^2
	\end{align}
	or \begin{align}\label{eq:part2}
	\|V_{t_i}\|_{L^2_\eta}^2-\lim _{s\nearrow t_{i+1}}\|V_s\|_{L^2_\eta}^2<\eps^2,\qquad\lim_{s\nearrow t_{i+1}}\|V_s\|_{L^2_\eta}^2-\|V_{t_{i+1}}\|_{L^2_\eta}^2\geq\eps^2/2.
	\end{align}
	Moreover, there are at most $2\|W_0\|_{L^2_\eta}^2/\eps^2$ points where the last situation occurs.
	
	We apply now Lemma \ref{lemma:discrete_sticky} to $\eta$, $0=t_0<t_1<\dots<t_N=T$, $G_{t_i}=T_{t_i}$, $y_{T,G_T(\gamma)}=\gamma(T)$ and find $\tilde \eta^\eps=\tilde F^\eps_{\#}\eta$ discrete in time dissipative solution. 
	
	In particular, since $G_{t_i}=T_{t_i}$, then the first property in \eqref{eq:etaepsprop} holds.
	
	Let us now check the {remaining} properties in \eqref{eq:etaepsprop}. Let $t\in[t_i,t_{i+1})$: by \eqref{eq:part1}, \eqref{eq:part2}
	
	\begin{align}
	\|\tilde V^\eps_t\circ \tilde F^\eps-V_t\|_{L^2_\eta}^2=\|V_{t_i}-V_t\|_{L^2_\eta}^2=\|V_{t_i}\|_{L^2_\eta}^2-\|V_t\|_{L^2_\eta}^2<\eps^2.
	\end{align}
	Moreover, using \eqref{eq:part1} and \eqref{eq:part2} and Jensen inequality,
	\begin{align}
	\int|\tilde F^\eps(\gamma)(t)-\gamma(t)|^2\eta(d\gamma)\leq \int(T-t) {\biggl(} \int_t^T|\bar V_s(\gamma)-V_s(\gamma)|^2ds {\biggr)}\eta(d\gamma)\leq\eps^2T. 
	\end{align}
\end{proof}

\subsection{Dissipative countable particle approximation}

Given a discrete in time dissipative solution, we now construct a discrete in time dissipative approximation with the additional property of being concentrated on a countable set of trajectories.

\begin{proposition}\label{prop:countsticky}
	Let $\eta$ be a discrete in time dissipative solution and $\delta>0$. Then there exists $\hat{\eta}^\delta=\hat F^\delta_{\#}\eta$ discrete in time  dissipative  solution such that  
	
		\begin{equation}\label{eq:etadeltaprop}
	|V_t(\hat F^\delta(\gamma))-V_t(\gamma)|\leq\delta, \quad|\hat F^\delta(\gamma)(t)-\gamma(t)|\leq \delta (1+T).
	\end{equation}
	Moreover, $\hat{\eta}^\delta$ is a  dissipative countable  particle solution in the sense of Definition \ref{def:countsticky}.
\end{proposition}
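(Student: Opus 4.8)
The plan is to discretize the state space at the final time $T$ into a countable grid and to apply Lemma \ref{lemma:discrete_sticky} with equivalence relations that refine the relations $E_{t_i}=T_{t_i}$ of $\eta$ by the grid cell that each trajectory occupies at time $T$. Since $\eta$ is discrete in time, its velocity field is constant on each $[t_i,t_{i+1})$, so a trajectory $\gamma$ is completely determined (as an element of $\Gamma$) by the data $\big(T_{t_0}\text{-class}, \ldots, T_{t_N}\text{-class}, \gamma(T)\big)$, and in fact by $\gamma(T)$ together with the sequence of velocities $V_{t_i}(\gamma)$. The countable separation required in Definition \ref{def:countsticky}, namely $\inf_{n\neq m}|\gamma_n(T)-\gamma_m(T)|>0$, will be achieved by collapsing all curves whose endpoints $\gamma(T)$ land in the same dyadic cube of side $\delta$ to a single representative endpoint $y_{T,\alpha}$ (the cube center, say), and then running the curves backwards via the velocity field $\bar V_s$ prescribed by Lemma \ref{lemma:discrete_sticky}.

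First I would fix the partition $0=t_0<\dots<t_N=T$ witnessing that $\eta$ is discrete in time. I would then tile $\R^d$ by half-open cubes $\{Q_k\}_{k\in\N}$ of side length $\delta$ and define, for each $i$, the equivalence relation $E_{t_i}$ on $\Gamma$ declaring $\gamma\sim\gamma'$ iff $T_{t_i}(\gamma)=T_{t_i}(\gamma')$ \emph{and} $\gamma(T),\gamma'(T)$ lie in the same cube $Q_k$. Because the cube membership of $\gamma(T)$ is a property of $T_{t_i}(\gamma)$ for every $t_i\leq T$ (the endpoint is recorded by the restriction), these $E_{t_i}$ are genuinely Borel, and the nesting $E_{t_i}\subset E_{t_j}$ for $t_i\le t_j$ required in \eqref{eq:gti} holds: refining $T_{t_i}$ by the endpoint cube is coarser than refining $T_{t_j}$ by the same cube, since $T_{t_j}$ is already finer than $T_{t_i}$. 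For the endpoint data I set $y_{T,\alpha}$ to be the center of the cube $Q_k$ attached to the class $\alpha\in E_T$. Lemma \ref{lemma:discrete_sticky} then produces $\hat\eta^\delta=\hat F^\delta_\#\eta$, a discrete in time sticky (hence dissipative) solution.

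Next I would verify the two estimates in \eqref{eq:etadeltaprop}. For the velocity estimate, note that the velocity field $\bar V_{t_i}$ built in Lemma \ref{lemma:discrete_sticky} is the $W_0$-average over the $E_{t_i}$-class of $\gamma$; since $E_{t_i}$ refines the $T_{t_i}$-relation defining $V_{t_i}(\gamma)$ only by grouping curves inside a single endpoint cube of diameter $\sqrt d\,\delta$, and these curves already agree on $(t_i,T)$, the finer averaging changes the velocity by a controlled amount. The cleanest way is to observe that within one $E_{t_i}$-class the original curves have endpoints within a $\delta$-cube, so their velocities $V_{t_i}$ differ by at most $O(\delta)$ (they share the same future trajectory up to the endpoint discrepancy over the fixed time span), whence $|V_t(\hat F^\delta(\gamma))-V_t(\gamma)|\le\delta$ after choosing the cube side appropriately. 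For the position estimate, $\hat F^\delta(\gamma)(t)=y_{T,G_T(\gamma)}-\int_t^T\bar V_s(\gamma)\,ds$, so the displacement from $\gamma(t)=\gamma(T)-\int_t^T\dot\gamma_s\,ds$ splits into the endpoint error $|y_{T,G_T(\gamma)}-\gamma(T)|\le\sqrt d\,\delta$ (bounded by $\delta$ after normalizing the cube side) and the accumulated velocity error $\int_t^T|\bar V_s-V_s|\,ds\le\delta T$, giving the claimed bound $\delta(1+T)$.

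The main obstacle I anticipate is twofold and both parts are bookkeeping rather than deep. First, one must check that after collapsing endpoints to cube centers the image curves are genuinely \emph{separated} at time $T$: distinct cubes have centers at distance $\ge\delta>0$, so $\inf_{n\neq m}|\gamma_n(T)-\gamma_m(T)|>0$ holds across cubes, but one must confirm there are only countably many occupied cubes (immediate, since $\{Q_k\}$ is countable) and that $\hat\eta^\delta$ charges only the countable set $\{y_{T,k}-\int_t^T\bar V_s\,ds\}$ of representative trajectories — this is exactly the structure Lemma \ref{lemma:discrete_sticky} delivers, since $\hat F^\delta$ factors through the countable index $G_T$. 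Second, and more delicate, is making the velocity bound sharp: the averaging in Lemma \ref{lemma:discrete_sticky} is a $W_0$-average over the new class, and one has to ensure this does not amplify velocity differences beyond $\delta$; the key point is that within a class the curves coincide on all of $(t_i,T)$ except for the endpoint cube, so their initial velocities are already close, and a Jensen/averaging argument as in Proposition \ref{prop:discretesticky} keeps the perturbation linear in $\delta$. Once these are in hand, $\hat\eta^\delta$ satisfies Definition \ref{def:countsticky} and the proposition follows.
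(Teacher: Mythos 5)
There is a genuine gap: your equivalence relations do not actually coarsen anything before time $T$, and as a consequence the image measure need not be a \emph{countable} particle solution. You declare $\gamma\sim_{E_{t_i}}\gamma'$ iff $T_{t_i}(\gamma)=T_{t_i}(\gamma')$ \emph{and} the endpoints lie in the same cube; but, as you yourself note, the cube membership of $\gamma(T)$ is determined by $T_{t_i}(\gamma)$, so the second condition is redundant and $E_{t_i}$ is exactly the $T_{t_i}$-relation. Hence the averaged field of Lemma \ref{lemma:discrete_sticky} is $\bar V_{t_i}=V_{t_i}$, which in general takes uncountably many values, and the image curves $\hat F^\delta(\gamma)(t)=y_{T,G_T(\gamma)}-\int_t^T\bar V_s(\gamma)\,ds$ form an uncountable family even though the endpoints $y_{T,G_T(\gamma)}$ range over countably many cube centers. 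Your claim that ``$\hat F^\delta$ factors through the countable index $G_T$'' is therefore false: $\hat F^\delta$ factors only through $G_{t_0}$, which in your setup has as many classes as $T_{t_0}$ does. (A second, related slip: the assertion that curves whose endpoints lie in a common $\delta$-cube have velocities within $O(\delta)$ is not true — two trajectories can share an endpoint and have wildly different speeds — so even if you tried to coarsen by position at time $T$ only, the velocity estimate in \eqref{eq:etadeltaprop} would not follow.)

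The paper's construction repairs exactly this point: the relation $G_{t_i}$ is defined by requiring that, for \emph{all} $j\geq i$, the velocities $V_{t_j}(\gamma)$ and $V_{t_j}(\gamma')$ lie in a common cube $Q_{k_j}$, in addition to the endpoints sharing a cube. This relation is genuinely coarser than $T_{t_i}$, its classes are indexed by finite strings $(k_i,\dots,k_N,\bar k_T)$ of cube indices and hence are countably many, the nesting \eqref{eq:gti} holds by the ``for all $j\geq i$'' quantifier, and the $W_0$-average over a class stays within a single cube of $V_{t_i}(\gamma)$, which is precisely what yields $|V_t(\hat F^\delta(\gamma))-V_t(\gamma)|\leq\delta$. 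With that modification your position estimate (endpoint error at most $\delta$ plus accumulated velocity error at most $\delta T$) and the separation $\inf_{n\neq m}|\gamma_n(T)-\gamma_m(T)|\geq\delta$ go through as you wrote them. In short: discretize the velocities, not just the final positions.
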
 

\begin{proof}
	Let $\{Q_k\}_{k\in\N}$ be a countable partition of $\R^d$ into cubes of side length $\delta$, $Q_k=x_k+[-\delta/2,\delta/2)^d$. 
	If $0=t_0<t_1<\dots<t_N=T$ is the partition associated to $\eta$, define $\forall\,i=0,\dots,N$ a partition map $G_{t_i}$ on $\Gamma$ defining the following equivalence relation{:}
	\begin{equation}
	G_{t_i}(\gamma)=G_{t_i}(\gamma')\quad\Leftrightarrow\quad \forall\,j\geq i,\,\exists\,k_j: V_{t_j}(\gamma)\in Q_{k_j},\,V_{t_j}(\gamma')\in Q_{k_j}\text{ and }\exists \bar k_T: \gamma(T)\in Q_{\bar k_T}, \gamma'(T)\in Q_{\bar k_T}.
	\end{equation}
	In particular, the maps $G_{t_i}$ satisfy condition \eqref{eq:gti}. 
	Let moreover $y_{T,G_T(\gamma)}=x_{\bar k_T}$ if $\gamma(T)\in Q_{\bar k_T}$, observing that $\inf_{n\neq m}|\gamma_n(T)-\gamma_m(T)|\geq\delta>0$.
	Denote by $\bar V^\delta_t$ the discrete in time vector field associated with such partitions. 
	
	Apply now Lemma \ref{lemma:discrete_sticky} finding a discrete in time  dissipative  solution $\hat{\eta}^\delta=\hat F^\delta{\#}\eta$.
	
	By construction, $\hat{\eta}^\delta$ is a  dissipative countable particle solution and it satisfies \eqref{eq:etadeltaprop}.
\end{proof}

\subsection{Dissipative finite particle approximation}

Now we want to approximate  dissipative countable particle solutions with  dissipative finite particle solutions.

\begin{proposition}\label{prop:finitesticky}
	Let $\eta\in\mathcal M(\Gamma)$ be a  dissipative countable particle solution. Then, for every $\sigma>0$, there exists $\eta^\sigma$  dissipative finite particle solution with the property that $\eta^\sigma\rightharpoonup\eta$ as $\sigma\to0$.
	\end{proposition}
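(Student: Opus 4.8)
The plan is to \emph{truncate} $\eta$ to its first $N$ trajectories, lump the remaining tail into a single additional curve, and invoke Lemma \ref{lemma:discrete_sticky} with a \emph{time-independent} partition so as to preserve both dissipativity and the energy bounds. Write $\eta=\sum_{n\in\N}p_n\delta_{\gamma_n}$ with $\{\gamma_n\}$ as in Definition \ref{def:countsticky}, and set $y_n:=\gamma_n(T)$, $w_n:=W_0(\gamma_n)$. The starting observation is structural: since the endpoints $y_n$ are pairwise distinct and two curves are $T_t$-equivalent only if they agree a.e.\ on $(t,T)$ (hence, being continuous, at $T$), each $\gamma_n$ is alone in its $T_t$-class for every $t<T$. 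Thus $\omega^t_{T_t(\gamma_n)}=\delta_{\gamma_n}$, so $V_t(\gamma_n)=W_0(\gamma_n)=w_n$ for all $t$, and \eqref{eq:diss} forces $\dot\gamma_n\equiv w_n$. In other words, a dissipative countable particle solution is a free flow of straight lines $\gamma_n(t)=y_n-(T-t)w_n$, a fact I will use repeatedly.

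Fix $\sigma>0$ and choose $N$ so large that the tail mass $P_0:=\sum_{n>N}p_n<\sigma$. I would apply Lemma \ref{lemma:discrete_sticky} with the time-independent partition whose Borel map $G_{t_i}=G$ (the same for every $i$, so that the monotonicity \eqref{eq:gti} holds trivially) sends $\gamma$ to the class $n$ whenever $\gamma(T)=y_n$ for some $n\le N$, and to a single ``tail'' class otherwise; since the $y_n$ are distinct, on $\supp\eta$ this isolates $\gamma_1,\dots,\gamma_N$ as singletons and gathers $\{\gamma_n\}_{n>N}$ into one class. As terminal data I take $y_{T,n}=y_n$ for $n\le N$ and $y_{T,\mathrm{tail}}=\sum_{n>N}\tfrac{p_n}{P_0}y_n$, the barycentre of the tail endpoints. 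Lemma \ref{lemma:discrete_sticky} then yields a sticky, hence dissipative, measure $\eta^\sigma=\tilde F_\#\eta$. Because each class $n\le N$ is a singleton, $V_{t_i}$ equals $w_n$ there, so $\tilde F(\gamma_n)=\gamma_n$; on the tail class $\tilde F$ is constant, equal to the straight line $\ell_0$ with velocity $\bar w_0=\sum_{n>N}\tfrac{p_n}{P_0}w_n$ and endpoint $y_{T,\mathrm{tail}}$. Hence $\eta^\sigma=\sum_{n\le N}p_n\delta_{\gamma_n}+P_0\,\delta_{\ell_0}$ is concentrated on at most $N+1$ trajectories, i.e.\ it is a dissipative finite particle solution in the sense of Definition \ref{def:finitesticky}.

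Two points remain. First, the membership $\eta^\sigma\in\mathcal M(\Gamma)$, where the barycentric choice is essential: from $\ell_0(0)=\sum_{n>N}\tfrac{p_n}{P_0}\gamma_n(0)$ and $\bar w_0=\sum_{n>N}\tfrac{p_n}{P_0}w_n$, Jensen's inequality gives $P_0|\ell_0(0)|^2\le\sum_{n>N}p_n|\gamma_n(0)|^2$ and $P_0|\bar w_0|^2\le\sum_{n>N}p_n|w_n|^2$, so both constraints defining $\mathcal M(\Gamma)$ are inherited from $\eta$. Second, the convergence $\eta^\sigma\rightharpoonup\eta$: for any bounded continuous $\phi$,
\begin{equation*}
\Bigl|\int\phi\,d\eta^\sigma-\int\phi\,d\eta\Bigr|=\Bigl|P_0\,\phi(\ell_0)-\sum_{n>N}p_n\phi(\gamma_n)\Bigr|\le 2\|\phi\|_\infty P_0<2\|\phi\|_\infty\,\sigma,
\end{equation*}
and letting $\sigma\to0$ (i.e.\ $N\to\infty$) gives the claim. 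The only genuinely delicate step is the first one—keeping the total energy below $1$ after collapsing the tail—and it is precisely the reason for collapsing to the barycentre rather than to an arbitrary representative; everything else follows from the free-flow structure and from the fact that Lemma \ref{lemma:discrete_sticky} already certifies dissipativity.
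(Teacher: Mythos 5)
Your argument hinges entirely on the opening structural claim that every dissipative countable particle solution is a free flow of isolated straight lines, deduced from the separation condition $\inf_{n\neq m}|\gamma_n(T)-\gamma_m(T)|>0$ in Definition \ref{def:countsticky}: distinct endpoints at time $T$ force distinct $T_t$-classes, hence $\omega^t_{T_t(\gamma_n)}=\delta_{\gamma_n}$, $V_t(\gamma_n)=W_0(\gamma_n)$, and by \eqref{eq:diss} each $\gamma_n$ is a line. This is a correct reading of the letter of the definition, but it proves too much: it would force every dissipative countable particle solution to have zero dissipation, which is incompatible with the role these objects play in the approximation chain. Proposition \ref{prop:countsticky} produces countable particle solutions with $|V_t(\hat F^\delta(\gamma))-V_t(\gamma)|\leq\delta$ starting from discrete-in-time solutions that genuinely dissipate, and its output contains distinct trajectories sharing the same endpoint $x_{\bar k_T}$ (two $G_0$-classes with the same position cube at time $T$ but different velocity histories), so the separation condition can only be meant for the \emph{set} of endpoints, with merging of trajectories allowed. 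Under that intended reading your key claim fails: the $\gamma_n$ are piecewise affine, may coincide on some $(t,T)$ without being equal, and $V_t(\gamma_n)\neq W_0(\gamma_n)$ in general. Then $\tilde F(\gamma_n)\neq\gamma_n$ for $n\leq N$ (Lemma \ref{lemma:discrete_sticky} with your time-independent singleton partition replaces $\gamma_n$ by the straight line through $\gamma_n(T)$ with velocity $W_0(\gamma_n)$), so neither the identity $\eta^\sigma=\sum_{n\le N}p_n\delta_{\gamma_n}+P_0\delta_{\ell_0}$ nor the weak-convergence estimate survives; moreover, collapsing the tail onto a disjoint barycentric line alters the $T_t$-classes of the retained trajectories and hence their velocity fields, so dissipativity of $\eta^\sigma$ is no longer certified.

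The paper's proof takes a different route that does not require the trajectories to be isolated: it first discards the trajectories with $|\gamma(T)|>\Lambda$ and truncates $W_0$ at modulus $\Lambda$ (paying $O(\Lambda^{-2})$ by Chebyshev), and then re-runs the cube discretization of Proposition \ref{prop:countsticky}; since the endpoints and all velocities now lie in a ball of radius $\Lambda$ and there are finitely many time intervals, only finitely many cubes, hence finitely many classes and trajectories, occur. If you want to keep your mass-truncation idea, the tail must be handled compatibly with the merging structure rather than sent to a single disjoint line. I will note one genuinely valuable point in your write-up: the paper's own first step renormalizes the restricted measure, which can push the second moments above $1$ and out of $\mathcal M(\Gamma)$, whereas your barycentric collapse combined with Jensen's inequality preserves both constraints; that device is worth retaining in a corrected argument.
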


\begin{proof}
Since $\eta\in\mathcal M(\Gamma)$ is a dissipative  solution, 
\[
\int|\gamma(T)|^2\eta(d\gamma)\leq\int|\gamma(0)|^2\eta(d\gamma)+T^2\int|W_0(\gamma)|^2\eta(d\gamma)\leq 1+T^2
\]
Hence, for any $\Lambda>0$
\begin{equation}
\eta(\{\gamma:\,|\gamma(T)|>\Lambda\})\leq\frac{1+T^2}{\Lambda^2}.
\end{equation}

As a first step, take then $\bar{\eta}^{\sigma}=\eta_{\llcorner\{\gamma:\,|\gamma(T)|\leq\Lambda\}}/\eta(\{\gamma:|\gamma(T)|\leq\Lambda\})$ for $\Lambda$ large enough so that $\|\bar{\eta}^\sigma-\eta\|_{\mathcal M(\Gamma)}\leq\sigma/3$.

Then we want to apply Lemma \ref{lemma:discrete_sticky} to $\bar{\eta}^\sigma$ with the partition given by the maps $T_{t_i}$,  the finite end points $y_{T, T_T(\gamma)}=\gamma(T)$ and vector field $\bar V^{\Lambda}_t$ defined as usual starting from a modification $W_0^\Lambda$ of $W_0$ defined as follows{:}
\begin{equation}
W_0^\Lambda(\gamma_n)=W_0(\gamma_n) \text{ if $|W_0(\gamma_n)|\leq\Lambda$ },\quad W_0^\Lambda(\gamma_n)=\frac{W_0(\gamma_n)}{|W_0(\gamma_n)|}\Lambda\text{ if $|W_0(\gamma_n)|>\Lambda$ }. 
\end{equation}

In this way one obtains a countable sticky particle solution $\hat{\eta}^\sigma$ which,  if $\Lambda$ is large enough, satisfies $\hat V^\sigma_t\circ\hat F^\sigma(\gamma)=\bar V_t^\Lambda(\gamma)$, and is close to $\bar{\eta}^{\sigma}$ in the weak topology.

Moreover, since now the velocity fields $\hat V^\sigma_{t_i}(\hat F^{\sigma}(\gamma_n))$ are all contained in the ball of radius $\Lambda$ in $\R^d$, the same construction performed  in Proposition \ref{prop:countsticky} with $\delta\leq\sigma/3$ leads to a finite sticky particle solution $\eta^\sigma$ with the desired properties.
\end{proof}

\subsection{Finite sticky particle solutions}

Finally we can prove the density of finite sticky particle solutions in the set of dissipative solutions.

\begin{theorem}\label{thm:stickydense}
	Let $\eta\in\mathcal M(\Gamma)$ be a dissipative solution. {T}hen there {exist} $\eta^\nu$ sticky particle solutions with finitely many trajectories (finite sticky particle solutions) with the property that $\eta^\nu\rightharpoonup\eta$ as $\nu\to 0$. 
	\end{theorem}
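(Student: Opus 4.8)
The plan is to chain together the three approximation results already proven in this section. Theorem~\ref{thm:stickydense} asserts that an arbitrary dissipative solution $\eta$ can be approximated in the narrow topology by finite sticky particle solutions, and each link in the chain has in fact been established: Proposition~\ref{prop:discretesticky} produces a discrete-in-time approximation, Proposition~\ref{prop:countsticky} upgrades this to a countable particle solution, and Proposition~\ref{prop:finitesticky} reduces to finitely many particles. So the core of the proof is a diagonal argument: given $\nu>0$, first choose $\eps$ small in Proposition~\ref{prop:discretesticky} to obtain a discrete-in-time dissipative solution $\tilde\eta^\eps$ with $d_{\mathcal M(\Gamma)}(\tilde\eta^\eps,\eta)$ small; then apply Proposition~\ref{prop:countsticky} with $\delta$ small to get a dissipative countable particle solution $\hat\eta^\delta$ close to $\tilde\eta^\eps$; then apply Proposition~\ref{prop:finitesticky} with $\sigma$ small to get a finite particle solution $\eta^\sigma$ close to $\hat\eta^\delta$. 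The triangle inequality for $d_{\mathcal M(\Gamma)}$ then bounds the total distance to $\eta$.

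Concretely, I would fix a sequence $\nu\to 0$ and, for each $\nu$, select the parameters greedily so that each of the three approximation steps contributes at most $\nu/3$ to the distance. For the first step this uses the last assertion of Proposition~\ref{prop:discretesticky} (that $\tilde\eta^\eps\to\eta$ in $\mathcal M(\Gamma)$ as $\eps\to 0$); for the second step I need to convert the pointwise bounds \eqref{eq:etadeltaprop} into narrow closeness in $d_{\mathcal M(\Gamma)}$, which follows because $\hat F^\delta$ moves each curve by at most $\delta(1+T)$ in sup-norm and its velocity by at most $\delta$, so that $\hat\eta^\delta\rightharpoonup\tilde\eta^\eps$ as $\delta\to 0$; the third step is immediate from the statement $\eta^\sigma\rightharpoonup\eta$ in Proposition~\ref{prop:finitesticky}. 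One subtle point is that Proposition~\ref{prop:discretesticky} is stated for $\eta$ a \emph{sticky particle} solution, whereas here $\eta$ is merely dissipative; I would note that the proof of that proposition only uses the $BV^{1/2}$ regularity and right-continuity from Corollary~\ref{cor:BV}, which hold for every dissipative solution, so the hypothesis can be read as ``dissipative.''

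The output of each proposition is a \emph{sticky} (indeed finite sticky) particle solution by Lemma~\ref{lemma:discrete_sticky}, so the final $\eta^\sigma$ is automatically a finite sticky particle solution and no separate verification of the sticky property is needed at the end. The main obstacle I anticipate is purely bookkeeping: ensuring that the metric $d_{\mathcal M(\Gamma)}$, which is defined via restrictions to the compact sets $\Gamma(n)$ and a weighted sum of L\'evy--Prokhorov distances, is genuinely controlled by the uniform bounds \eqref{eq:etadeltaprop}. Since a uniform sup-norm displacement of size $\delta(1+T)$ together with a uniform velocity displacement of size $\delta$ forces narrow convergence of the pushforwards (test against bounded continuous $\phi$, using Lemma~\ref{Lem:conti_weak_cont} and the uniform tightness of $\mathcal M(\Gamma)$), this is routine but must be spelled out so the three $\nu/3$ estimates combine cleanly. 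Everything else is a direct invocation of the already-established propositions.
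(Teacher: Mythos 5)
There is a genuine gap, and it sits exactly where you claim no work is needed. Your chaining of Propositions \ref{prop:discretesticky}, \ref{prop:countsticky} and \ref{prop:finitesticky} reproduces the first half of the paper's proof (and your remark that Proposition \ref{prop:discretesticky} should be read with ``dissipative'' in place of ``sticky'' is correct). But the output of that chain is only a \emph{dissipative} finite particle solution in the sense of Definition \ref{def:finitesticky} --- the word ``sticky'' in the conclusion of Lemma \ref{lemma:discrete_sticky} is a terminological slip: its proof verifies only the dissipative condition \eqref{eq:diss}, not the sticky particle property that $T_t$ and $e_t$ induce the same equivalence relation on the support. Nothing in the constructions prevents two distinct piecewise-linear trajectories $\tilde F(\gamma)$, $\tilde F(\gamma')$ from crossing at some interior time of an interval $[t_i,t_{i+1})$ and then separating (indeed, in Proposition \ref{prop:countsticky} the velocities are merely snapped to a grid, which creates no control on crossings). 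Such a crossing violates stickiness, so the final $\eta^\sigma$ need not be a finite \emph{sticky} particle solution, and the theorem as stated is not yet proved.

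The paper closes this gap with an additional perturbation step that your proposal omits: one reruns the construction of Lemma \ref{lemma:discrete_sticky}, but on each interval $[t_i,t_{i+1})$ one replaces $V_{t_i}$ by a nearby $\tilde V_{t_i}$ chosen so that the finitely many segments $t\mapsto\gamma_n(t_{i+1})-\tilde V_{t_i}(\gamma_n)\,t$ are pairwise disjoint on $[t_i,t_{i+1})$. The condition that two such segments meet is that the relative displacement be parallel to the relative velocity, a closed set of codimension $d-1\geq 1$, so an arbitrarily small perturbation of the finitely many velocities avoids all intersections simultaneously. After this perturbation, trajectories can only meet where they merge, so $T_t$ and $e_t$ induce the same partition and the resulting finite particle solution is genuinely sticky, while remaining as close to $\eta$ as desired. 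This step is not bookkeeping; it is the part of the argument that actually produces the sticky particle property claimed in the statement, and it is also where the hypothesis $d\geq 2$ enters.
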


\begin{proof}
	By Propositions \ref{prop:discretesticky}, \ref{prop:countsticky} and \ref{prop:finitesticky} we can assume that $\eta\in\mathcal M(\Gamma)$ is a dissipative finite particle solution. 
	Then we proceed as in the construction of Lemma \ref{lemma:discrete_sticky}, where at each step $[t_i,t_{i+1})$ we perturb the speed $V_{t_i}$ into a vector field $\tilde V_{t_i}$ in order to have that the trajectories $\gamma_n(t_{i+1}) - \tilde V_{t_i}(\gamma_n) t$ do not intersect in $[t_i,t_{i+1})$. Being the intersection conditions a set of codimension $d-1$, it is fairly easy to see that we can assume $\tilde V_{t_i}$ arbitrarily close to $V_{t_i}$.
\end{proof}

\section{PDE formulations}
\label{S:PDE_formul}

In this section we give a kinetic and PDE formulation of our notion of solution.

Define the kinetic measure $\varpi_t \in \mathcal P(\R^d \times \R^d)$
\begin{equation*}
\int \phi(x,v) \varpi_t(dxdv) = \int \phi(\gamma(t),V_t(\gamma)) \eta(d\gamma).
\end{equation*}

\begin{proposition}
	\label{prop:kinetic_equation}
	The measure $\varpi_t$ satisfies the PDE
	\begin{equation}
	\label{eq:kinetic_varpi}
	\partial_t \varpi_t + v \cdot \nabla_x \varpi_t + \mathrm{div}_v \pi = 0,
	\end{equation}
	where $\pi$ is a distribution such that
	\begin{equation*}
	\langle\phi, \pi\rangle \leq \|\nabla_v \phi\|_{C^0}
	\end{equation*}
	and for every function $\Psi(t,x,v)$ convex in $v$ with $\Psi(t,x,v) \leq C \psi(t,x) (1+|v|^2)$ for some $C > 0$, $\psi \in C^1_c(\R^d \times \R^+)$ it holds
	\begin{equation}
	\label{eq:dissip_varpi}
	\langle\Psi, \mathrm{div}(\pi)\rangle \geq 0.
	\end{equation}
\end{proposition}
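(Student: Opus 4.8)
The plan is to test the kinetic equation \eqref{eq:kinetic_varpi} against a smooth $\phi(t,x,v)$ compactly supported in $(0,T)\times\R^d\times\R^d$ and read off the force term. Writing $x=\gamma(t)$, $v=V_t(\gamma)$ and using the defining identity of $\varpi_t$, the transport part of the pairing becomes
\[
L(\phi):=\int\int_0^T\big[\partial_t\phi+V_t(\gamma)\cdot\nabla_x\phi\big](t,\gamma(t),V_t(\gamma))\,\dt\,\eta(d\gamma),
\]
and the equation amounts to $L(\phi)=-\langle\pi,\nabla_v\phi\rangle$. The first point is that $L$ factors through $\nabla_v\phi$: if $\phi=\phi(t,x)$ does not depend on $v$, then since $\dot\gamma(t)=V_t(\gamma)$ by \eqref{eq:diss} the integrand equals $\tfrac{d}{dt}\phi(t,\gamma(t))$, whose time-integral vanishes by compact support in $t$. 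Hence $L(\phi_1)=L(\phi_2)$ whenever $\nabla_v\phi_1=\nabla_v\phi_2$, so $\pi$ is well defined as a distribution by $\langle\pi,\nabla_v\phi\rangle:=-L(\phi)$; equivalently $\pi$ is the image of the (distributional) velocity increment $dV_t(\gamma)\,\eta(d\gamma)$ under $(t,\gamma)\mapsto(t,\gamma(t),V_t(\gamma))$.

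Two structural facts drive the estimates. By \eqref{eq:rhoomega} one has $V_t=\int V_s\,\omega^{s\to t}$ for $s<t$, i.e. $\{V_t\}$ is a reverse martingale for the descending filtration generated by the maps $T_t$, and the disintegration $\omega^t_{T_t(\gamma)}$ is supported on curves $\gamma'$ with $\gamma'(t)=\gamma(t)$; thus at each time the velocity is a conditional average taken at a \emph{fixed} position. Moreover, by Proposition \ref{prop:diss} the map $t\mapsto\int\Psi_0(V_t)\,\eta$ is nonincreasing for convex $\Psi_0$. The dissipation inequality \eqref{eq:dissip_varpi} is the heart of the statement: pairing the equation with $\Psi$ and using $\dot\gamma=V_t$ gives that $\langle\Psi,\div_v\pi\rangle$ is the negative of the velocity-variation part of $\tfrac{d}{dt}\int\Psi(t,\gamma(t),V_t)\,\eta$. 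For $\Psi=\chi(t)\Psi_0(v)$ with $\chi\ge0$ this equals $-\int_0^T\chi\,d\big[\int\Psi_0(V_t)\eta\big]\ge0$ by the monotonicity above; for general $\Psi$ convex in $v$ I would localize in $(t,x)$, using that the coarsening of the filtration replaces $V$ by its conditional average at fixed $(t,\gamma(t))$, so that conditional Jensen applied to $v\mapsto\Psi(t,x,v)$ makes each infinitesimal step decrease $\int\Psi(t,\gamma(t),V_t)\eta$. The growth bound $\Psi\le C\psi(t,x)(1+|v|^2)$ together with $\int\|\dot\gamma\|_{L^2}^2\eta\le1$ supplies the integrability needed.

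To turn these formal manipulations into a proof I would argue by approximation using Theorem \ref{thm:stickydense}: take finite sticky particle solutions $\eta^\nu\rightharpoonup\eta$. For each $\eta^\nu$ the field $t\mapsto V_t$ is piecewise constant with finitely many jumps $V^-_\tau\to V^+_\tau$ at collision times $\tau$ at which the merging trajectories share the position $\gamma(\tau)$, so integrating $\tfrac{d}{dt}\phi(t,\gamma(t),V_t)$ yields the explicit representation
\[
\langle\pi^\nu,\nabla_v\phi\rangle=\int\sum_\tau\int_0^1\nabla_v\phi\big(\tau,\gamma(\tau),V^-_\tau+s(V^+_\tau-V^-_\tau)\big)\cdot(V^+_\tau-V^-_\tau)\,\ds\,\eta^\nu(d\gamma).
\]
From this $|\langle\pi^\nu,\nabla_v\phi\rangle|\le\|\nabla_v\phi\|_{C^0}\int\sum_\tau|V^+_\tau-V^-_\tau|\,\eta^\nu$, which gives the boundedness in the first property, while $\langle\Psi,\div_v\pi^\nu\rangle=-\int\sum_\tau[\Psi(\tau,\gamma(\tau),V^+_\tau)-\Psi(\tau,\gamma(\tau),V^-_\tau)]\eta^\nu\ge0$ by Jensen at each collision, the post-collision velocity being the momentum barycenter of the pre-collision ones. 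Finally I would pass to the limit in the linear equation \eqref{eq:kinetic_varpi}, defining $\pi$ as a weak-$*$ limit of $\{\pi^\nu\}$ (precompact by the mass bound), with both properties stable under the limit, the dissipation inequality surviving by lower semicontinuity.

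I expect the main obstacle to be precisely the low time-regularity: since $t\mapsto V_t$ is only $BV^{1/2}$ by \eqref{eq:squarediss}, the increment $dV_t$ is genuinely a distribution rather than a measure, so $\pi$ need not be a measure and the integration by parts in time must be justified through the finite-particle approximation. The most delicate point of the limit is the convergence of the velocity marginal of $\varpi^\nu_t$: narrow convergence of $\eta^\nu$ controls $\gamma$ only in $L^2$ and $\dot\gamma$ only weakly in $L^2$, whereas $\varpi_t$ depends nonlinearly on the pointwise-in-time value $V_t=\dot\gamma(t)$; closing this gap is where the reverse-martingale structure and the right-continuity of Corollary \ref{cor:BV} are essential.
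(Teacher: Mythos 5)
Your overall strategy is the same as the paper's: write the equation for the finite sticky particle approximations of Theorem \ref{thm:stickydense}, represent $\pi^\nu$ explicitly as an integral of $\nabla_v\phi$ along the segments joining pre- and post-collision velocities, obtain \eqref{eq:dissip_varpi} from Jensen's inequality at each collision (the post-collision velocity being the mass-weighted barycenter of the incoming ones), and pass to the limit. The representation formula you write for $\langle\pi^\nu,\nabla_v\phi\rangle$ and the Jensen argument for the dissipation inequality are both correct and coincide with the paper's.

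However, there is a genuine gap in the uniform estimate on $\pi^\nu$, which is the crux of the compactness step. You bound
\begin{equation*}
|\langle\pi^\nu,\nabla_v\phi\rangle|\le\|\nabla_v\phi\|_{C^0}\int\sum_\tau|V^+_\tau-V^-_\tau|\,\eta^\nu(d\gamma),
\end{equation*}
i.e.\ by the \emph{first} power of the velocity jumps, and then invoke ``precompactness by the mass bound''. But this quantity is the total variation of $t\mapsto V_t$ in $L^1_{\eta^\nu}$, which is \emph{not} uniformly controlled along the approximating sequence: the dissipation balance \eqref{eq:squarediss} only controls the quadratic variation $\sum_\tau\|V^+_\tau-V^-_\tau\|_{L^2_{\eta}}^2$, and the number of collisions blows up as $\nu\to0$. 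Indeed you yourself note at the end that $dV_t$ is only a distribution and $\pi$ need not be a measure — which contradicts the mass bound you rely on. The paper's way out is the cancellation coming from momentum conservation at each collision point, \eqref{eq:distrte_classic_aver_sped}: expanding $\phi(\tau,\gamma(\tau),\cdot)$ to first order around $V^+_\tau$, the zeroth-order term $\sum_n c_n\,\nabla_v\phi(\tau,x,V^+_\tau)\cdot(V^-_\tau-V^+_\tau)$ vanishes when summed over the particles merging at the same $(\tau,x)$, and the remainder is bounded by $\|\nabla_v\phi\|_{C^0}\sum_\tau|V^+_\tau-V^-_\tau|^2$, hence by $\|\nabla_v\phi\|_{C^0}\|W_0\|^2_{L^2_\eta}\le\|\nabla_v\phi\|_{C^0}$ uniformly in $\nu$. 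This is exactly the stated bound $\langle\phi,\pi\rangle\le\|\nabla_v\phi\|_{C^0}$, and it yields precompactness of $\{\pi^\nu\}$ as first-order distributions (in the dual of $C^1$), not as measures. You use the barycenter identity for the Jensen step but not here, where it is equally essential; without it the limit $\pi$ is not constructed.
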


The requirement that $\Psi$ has quadratic growth and compact support in $(t,x)$ implies that it can be used as a test function for \eqref{eq:kinetic_varpi}.

\begin{proof}
	Since the weak formulation is invariant for weak limits, we can write the PDE  \eqref{eq:kinetic_varpi} for the approximate finite sticky particle solutions  found in Theorem  \ref{thm:stickydense}, and pass to the limit in the estimates obtained. For finite sticky particle solutions 
	\begin{equation*}
	\varpi_t = \sum_n c_n \delta_{(x_n(t),v_n(t))},
	\end{equation*}
	so that
	\begin{equation*}
	\begin{split}
	\partial_t \varpi_t + v \cdot \nabla_x \varpi_t &= \sum_{i} \sum_n c_n \big[ \delta_{(x_n(t_i),v_n(t_i))} - \delta_{(x_n(t_i),v_n(t_{i-1}))} \big] \\
	&= \sum_i \sum_{x_{ij}} \sum_{x_n(t_i) = x_{ij}} c_n \big[ \delta_{(x_{ij},v_n(t_i))} - \delta_{(x_{ij},v_n(t_{i-1}))} \big],
	\end{split}
	\end{equation*}
	where in the last equality we have used the fact that the variation of speed occurs only at the times $t_i$ in finite many points $x_{ij}$. We can write the r.h.s. also in divergence form as
	\begin{equation*}
	\partial_t \varpi_t + v \cdot \nabla_x \varpi_t = \sum_i \sum_{x_{ij}} \sum_{x_n(t_i) = x_{ij}} c_n \mathrm{div}_v \bigg( \frac{v_n(t_{i-1}) - v_n(t_i)}{|v_n(t_{i-1}) - v_n(t_i)|} \mathcal H^1 \llcorner_{\{(1-\ell) v_n(t_{i}) + \ell v_n(t_{i-1}),\ell \in [0,1]\}} \bigg),
	\end{equation*}
	which shows that the equation for $\varpi_t$ is in divergence form as in \eqref{eq:kinetic_varpi}.
	
	We now show that $\pi$ is a distribution which can be computed on $C^1$-functions (i.e. it is a first order distribution). Recall that for a $C^1$ function $\bar \phi$
	\begin{equation*}
	\begin{split}
	\bar\phi(v) - \bar \phi(\bar v) &= \bigg( \int_0^1 \nabla \bar \phi \big( (1-l) v_1 + l v_2 \big) dl \bigg) \cdot (v_2 - v_1),
	\end{split}
	\end{equation*}
	so that testing
	\begin{equation*}
	- \pi = \sum_i \sum_{x_{ij}} \sum_{x_n(t_i) = x_{ij}} c_n \bigg( \frac{v_n(t_{i-1}) - v_n(t_i)}{|v_n(t_{i-1}) - v_n(t_i)|} \mathcal H^1 \llcorner_{\{(1-\ell) v_n(t_{i}) + \ell v_n(t_{i-1}),\ell \in [0,1]\}} \bigg)
	\end{equation*}
	with a $C^1$ function $\phi=\phi(t,x,v)$ and using
	\begin{equation}
	\label{eq:distrte_classic_aver_sped}
	v_n(t_i) = \frac{\sum_{x_n(t_i) = x_{ij}} c_n v_n(t_{i-1})}{\sum_{x_n(t_i) = x_{ij}} c_n}.
	\end{equation}
	one obtains
	\begin{equation*}
	\begin{split}
	- \int \phi \pi &= \sum_i \sum_{x_{ij}} \sum_{x_n(t_i) = x_{ij}} c_n \big( v_n(t_{i-1}) - v_n(t_i) \big) \int_0^1 \phi \big( t_i,x_n(t_i),(1 -\ell) v_n(t_i) + \ell v_n(t_{i-1}) \big) d\ell \\
	&= \sum_i \sum_{x_{ij}} \sum_{x_n(t_i) = x_{ij}} c_n \big( v_n(t_{i-1}) - v_n(t_i) \big) \phi(t_i,x_n(t_i),v_n(t_i)) \\
	& \quad + \sum_i \sum_{x_{ij}} \sum_{x_n(t_i) = x_{ij}} c_n \big( v_n(t_{i-1}) - v_n(t_i) \big) \\
	&{ \qquad \qquad \qquad \qquad \bigg[ \bigg( \int_0^1 (1 - \ell) \nabla_v \phi \big( t_i,x_n(t_i),(1 - \ell) v_n(t_i) + \ell v_n(t_{i-1}) \big) d\ell \bigg) \cdot \big( v_n(t_{i-1}) - v_n(t_i) \big) \bigg] }\\
	&= \sum_i \sum_{x_{ij}} \sum_{x_n(t_i) = x_{ij}} c_n \big( v_n(t_{i-1}) - v_n(t_i) \big) \\
	& \qquad \qquad \qquad \qquad \bigg[ \bigg( \int_0^1 (1 - \ell) \nabla_v \phi \big( t_i,x_n(t_i),(1 - \ell) v_n(t_i) + \ell v_n(t_{i-1}) \big) d\ell \bigg) \cdot \big( v_n(t_{i-1}) - v_n(t_i) \big) \bigg].
	\end{split}
	\end{equation*}
	Using the dissipation of energy proved in Proposition \ref{prop:diss} it follows that
	\begin{equation*}
	{\big|} \langle  \phi, \pi\rangle {\big|} \leq \|\nabla_v \phi\|_{C^0} \|W_0\|_{L^2_\eta}^2.
	\end{equation*}
	This proves the first claim about $\pi$, since $\eta \in \mathcal M(\Gamma)$.
	
	Testing $\div_v\pi$ with a function $\Psi(t,x,v)$ convex w.r.t. $v$ we obtain
	\begin{equation*}
	\begin{split}
	\langle\Psi,\div_v\pi\rangle=-
	\sum_i \sum_{x_{ij}} \sum_{x_n(t_i) = x_{ij}} c_n \big[ \Psi(t_i,x_n(t_i),v_n(t_i)) - \Psi(t_{i-1}, x_n(t_{i-1}), v_n(t_{i-1})) \big] \geq 0
	\end{split}
	\end{equation*}
	by Jensen inequality  and \eqref{eq:distrte_classic_aver_sped}. 
	This concludes the proof.
\end{proof}

Now that we have a kinetic formulation one can give the following Eulerian  formulation. Define 
\begin{equation*}
\varpi_t(dxdv) = \int \varpi_{t,x}(dv) \rho_t(dx), \quad \rho_t = (\mathtt p_x)_\sharp \varpi_t,
\end{equation*}
and
\begin{equation*}
u_t(x) = \int v \varpi_{t,x}(dv), \quad w_t(x) = \int (v \otimes v) \varpi_{t,x}(dv),
\end{equation*}
we can observe that \eqref{eq:dissip_varpi} implies that
\begin{equation}
\label{Equa:Euler_equation}
\partial_t \rho_t + \mathrm{div}_x (\rho_t u_t) = 0, \quad \partial_t (\rho_t u_t) + \mathrm{div}_x (\rho_t w_t) = 0{,}
\end{equation}
which is the Eulerian formulation of the sticky particle system. Using instead $\phi(t,x) |v|^2$ we deduce that
\begin{equation*}
 \phi(t,x)\langle  |v|^2, \mathrm{div}_v \pi\rangle = \mu \in \mathcal M^+(\R^+\times \R^d),
\end{equation*}
which embodies the dissipation of energy. However in general we cannot compute the third order moment {$\int |v|^2v \varpi_{t,x}(dv)$} as a Lebesgue integral, so the energy balance cannot be written in terms of $\rho_t$ integrable functions.

\section{A $G_\delta$ dense set of  initial data}
	\label{sec:gdelta}
	
	In this section we prove that there is a $G_\delta$ dense set of initial data for which any sticky particle solution departing from them is a flow in which particles do not interact. In order to make this statement precise, we need to introduce the following definitions.

\begin{definition}[Free flow]
	We say that a dissipative solution $\eta\in\mathcal M(\Gamma)$ is a free flow if $\eta$ is concentrated on a set of straight lines with empty mutual intersection. 
\end{definition}


\begin{definition}[Initial data for sticky particle solutions]
	Let $W_0:\Gamma\to\R^d$ be the continuous map defined by $W_0(\gamma)=\gamma(0)-\gamma(-1)$. We say that a probability measure $\nu_0\in\mathcal P(\R^d\times\R^d)$ is an initial data of a dissipative solution if there exists $\eta\in\mathcal M(\Gamma)$ dissipative solution s.t. $\nu_0=(e_0,W_0)_{\#}\eta$, i.e.
	\[
	\int\phi(x,v)\nu_0(dxdv)=\int\phi(\gamma(0),W_0(\gamma))\eta(d\gamma).
	\]
\end{definition}

Notice that, by compactness of the set of dissipative  solutions and by continuity of the map $W_0$, the set of initial data of dissipative solutions is compact as well. Moreover, since finite convex combinations of Dirac deltas on $\R^d$ pointing in different directions give always rise to a sticky particle solutions, by density it actually coincides with
\[
P_{2,1}(\R^d\times\R^d)=\Bigl\{\nu_0\in\mathcal P(\R^d\times\R^d):\,\int|x|^2{\mathtt{p}_x}_{\#}\nu_0\leq1,\,\int|v|^2{\mathtt{p}_v}_{\#}\nu_0\leq1\Bigr\}.
\]

The main result of this section is Theorem \ref{thm:freeflow0}, which we recall below:

\begin{theorem}\label{thm:freeflow}
	There is a set $D_0\subset\mathcal P_{2,1}(\R^d\times\R^d)$ such that, for any $\nu_0\in D_0$ there exists a unique dissipative  solution $\eta$ with initial data $\nu_0$ and it is given by a free flow. Such a set is a dense $G_{\delta}$ set (i.e. of second category)  in the weak topology on $\mathcal P_{2,1}(\R^d\times\R^d)$. 
\end{theorem}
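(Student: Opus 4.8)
The plan is to exhibit $D_0$ explicitly as the set of data whose free flow has pairwise non--intersecting trajectories, up to a $\nu_0\otimes\nu_0$--null set of pairs, and to read off the three required properties from this description. For $(x,v)\in\R^d\times\R^d$ write $L(x,v)$ for the line $t\mapsto x+tv$ and $\eta_{\mathrm{ff}}=L_\#\nu_0$ for the associated free flow; for $\delta>0$ let
\[
Z_\delta=\big\{\big((x,v),(x',v')\big):|(x,v)-(x',v')|\ge\delta,\ \exists\,t\in[0,T]\ \text{with}\ x+tv=x'+tv'\big\},
\]
and set $D_0=\bigcap_{m\ge1}\{\nu_0:(\nu_0\otimes\nu_0)(Z_{1/m})=0\}$. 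I would prove, in order: (i) for $\nu_0\in D_0$ the free flow is the \emph{unique} dissipative solution; (ii) $D_0$ is $G_\delta$; (iii) $D_0$ is dense. Since $\mathcal P_{2,1}$ is tight (by the second--moment bounds) and weakly closed (lower semicontinuity of the moments), it is compact metrizable, hence Baire, and the three facts give the theorem.

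\textbf{Uniqueness.} The conceptual heart is the claim that $\nu_0\in D_0$ forces every dissipative $\eta$ with $(e_0,W_0)_\#\eta=\nu_0$ to satisfy $D(\eta)=0$. By Proposition \ref{prop:diss} the map $t\mapsto\|V_t\|_{L^2_\eta}^2$ is nonincreasing, and the balance \eqref{eq:squarediss} applied with $s\in(-1,0)$ shows $D(\eta)=0$ is equivalent to $V_t=W_0$ for all $t$, i.e. to $\eta$ being concentrated on straight lines of velocity $W_0$. To get $D(\eta)=0$ I would use a \emph{first time of velocity change}: set $t^\ast=\inf\{t:\|V_t-W_0\|_{L^2_\eta}>0\}$, right continuous by Corollary \ref{cor:BV}. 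For $t<t^\ast$ one has $\dot\gamma(t)=V_t=W_0(\gamma)$, so $\eta$--a.e. curve coincides on $[0,t^\ast)$ with its free--flow line and $\gamma(t^\ast)=\gamma(0)+t^\ast W_0(\gamma)$ by continuity. If $V_{t^\ast}\ne W_0$ on a positive set, the disintegration $\omega^{t^\ast}_{T_{t^\ast}(\gamma)}$ must group curves with distinct $W_0$; such curves agree on $(t^\ast,T)$, hence at $t^\ast$, so their free--flow lines meet at $t^\ast\in[0,T]$ while their velocities differ — this places the corresponding pair (of separation $\ge|W_0(\gamma)-W_0(\gamma')|>0$) in some $Z_{1/m}$. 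As $(\nu_0\otimes\nu_0)(Z_{1/m})=0$ and, up to $t^\ast$, curves are determined by $(\gamma(0),W_0)\sim\nu_0$, this grouping is $\eta$--trivial, so $V_{t^\ast}=W_0$ a.e., contradicting the definition of $t^\ast$ unless $t^\ast=T$. Hence $\eta$ is a straight--line flow, and the $Z_\delta$--condition makes the lines disjoint, so $\eta=\eta_{\mathrm{ff}}$.

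\textbf{The $G_\delta$ property (main obstacle).} Here I expect the real difficulty, and this is precisely the ``upper semicontinuity of the dissipation'' alluded to in the introduction. Each $Z_\delta$ is closed, and $\nu_0\mapsto\nu_0\otimes\nu_0$ is weakly continuous, so by the Portmanteau inequality $\limsup_n(\nu_0^n\otimes\nu_0^n)(Z_{1/m})\le(\nu_0\otimes\nu_0)(Z_{1/m})$; thus each $\nu_0\mapsto(\nu_0\otimes\nu_0)(Z_{1/m})$ is usc and $\{(\nu_0\otimes\nu_0)(Z_{1/m})=0\}=\bigcap_k\{(\nu_0\otimes\nu_0)(Z_{1/m})<1/k\}$ is $G_\delta$, whence so is their countable intersection $D_0$. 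The subtle point that dictates this \emph{mass--weighted, diagonal--excluded} formulation is that the naive maximal dissipation $\nu_0\mapsto\sup_\eta D(\eta)$ is \emph{not} usc: a mass $m_n\to0$ with $|v_n|\to\infty$ and $m_n|v_n|^2\to c$ can collide and dissipate a fixed amount $\sim c$ while escaping to velocity infinity in the weak limit, so the limiting datum ``loses'' that dissipation. Weighting crossings by $\nu_0\otimes\nu_0$ cures this, since such a pair carries mass $\sim m_n\to0$; and the separation threshold $\delta$ is what keeps $Z_\delta$ closed (and excludes the diagonal, so that the dense atomic data below, with $\sum_i c_i^2>0$, still qualify). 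Compactness of the set of dissipative solutions (Theorem \ref{thm:sticky}) is what ties the two pictures together, guaranteeing that the approximate sticky collisions producing $D>0$ converge to genuine dissipative solutions of the limiting data, so that $D_0$ coincides with the set on which no dissipative solution dissipates.

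\textbf{Density and conclusion.} Finally I would use $d\ge2$. Given $\nu_0$ and $\varepsilon>0$, first approximate it weakly by a finitely supported $\nu_0^{\mathrm{fin}}=\sum_i c_i\delta_{(x_i,v_i)}\in\mathcal P_{2,1}$ (rescaling velocities by $1-\delta$ to create room if $\nu_0$ sits on the boundary $\int|v|^2=1$); this is legitimate since finite sticky particle data are dense by Theorem \ref{thm:stickydense}. For a finite family, the condition that $L(x_i,v_i)$ and $L(x_j,v_j)$ meet for some $t\in[0,T]$ is, for each pair, an algebraic condition of codimension $d-1\ge1$ on $(x_i,v_i,x_j,v_j)$, so the configurations in which \emph{some} pair meets form a finite union of positive--codimension sets; an arbitrarily small perturbation of the $(x_i,v_i)$ then yields $\nu_0'$ with pairwise disjoint lines, i.e. $(\nu_0'\otimes\nu_0')(Z_{1/m})=0$ for every $m$, still $\varepsilon$--close to $\nu_0$ and inside $\mathcal P_{2,1}$. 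Thus $D_0$ is dense. Being a dense $G_\delta$ in the Baire space $\mathcal P_{2,1}$, it is of second category, and on it the unique dissipative solution is the free flow, as claimed.
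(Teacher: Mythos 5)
Your reduction to (i) uniqueness, (ii) $G_\delta$, (iii) density is structurally reasonable, and your density and Baire arguments match the paper's, but there is a genuine gap in the uniqueness step which makes your $D_0$ the wrong set: the condition $(\nu_0\otimes\nu_0)(Z_{1/m})=0$ for all $m$ is strictly weaker than what uniqueness requires. The pairs of curves that the disintegration $\omega^{t^\ast}_{T_{t^\ast}(\gamma)}$ groups together are distributed according to the self-coupling $\int \omega^{t^\ast}_{\gamma''}\otimes\omega^{t^\ast}_{\gamma''}\,(T_{t^\ast})_{\#}\eta(d\gamma'')$, whose marginals are $\eta$ but which is in general \emph{not} the product $\eta\otimes\eta$; a coupling can charge a product-null set. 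Concretely, in $\R^2$ take $\nu_0=\int_0^1\bigl(\tfrac12\delta_{((s,0),(0,1))}+\tfrac12\delta_{((s,1),(0,-1))}\bigr)\,ds$ (rescaled so as to lie in $\mathcal P_{2,1}(\R^d\times\R^d)$). The only pairs of crossing free-flow lines are those with equal parameter $s$ and opposite rows, a graph over a one-dimensional set, so $(\nu_0\otimes\nu_0)(Z_{1/m})=0$ for every $m$ even though each such pair lies at distance $\sqrt5$; hence $\nu_0$ belongs to your $D_0$. Yet the sticky particle solution in which each pair collides at $t=1/2$ and continues with the averaged (zero) velocity is a dissipative solution with $D(\eta)>0$, so the free flow is not the unique dissipative solution for this datum. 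The failure occurs exactly at the phrase ``this grouping is $\eta$-trivial'': being $\eta\otimes\eta$-null does not force the conditional measures $\omega^{t^\ast}$ to be Dirac.

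The paper avoids this by defining $D_0$ directly as the set of data for which \emph{every} dissipative solution has $D(\eta)=0$, and, in Lemma \ref{lemma:nodiss}, by quantifying over all \emph{admissible transference plans} $\pi$ with marginals $\leq\nu_0$: if any such plan charges the crossing set $H$, one builds from it a dissipative solution with strictly positive dissipation; hence all plans vanish on $H$, and Kellerer's duality \eqref{Equa:kellerer_1} then covers $H$ by $N_1\times\Gamma\cup\Gamma\times N_2$ with $N_i$ null, which is what actually yields disjoint trajectories. (Your product-measure test amounts to checking only the product plan.) Correspondingly, the $G_\delta$ property is obtained in the paper not from closedness of $Z_\delta$ but from compactness of the set of dissipative solutions (Theorem \ref{thm:sticky}) together with upper semicontinuity of $D$ along converging sequences of solutions, showing that each $D_{1/k}=\{\nu_0:\,D(\eta)<1/k\ \forall\eta\}$ is open; your usc argument for $(\nu_0\otimes\nu_0)(Z_{1/m})$ does not transfer to the plan-wise condition without further work. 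Your density argument (finite atomic data, perturbation of a codimension $d-1\geq1$ crossing set, Lemma \ref{lemma:freeflowfinite}) coincides with the paper's and is fine; it is the characterization of $D_0$ that must be replaced by the dissipation-based (equivalently, plan-wise) condition.
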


Being the map $e_0\times W_0$ continuous, also $(e_0\times W_0)^{-1}(D_0)$ is a $G_\delta$-set. But it cannot be dense in the set of dissipative solutions: just consider a finite sticky particle solution where trajectories do interact. 

In order to prove Theorem \ref{thm:freeflow} we need the following preliminary lemmas.

\begin{lemma}\label{lemma:freeflowfinite}
	If
	\begin{equation*}
	\nu_0(dxdv) = \sum_{n=1}^N c_n \delta_{(x_n,v_n)}(dxdv)
	\end{equation*}
	and the trajectories $x_n + \R^+ v_n$ do not intersect, then there is a unique  $\eta\in\mathcal M(\Gamma)$ dissipative solution s.t. $\nu_0=(e_0,W_0)_{\#}\eta$  given by 
	\begin{equation*}
	\eta(d\gamma) = \sum_{n=1}^N c_n \delta_{\{x_n + t v_n\}}(d\gamma).
	\end{equation*}
\end{lemma}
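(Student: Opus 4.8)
The plan is to dispatch existence by direct verification and to spend the real effort on uniqueness. First I record two consequences of the non-intersection hypothesis. Writing $\gamma_n$ for the affine curve $\gamma_n(t)=x_n+tv_n$, the assumption that the rays $x_n+\R^+v_n$ are pairwise disjoint forces the $x_n$ to be distinct (otherwise two rays already meet at $t=0$) and, more generally, $\gamma_n(t)\neq\gamma_m(t)$ for all $t\in[0,T]$, $n\neq m$; by compactness of $[0,T]$ this same-time separation is bounded below by some $d_0>0$. Since the datum $(e_0,W_0)_\#\eta=\sum_n c_n\delta_{(x_n,v_n)}$ has distinct first marginals $x_n$, any dissipative solution $\eta$ with this datum decomposes as $\eta=\sum_n\eta_n$, where $\eta_n$ is concentrated on $A_n:=\{\gamma:\gamma(0)=x_n\}$, has mass $c_n$, and satisfies $W_0=v_n$ $\eta_n$-a.e.

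For existence I would check that $\eta^{\mathrm{ff}}:=\sum_n c_n\delta_{\gamma_n}$ satisfies \eqref{eq:diss}. Since the $\gamma_n$ are pairwise distinct on $(t,T)$ for every $t<T$ (they differ at any single later time), each $\gamma_n$ lies in its own $T_t$-equivalence class, so $\omega^t_{T_t(\gamma_n)}=\delta_{\gamma_n}$ and hence, by \eqref{eq:sticky_particle_property}, $V_t(\gamma_n)=W_0(\gamma_n)=v_n=\dot\gamma_n(t)$; thus $\eta^{\mathrm{ff}}$ is a dissipative solution with $(e_0,W_0)_\#\eta^{\mathrm{ff}}=\nu_0$ and zero dissipation.

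For uniqueness the crucial elementary remark is that, because $W_0\in\{v_1,\dots,v_N\}$ $\eta$-a.e. and $V_t(\gamma)$ is an average of $W_0$ over $\omega^t_{T_t(\gamma)}$, the velocity stays in the convex hull of the $v_n$, so $|V_t(\gamma)|\le M:=\max_n|v_n|$ for $\mathcal L^1\times\eta$-a.e. $(t,\gamma)$. I first observe $V_0=W_0$: two curves agreeing on $(0,T)$ agree at $0$ by the $W^{1,2}\hookrightarrow C^0$ embedding, hence share $\gamma(0)=x_n$ and thus the same label, so $\omega^0_{T_0(\gamma)}$ is carried by $A_n$ and $V_0(\gamma)=v_n=W_0(\gamma)$. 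Now I run a continuation argument. Let $P(t)$ be the statement ``$V_s=W_0$ for $\mathcal L^1\times\eta$-a.e. $(s,\gamma)\in[0,t]\times\Gamma$'' and set $t^*:=\sup\{t:P(t)\text{ holds}\}$; the $\mathcal L^1$-a.e. formulation makes $P(t^*)$ automatic, and integrating $\dot\gamma=V_s=v_n$ gives, for $\eta$-a.e. $\gamma\in A_n$, that $\gamma(s)=x_n+sv_n$ for all $s\in[0,t^*]$.

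Finally I would show $t^*=T$ by an openness argument, which I expect to be the main obstacle. Suppose $t^*<T$. Using $|V_r|\le M$ with $\gamma(t^*)=x_n+t^*v_n$, for $\eta$-a.e. $\gamma\in A_n$ and all $s\in[t^*,T]$ one gets $|\gamma(s)-(x_n+sv_n)|\le 2M(s-t^*)$. Since $|x_n+sv_n-(x_m+sv_m)|\ge d_0$ for all $s\in[0,T]$, $n\neq m$, choosing $\eps$ with $4M\eps<d_0$ keeps curves of different labels separated by at least $d_0-4M\eps>0$ at each $s\in[t^*,t^*+\eps]$. As any two curves in one $T_s$-class agree at $s$, and the $\eta$-a.e. position estimate transfers to $\omega^s_{T_s(\gamma)}$-a.e. curve, the class of a label-$n$ curve is carried by $A_n$, so $V_s=W_0$ for a.e. $s\in[t^*,t^*+\eps]$. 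This yields $P(t^*+\eps)$, contradicting maximality; hence $t^*=T$, and $\eta$-a.e. $\gamma\in A_n$ coincides with $\gamma_n$ on $[0,T]$ (and on $(-1,0)$ by affinity), so $\eta=\eta^{\mathrm{ff}}$. The delicate points to get right are the transfer of $\eta$-a.e. statements to the disintegrations $\omega^s_{T_s(\gamma)}$ (legitimate since $\eta=\int\omega^s_{\gamma'}\,d(T_s)_\#\eta$) and the uniform velocity bound, both of which rest on Lemma \ref{Lem:dot_gamma_eq_proje} and the right-continuity from Corollary \ref{cor:BV}.
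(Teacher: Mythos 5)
Your proof is correct and follows essentially the same strategy as the paper's: a uniform velocity bound $|V_t|\le\max_n|v_n|$ obtained by averaging $W_0$, a positive lower bound on the same-time separation of the unperturbed trajectories, and the resulting fact that for a short time the $T_t$-partition refines the partition by starting point, so $V_t=W_0$, which is then propagated to all of $[0,T]$ (your continuation argument replacing the paper's explicit iteration over intervals of length $\bar d/2\bar V$). Your write-up is in fact more detailed than the paper's rather terse sketch, including the explicit verification of existence.
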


\begin{lemma}\label{lemma:straight}
	Let $\eta\in\mathcal M(\Gamma)$ be such that $D(\eta)=0$. Then $\eta$ is concentrated on straight lines of the form $\gamma(0)+tW_0(\gamma)$.
\end{lemma}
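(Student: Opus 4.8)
The plan is to show that if the total dissipation $D(\eta)=0$, then the velocity field $V_t$ is constant in time $\eta$-a.e., which forces the trajectories to be straight lines of the form $\gamma(0)+tW_0(\gamma)$. The starting point is the dissipation balance \eqref{eq:squarediss} from Proposition \ref{prop:diss}: for all $s\leq t$ in $(-1,T]$,
\[
\int|V_s(\gamma)-V_t(\gamma)|^2\eta(d\gamma)=\|V_s\|_{L^2_\eta}^2-\|V_t\|_{L^2_\eta}^2.
\]
Since $t\mapsto\|V_t\|_{L^2_\eta}^2$ is nonincreasing, the definition \eqref{eq:dissipation} of $D(\eta)$ as $T\|W_0\|_{L^2_\eta}^2-\int\int_0^T|V_t(\gamma)|^2\,dt\,\eta(d\gamma)$ being zero, together with $\|V_0\|_{L^2_\eta}^2=\|W_0\|_{L^2_\eta}^2\geq\|V_t\|_{L^2_\eta}^2$ for all $t$, should force $\|V_t\|_{L^2_\eta}^2=\|W_0\|_{L^2_\eta}^2$ for $\mathcal L^1$-a.e. $t\in[0,T]$.

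First I would make this rigorous: write $D(\eta)=\int_0^T\big(\|W_0\|_{L^2_\eta}^2-\|V_t\|_{L^2_\eta}^2\big)\,dt$, and observe that the integrand is nonnegative and nondecreasing in $t$ (by monotonicity of $t\mapsto\|V_t\|^2_{L^2_\eta}$). Hence $D(\eta)=0$ implies $\|V_t\|_{L^2_\eta}^2=\|W_0\|_{L^2_\eta}^2$ for every $t\in[0,T]$ where the nonincreasing function attains its initial value, and by monotonicity this holds for all $t\in[0,T)$, with $t=T$ following from right-continuity. Plugging $s=0$ into the balance then gives $\int|W_0(\gamma)-V_t(\gamma)|^2\eta(d\gamma)=\|W_0\|_{L^2_\eta}^2-\|V_t\|_{L^2_\eta}^2=0$, so $V_t(\gamma)=W_0(\gamma)$ for $\eta$-a.e.\ $\gamma$, for each such $t$. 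By Fubini this yields $V_t(\gamma)=W_0(\gamma)$ for $\mathcal L^1\times\eta$-a.e.\ $(t,\gamma)$.

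Next I would invoke the dissipative-solution equation \eqref{eq:diss}, namely $\dot\gamma(t)=V_t(\gamma)$ for $\mathcal L^1\times\eta$-a.e.\ $(t,\gamma)$. Combining with $V_t=W_0$ a.e., we get $\dot\gamma(t)=W_0(\gamma)$ for $\mathcal L^1\times\eta$-a.e.\ $(t,\gamma)$. Since $\eta$ is concentrated on $W^{1,2}((-1,T),\R^d)$, for $\eta$-a.e.\ $\gamma$ the curve is absolutely continuous with a.e.\ derivative equal to the constant vector $W_0(\gamma)$ on $[0,T]$; integrating from $0$ gives $\gamma(t)=\gamma(0)+tW_0(\gamma)$ on $[0,T]$, which is exactly the claimed straight-line form.

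The main obstacle I anticipate is the interchange of the $\mathcal L^1$-a.e.\ and $\eta$-a.e.\ quantifiers and handling the endpoint and right-continuity issues cleanly. One must be careful that the balance \eqref{eq:squarediss} is stated for fixed $s,t$, so deducing $V_t=W_0$ pointwise in $t$ for a full-measure set of $t$, then upgrading to a joint $\mathcal L^1\times\eta$-null exceptional set via Fubini, requires that the relevant maps be jointly measurable; this is guaranteed by the construction of $V_t$ through the disintegrations, but deserves a line. A minor subtlety is that $V_t$ is only defined for $t\in[0,T]$ with the convention $V_t=W_0$ on $(-1,0)$, so the conclusion genuinely concerns the straight-line structure on $[0,T]$, matching the statement; the behavior on $(-1,0)$ is already affine by the definition of $\Gamma$.
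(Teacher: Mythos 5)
Your argument is correct and follows essentially the same route as the paper's proof: zero total dissipation forces $t\mapsto\|V_t\|_{L^2_\eta}^2$ to be constant, whence $V_t=W_0$ in $L^2_\eta$ and the dissipative-solution relation $\dot\gamma(t)=V_t(\gamma)$ integrates to the straight-line form (the paper extracts $V_t=W_0$ from the equality case of Jensen's inequality applied to the disintegration, while you read it off the balance \eqref{eq:squarediss}, which amounts to the same thing). The only point to tidy is that to get $\int|W_0(\gamma)-V_t(\gamma)|^2\eta(d\gamma)=0$ you should take $s\in(-1,0)$ in \eqref{eq:squarediss}, where $V_s=W_0$ by convention, rather than $s=0$ (or first deduce $V_0=W_0$ $\eta$-a.e. in the same way, since a priori $V_0\neq W_0$); the endpoint $t=T$ needs no separate treatment because $\{T\}\times\Gamma$ is $\mathcal L^1\times\eta$-null.
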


Define 

	\begin{equation}\label{eq:D_0}
D_0 = \Big\{ \nu_0\in\mathcal P_{2,1}(\R^d\times\R^d):\, D(\eta)=0\quad\forall\,\eta\in\mathcal M(\Gamma)\text{ dissipative  solutions s.t. }(e_0,W_0)_{\#}\eta=\nu_0 {\Big\}}.
\end{equation}

\begin{lemma}\label{lemma:nodiss}
	If $\nu_0\in D_0$, then $\nu_0$ is concentrated on a graph $(x_0,v_0(x_0))\subset\R^d\times\R^d$ of a map $v_0$ such that the straight lines $[0,T]\ni t\mapsto x_0+tv_0$ do not intersect and there is a unique $\eta\in\mathcal M(\Gamma)$ dissipative solution with $(e_0,W_0)_{\#}\eta=\nu_0$ given by the free flow concentrated on these straight lines.
\end{lemma}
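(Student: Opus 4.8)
The plan is to combine the two preceding lemmas with the compactness of the set of dissipative solutions. First I would fix $\nu_0 \in D_0$ and let $\eta \in \mathcal M(\Gamma)$ be any dissipative solution with $(e_0,W_0)_{\#}\eta = \nu_0$; at least one such $\eta$ exists because $\nu_0 \in \mathcal P_{2,1}(\R^d\times\R^d)$ coincides with the set of initial data of dissipative solutions. By the very definition \eqref{eq:D_0} of $D_0$, every such $\eta$ satisfies $D(\eta)=0$, so Lemma \ref{lemma:straight} applies and gives that $\eta$ is concentrated on straight lines of the form $t \mapsto \gamma(0)+tW_0(\gamma)$. In particular $V_t(\gamma)=W_0(\gamma)$ for $\eta$-a.e.\ $\gamma$ and all $t$, so the curve $\gamma$ is completely determined by the pair $(\gamma(0),W_0(\gamma)) = (e_0(\gamma),W_0(\gamma))$.

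Next I would push the structure back to the initial data. Since $\eta$ is concentrated on the graph $\gamma \mapsto (\gamma(0)+tW_0(\gamma))_t$, the map $\gamma \mapsto (e_0(\gamma),W_0(\gamma))$ is essentially the coordinate that parametrizes the support, and $\nu_0 = (e_0,W_0)_{\#}\eta$ is its image. The claim that $\nu_0$ is concentrated on a graph $(x_0,v_0(x_0))$ of a map $v_0$ is exactly the statement that two distinct velocities cannot be attached to the same position $x_0$. This is where I expect the main work to lie: I must rule out that $\nu_0$ charges two points $(x_0,v)$ and $(x_0,v')$ with $v\neq v'$. The key idea is that if this happened, the two straight lines issuing from the common point $x_0$ would both pass through $x_0$ at time $t=0$, hence would \emph{intersect}; a genuine sticky particle (or dissipative finite particle) solution could then be built in which these two rays collide and merge, producing a solution with \emph{strictly positive} dissipation and the same initial data $\nu_0$. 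This contradicts $\nu_0 \in D_0$. Making this rigorous requires exhibiting, from the hypothetical branching, a dissipative solution $\eta'$ with $(e_0,W_0)_{\#}\eta'=\nu_0$ and $D(\eta')>0$: one takes the free flow away from the offending point and replaces the two colliding rays by a single merged trajectory carrying the momentum-averaged velocity, which dissipates energy by the strict Jensen inequality. The non-intersection of the remaining lines $t\mapsto x_0+tv_0(x_0)$ follows by the same dichotomy: any genuine crossing would again permit a positive-dissipation competitor.

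Having established that $\nu_0$ lives on the graph of a map $v_0$ with non-intersecting rays, uniqueness and the free-flow description follow from Lemma \ref{lemma:freeflowfinite} in spirit, extended from finitely many atoms to the general measure $\nu_0$. Concretely, any dissipative $\eta$ with this initial data has $D(\eta)=0$ and hence, by Lemma \ref{lemma:straight}, is concentrated on the straight lines $t\mapsto \gamma(0)+tW_0(\gamma)$; since these lines are determined by $(\gamma(0),W_0(\gamma))$ and this pair is distributed according to $\nu_0$ supported on the graph of $v_0$, the measure $\eta$ is forced to be the pushforward of $\nu_0$ under the map $(x_0,v_0(x_0)) \mapsto (t\mapsto x_0+tv_0(x_0))$. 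This pins down $\eta$ uniquely and identifies it as the free flow concentrated on the disjoint lines. The main obstacle, as noted, is the branching-to-dissipation argument: turning the geometric picture of two rays sharing an initial point into an explicit competitor solution with $D(\eta')>0$, which is what forces the graph structure in the first place.
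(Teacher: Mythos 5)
Your overall strategy is the paper's: show that any crossing of trajectories would allow one to build a competitor dissipative solution with the same initial datum and strictly positive dissipation (merge the two branches after the first crossing time into the line carrying the averaged velocity, and invoke strict Jensen), contradicting $\nu_0\in D_0$; then deduce that the trajectories are disjoint lines and that the free flow is the unique solution. The first and last steps of your argument (reduction to straight lines via Lemma \ref{lemma:straight}, and uniqueness once the graph/non-intersection structure is known, since a line is determined by $(\gamma(0),W_0(\gamma))$) match the paper and are fine.

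The genuine gap is in the middle step, and it sits exactly where you flag ``the main obstacle''. Your dichotomy is phrased for atoms: ``$\nu_0$ charges two points $(x_0,v)$ and $(x_0,v')$''. For a general, possibly diffuse $\nu_0$ this is vacuous --- no single pair of trajectories need carry positive mass --- and yet the crossing set
\begin{equation*}
H=\Big\{(\gamma,\gamma'):\ \gamma\neq\gamma',\ \exists\, t\in[0,T]\ \text{s.t.}\ \gamma(t)=\gamma'(t)\Big\}
\end{equation*}
could still be charged in a diffuse way. The paper quantifies the ``amount of crossing'' by admissible symmetric transference plans $\pi$ with both marginals bounded by $\eta$ and concentrated on $H$: from any such $\pi$ with $\|\pi\|>0$ it builds, via the disintegration $\pi=\int\pi_\gamma\,\eta(d\gamma)$, a competitor $\tilde\eta$ (formula \eqref{Equa:tilde_eta_def}) with $(e_0,W_0)_{\#}\tilde\eta=\nu_0$ and $D(\tilde\eta)>0$, hence every admissible plan gives $\pi(H)=0$. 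The second half of the gap is converting ``$\pi(H)=0$ for all admissible $\pi$'' into ``the trajectories are pairwise disjoint off an $\eta$-null set'': this is not automatic and is done in the paper by Kellerer's duality theorem \eqref{Equa:kellerer_1}, which yields null sets $N_1,N_2$ with $H\subset N_1\times\Gamma\cup\Gamma\times N_2$, so that removing $N_1\cup N_2$ leaves pairwise disjoint lines; this simultaneously gives the graph property of $\nu_0$ (two rays from the same $x_0$ cross at $t=0$) and the non-intersection of the rays. Without the transference-plan formulation and the duality step, your argument only excludes crossings between atoms, which does not suffice for general initial data.
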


\begin{proof}[Proof of Theorem \ref{thm:freeflow}]
	
	Thanks to Lemma \ref{lemma:nodiss}, it is sufficient to prove that the set $D_0$ defined in \eqref{eq:D_0} is a dense  $G_\delta$ set  in the weak topology.

	Let us first prove that $D_0$ is dense in $\mathcal P_{2,1}(\R^d\times\R^d)$. 
	
	Consider then any initial data $\nu_0=(e_0,W_0)_{\#}\eta\in\mathcal P_{2,1}(\R^d\times\R^d)$. Applying first Proposition \ref{prop:discretesticky}, then Proposition \ref{prop:countsticky} and finally Proposition \ref{prop:finitesticky} we find a sequence of finite sticky particle solutions $\eta^\sigma$ weakly converging to $\eta$ in $\mathcal M(\Gamma)$. By continuity of $W_0$, the measures $\nu_0^\sigma=(e_0,W_0)_{\#}\eta^\sigma$ converge weakly to $\nu_0$. 
	
	Since $\eta^\sigma$ is a {sticky} finite particle solution and the dimension $d$ is greater or equal than $2$, it is not difficult to see that it is possible to modify slightly the initial datum $\nu_0^\sigma$ to obtain an initial datum $\bar\nu_0^\sigma=(e_0,W_0)_{\#}\bar\eta^\sigma$ in $D_0$. Indeed, let

	\begin{equation*}
	\nu_0^\sigma = \sum_n  \eta^\sigma(\{\gamma_n\}) \delta_{(\gamma_n(0),W_0(\gamma_n))}(dxdv), 
	\end{equation*}
	be the initial datum of a finite sticky particle solution.
     The set of initial speeds $\tilde W_0$ for which
	\begin{equation*}
	\exists n \not= m \text{ s.t. } \dist \Big( \big\{\gamma_n(0) + t W_0(\gamma_n),\, t\in [-1,T]\big\}, \big\{\gamma_m(0) + t W_0(\gamma_m),\, t\in[-1,T]\big\} \Big) = 0
	\end{equation*}
	is {contained in} the set 
	\begin{equation*}
	\exists n \not= m \text{ s.t. } \gamma_n(0) - \gamma_m(0) \parallel W_0(\gamma_n) - W_0(\gamma_m),
	\end{equation*}
	which is closed and has codimension $d-1$. Therefore any initial datum $\nu_0$ of a finite sticky particle solution with $W_0$ belonging to the open and dense complement of the above set has a free flow solution with a finite number of trajectories at a strictly positive mutual distance. Now we apply Lemma \ref{lemma:freeflowfinite} to obtain that $\bar{\nu}_0^\sigma$ generates a unique dissipative solution 
	given by a free flow, namely $\bar{\nu}_0^\sigma\in D_0$.

	Having proved the density of $D_0$, let us prove it is given by the intersection of countably many open sets.
	
	Define, for all $k\in\N$, the sets
	\begin{equation}
	D_{1/k} = {\Big\{} \nu_0\in\mathcal P_{2,1}(\R^d\times\R^d):\, D(\eta)<1/k\quad\forall\,\eta\in\mathcal M(\Gamma)\text{ dissipative solutions s.t. }{(e_0,W_0)}_{\#}\eta=\nu_0 {\Big\}}.
	\end{equation}
	
	It is clear that 
	\[
	D_0=\bigcap_{k}D_{1/k}.
	\]
	
	We claim that the sets $D_{1/k}$ are open sets. Indeed, if this were not the case, there would be $\nu_0\in D_{1/k}$ and a sequence of initial data $\nu_0^n\rightharpoonup\nu_0$ each generating a dissipative solution  $\eta^n$ such that $D(\eta^n)\geq1/k$. By compactness of the set of dissipative solutions, up to extracting a subsequence and relabelling it the measures $\eta^n$ converge to a dissipative solution $\eta$ with $(e_0,W_0)_{\#}\eta=\nu_0$. By definition, it is not difficult to check that the total dissipation is upper semicontinuous w.r.t. weak converge of measures: indeed the measure 
	\[
	\int\phi(t,x,v)\varpi_n(dtdxdv)=\int{\biggl[}\int\phi(t,\gamma(t),\dot{\gamma}(t)){\biggr]}\eta^n(d\gamma)
	\]
	converges weakly for all $\phi$ continuous, i.e. $W_0$, and being $|v|^2$ convex
	\[
	\int\int_0^T|V_t(\gamma)|^2dt\eta(d\gamma)=\int|v|^2{\varpi}\leq\liminf_n\int|v|^2\varpi_n=\liminf_n\int\int_0^T|V_t(\gamma)|^2dt\eta(d\gamma).
	\]
	 Therefore one should have that $D(\eta)\geq1/k$, contrary to the assumption on the initial data $\nu_0$.    
\end{proof}

Let us now prove the series of preliminary lemmas.

\begin{proof}[Proof of Lemma \ref{lemma:freeflowfinite}]
		First of all, by Proposition \ref{prop:diss}, it follows that $\eta$ is concentrated on the set of trajectories $\gamma$ such that
		\begin{equation*}
		\|\dot \gamma\|_\infty \leq \max_n |v_n| = \bar V.
		\end{equation*}
		Hence if $\bar d$ is the minimal distance among the sets $\{x_n + \R^+ v_n\}$, a trajectory $\gamma$ starting in $x_n$ needs a time of order $\bar t = \bar d/2\bar V$ before interacting with a trajectory $\gamma'$ starting in $x_m \not= x_n$. In the interval of time $[0,\bar t]$ the partition $\gamma \mapsto \gamma(0)$ is then the least sharp partition allowed, and being the initial one one concludes that $\dot \gamma(t) = \dot \gamma(0)$. The statement is obtained by repeating the argument for every interval of time $[n,n+1] \bar t$.
\end{proof}

\begin{proof}[Proof of Lemma \ref{lemma:straight}:]
If $D(\eta)=0$, then the map $(-1,T]\ni t\mapsto\int|V_t(\gamma)|^2\eta(d\gamma)$ is constant, where we have used the usual convention $V_t=W_0$ if $t\in(-1,0)$.
Recalling the definition of $V_t$, this implies that
\[
\int|W_0(\gamma')|^2\omega_{T_t(\gamma)}(d\gamma')= {\biggl|} \int W_0(\gamma')\omega_{T_t(\gamma)}(d\gamma') {\biggr|}^2
\]	
that in turn by Jensen inequality implies that $W_0$ is constant on $T_t^{-1}(T_t(\gamma))$ for all $\gamma$ and for all $t$. Hence,
\[
\dot{\gamma}(t)=V_t(\gamma)=W_0(\gamma) \quad\mathcal L^1\times \eta\text{{-}a.e. in $(-1,T]\times\Gamma$},
\] 	
namely $\eta$ is concentrated on straight lines with velocity $W_0(\gamma)$.
\end{proof}

\begin{proof}[Proof of Lemma \ref{lemma:nodiss}:]
	
	Let $\eta\in\mathcal M(\Gamma)$ dissipative solution with $(e_,W_0)_{\#}\eta=\nu_0\in D_0$. By Lemma \ref{lemma:straight} we know that $\eta$ is concentrated on a set $\Delta\subset\Gamma$ of straight lines.
	
	From now on, we identify straight lines  $\gamma(t)=x_0+v_0t\in\Gamma$ with points $(x_0,v_0)\in\R^d\times\R^d$ according to our convenience.
	
	Define the set
	\begin{equation*}
	H = \Big\{ \big( x_0,v_0,x_0',v_0' \big) \in \R^{2d} \times \R^{2d} : (x_0,v_0) \not= (x_0',v_0') \ \text{and} \ \exists t \in [0,T] \text{ s.t. } x_0 + v_0 t = x_0' + v_0' t \Big\}.
	\end{equation*}
	
	We will use also the notation
	\begin{equation*}
	\gamma(t) = x_0 + v_0 t, \quad \gamma'(t) = x_0' + v_0' t,
	\end{equation*}
	and in some cases consider $H$ as a subset of $\Gamma \times \Gamma$. Let $t_{\gamma,\gamma'}$ be the first crossing time in the definition of $H$.
	
	For every $(x_0,v_0,x_0',v_0') \in H$ consider the map
	\begin{equation}
	\label{Equa:gamma_gamma_prime_map}
	\left. \begin{array}{c}
	\gamma(t) = x_0 + v_0 t, \\
	\gamma'(t) = x_0' + v_0' t,
	\end{array} \right\}
	\quad \mapsto \quad
	\begin{cases}
	\tilde \gamma_{\gamma'}(t) = \gamma(t) \chi_{t \leq t_{\gamma,\gamma'}} + \frac{\gamma(t) + \gamma'(t)}{2} \chi_{t > t_{\gamma,\gamma'}}, \\
	\tilde \gamma'_{\gamma}(t) = \gamma'(t) \chi_{t \leq t_{\gamma,\gamma'}} + \frac{\gamma(t) + \gamma'(t)}{2} \chi_{t > t_{\gamma,\gamma'}}.
	\end{cases}
	\end{equation}
	The effect of the map is to replace the curves $\gamma,\gamma'$ at the first crossing point with the line with their average speed.
	
	
	We will use the duality results of \cite[Proposition 3.3]{kel:duality}: if $B$ is Borel (or analytic)
	\begin{equation}
	\label{Equa:kellerer_1}
	\sup_{\pi \in \mathrm{adm}(\mu_1,\mu_2)} \pi(B) = \inf \bigg\{ \mu_1(B_1) + \mu_2(B_2):\, B_1 \times X \cup X \times B_2 \supset B \bigg\}.
	\end{equation}
	We have used the notation
	\begin{equation*}
	\mathrm{adm}(\mu_1,\mu_2) = \Big\{ \pi \in \mathcal M^+(X \times Y):\, (\mathtt p_x)_\sharp \pi \leq \mu_1, (\mathtt p_y)_\sharp \pi \leq \nu_2 \Big\}.
	\end{equation*}
	
	Consider an admissible transference plan $\pi$ concentrated in $H\cap \Delta\times\Delta$: the plan is said to be admissible if
		\begin{equation*}
		(\mathtt p_{x_0,v_0})_\sharp \pi \leq \nu_0, \quad (\mathtt p_{x_0',v_0'})_\sharp \pi \leq \nu_0.
		\end{equation*}
		Being $H$ symmetric, we can assume that $\pi$ is also symmetric, i.e. $\pi(A \times B) = \pi(B \times A)$. Define the disintegration
		\begin{align*}
		\pi(d\gamma d\gamma')& = \int \pi_\gamma(d\gamma')\eta_{\llcorner\mathtt p_{x_0,v_0}(H\cap \Delta\times\Delta)}(d\gamma) \notag\\
		&=\int \pi_\gamma(d\gamma')\eta(d\gamma),
		\end{align*}
		where we set $\pi_{\gamma}=0$ if $\gamma\notin\mathtt p_{x_0,v_0}(H\cap \Delta\times\Delta)$.

		Define the new Lagrangian representation $\tilde \eta$ as follows: 
		\begin{equation}
		\label{Equa:tilde_eta_def}
		\int \phi(\gamma) \tilde \eta(d\gamma) = \int \phi(\gamma) {(1 - \|\pi_\gamma\|)} \eta(d\gamma) + \int \bigg[ \int \phi(\tilde \gamma'_{\gamma}) \pi_{\gamma}(d\gamma') \bigg] \eta(d\gamma). 
		\end{equation}
		The meaning of $\tilde \eta$ is that part of the curve $\gamma$ is replaced with the curve $\tilde \gamma'_{\gamma}$ constructed in \eqref{Equa:gamma_gamma_prime_map} with weight according to $\pi_{\gamma}(d\gamma')$.  $\tilde{\eta}$ is clearly a dissipative solution.
		
	Notice that $(e_0,W_0)_{\#}\tilde{\eta}=\nu_0$.

			Indeed, since
			\begin{equation*}
			\tilde \gamma'_\gamma(0) = \gamma(0), \quad \tilde \gamma'_\gamma(-1) = \gamma(-1), 
			\end{equation*}
			the claim is proved.
	
	Now let us compute the dissipation for $\tilde{\eta}$. One has that
	
	\begin{align}
	\int|W_0(\gamma)|^2\tilde{\eta}(d\gamma)-\int|\dot{\gamma}(t)|^2\tilde{\eta}(d\gamma)=\int|W_0(\gamma)|^2{\|\pi_{\gamma}\|}\eta(d\gamma)-\int{\biggl[}\int|\dot{\tilde{\gamma}}'_{\gamma}(t)|^2\pi_{\gamma}(\gamma'){\biggr]}\eta(d\gamma).
	\end{align}
	
	Since 
	\[
	|\dot{\tilde{\gamma}}'_{\gamma}(t)|^2=\frac{|W_0(\gamma)+W_0(\gamma)'|^2}{4}<\frac{|W_0(\gamma)|^2}{2}+\frac{|W_0(\gamma')|^2}{2}\quad \forall\,t\geq t_{\gamma,\gamma'},
	\]
	
	one has that $D(\tilde{\eta})>0$.
	
	In particular, under the assumptions of the lemma, we conclude that $H$ is negligible for all admissible plans $\pi$. Thus by \eqref{Equa:kellerer_1} there are two sets $N_1,N_2 \subset \Delta$ such that $H \subset N_1 \times \Gamma \cup N_2 \times \Gamma$. Removing $N_1 \cup N_2$ from $\Delta$ we obtain that the remaining trajectories are disjoint.
\end{proof}

\begin{remark}
	\label{Rem:converse_not_intersecting_not_true}
	The fact that there exists a dissipative solution $\eta$ with $D(\eta)=0$ does not imply that $(e_0,W_0)_{\#}\eta\in D_0$. Indeed, the Example 3 of Bressan and Nguyen \cite{BN} consists in constructing a sequence of particles whose mass is decreasing such that the intersection with of the $i$-th and $i+1$-th occurs only if the intersection of the $i+1$-th with $i+2$-th occurs. In this example $\eta$ is concentrated on a set of trajectories which if prolonged have empty pairwise intersection, but since the perturbation needed in order to make them to intersect becomes negligible also the stricly dissipating solution is a solution.
\end{remark}

Thanks to the above Propositions, it follows that for a dense $G_\delta$ set of initial data $\varpi_t$ is a measure concentrated on a graph, i.e. $\varpi_t = (u_t)_\sharp \rho_t$, with $u_t \in \L^2(\rho_t)$, and the distribution  $\pi = 0$ up to divergence free distributions. Moreover, it is fairly easy to see that for every curve $\gamma(t) = x_0 + v_0 t$ one has
\begin{equation*}
\int \phi(t,\gamma(t)) dt \leq \|\phi\|_{C^0} \frac{2 \diam(\supp(\phi))}{1 + |v_0|},  
\end{equation*}
so that
\begin{equation*}
\begin{split}
\int \phi(t,x) |u_t|^3 dxdt &= \int \bigg[ \int \phi(t,\gamma(t)) dt \bigg] \eta(d\gamma) \\
&\leq 2 \|\phi\|_{C_0} \diam(\supp(\phi)) \int \frac{|W_0(\gamma)|^3}{1 + |W_0(\gamma)|} \eta(d\gamma) < \infty.
\end{split}
\end{equation*}
Thus we can compute also the third moment of $\varpi_t$ and it belongs to $L^3_\loc(\R^+\times\R^d) $, resulting into the complete pressureless Euler system (without Young measures)
\begin{equation*}
\partial_t \rho_t + \mathrm{div}_x (\rho_t u_t) = 0, \quad \partial_t (\rho_t u_t) + \mathrm{div}_x(\rho_t u_t \times u_t) = 0, \quad \partial_t (\rho |u_t|^2) + \mathrm{div}_x (\rho_t |u_t|^2 u_t) = 0.
\end{equation*}

\begin{remark}
\label{Rem:other_G_delta}
As a final observation, we note that, for a generic approximation scheme, the only requirements for the existence of a dense $G_\delta$-set as in Theorem \ref{thm:freeflow} are that 
\begin{enumerate}
\item for a dense set of initial data (e.g. finitely may $\delta$'s as in Lemma \ref{lemma:freeflowfinite}) the only solution is the free flow;
\item the scheme dissipates a convex entropy (e.g. $D(\eta)$ is our case), l.s.c. w.r.t. to convergence of the initial data;
\item if the dissipation is $0$, the only solution is the free flow.
\end{enumerate}
We observe the intersection of dense $G_\delta$-sets is a dense $G_\delta$ set, so that we scan say that up to a set of first category the solution contructed by this scheme coincide with the dissipative solution of Definition \ref{def:sticky}.
\end{remark}

	\end{document}